\tikzset{>=latex}
\numberwithin{equation}{section}
\theoremstyle{plain}
	\newtheorem{thm}[equation]{Theorem}
	\newtheorem{prop}[equation]{Proposition}
	\newtheorem{lem}[equation]{Lemma}
        \newtheorem{conj}[equation]{Conjecture}
	\newtheorem{cor}[equation]{Corollary}
	\newtheorem{lem/defn}[equation]{Lemma/Definition}
\theoremstyle{definition}
	\newtheorem{defn}[equation]{Definition}
	\newtheorem{ex}[equation]{Example}
	\newtheorem{properties}[equation]{Properties}
	\newtheorem{notation}[equation]{Notation}
\theoremstyle{remark}
	\newtheorem{rem}[equation]{Remark}
\def\ov#1{\overline{#1}}
\def\mf{\on{mf}}        
\def\l{\lambda}
\def\and{ \text{ and }}
\def\nc{\newcommand}
\def\on{\operatorname}
\def\p{{\tfrac{n}{2} + 1}}
\nc{\novertwo}{{\tfrac{n}{2}}}
\nc{\edit}[1]{\marginpar{\footnotesize{#1}}}
\nc{\bone}{{\mathbf 1}}
\nc{\C}{\mathbb{C}}
\nc{\Q}{\mathbb{Q}}
\nc{\Z}{\mathbb{Z}}
\nc{\PP}{\mathbb{P}}
\nc{\R}{\mathbb{R}}
\nc{\LL}{\mathbb{L}}
\nc{\OO}{\mathcal{O}}
\nc{\X}{\EuScript{X}}
\nc{\cC}{\EuScript{C}}
\nc{\cA}{\EuScript{A}}
\nc{\sZ}{\EuScript{Z}}
\nc{\id}{{\on{id}}}
\nc\Hom{{\on{Hom}}}
\nc\cone{{\on{cone}}}
\nc\Ob{{\on{Ob}}}
\nc\Spec{{\on{Spec}}}
\nc\Mod{{\on{Mod}}}
\nc\Perf{{\on{Perf}}}
\nc\End{{\on{End}}}
\nc{\into}{\hookrightarrow}
\nc{\tr}{\on{tr}}
\nc{\ev}{\on{ev}}
\nc{\im}{\on{im}}
\nc{\Mot}{\on{Mot}}
\nc{\pt}{\on{pt}}
\nc{\coker}{\on{coker}}
\nc{\rk}{\on{rank}}
\nc{\TOP}{\on{Top}_{\mathbb{C}}^{s}}
\nc{\gr}{\on{gr}}
\nc{\Catperf}{\text{Cat}^{\text{perf}}}
\nc{\Sym}{\on{Sym}}
\nc{\xra}{\xrightarrow}
\nc{\xora}{\xtwoheadrightarrow}
\nc{\lra}{\xleftarrow}
\nc{\Bet}{\mathbf{Betti}_{X}}
\nc{\codim}{\on{codim}}
\nc{\Fred}{\on{Fred}}
\nc{\colim}{\on{colim}}
\nc{\KK}{{\bf K}}
\nc{\Sp}{\on{Sp}}
\nc{\onto}{\twoheadrightarrow}
\nc{\A}{\mathbb{A}}
\nc{\Aff}{\on{Aff}}
\nc{\SH}{\on{SH}}
\nc{\QCoh}{\on{QCoh}}
\nc{\Alg}{\on{Alg}}
\nc{\Br}{\on{Br}}
\nc{\ta}{\widetilde{\a}}
\nc{\Shv}{\on{Shv}}
\nc{\GG}{\mathbb{G}}
\nc{\red}{\color{red}}
\nc{\an}{\on{an}}
\nc{\D}{\on{D}}
\nc{\qc}{\on{qc}}
\nc{\op}{\on{op}}
\nc{\shEnd}{{\mathcal End}}
\nc{\Sph}{\mathbb{S}}
\nc{\Top}{\on{Top}}
\nc{\alg}{\on{alg}}
\nc{\Map}{\on{Map}}
\nc{\Vect}{\on{Vect}}
\nc{\holim}{\on{holim}}
\nc{\GL}{\on{GL}}
\nc{\can}{\on{can}}
\nc{\deRham}{{\on{deRham}}}
\nc{\deR}{{\on{deR}}}
\nc{\polar}{\on{polar}}
\newcommand\blfootnote[1]{%
  \begingroup
  \renewcommand\thefootnote{}\footnote{#1}%
  \addtocounter{footnote}{-1}%
  \endgroup
}
\def\cF{\mathcal{F}}
\def\top{\on{top}}
\def\a{\alpha}
\def\th{\on{th}}
\def\Perf{\on{Perf}}
\def\Sp{\on{Sp}}
\def\G{\Gamma}
\def\b{\beta}
\def\QCoh{\on{QCoh}}
\def\bu{\bullet}
\def\an{\on{an}}
\def\nc{\on{nc}}
\def\ch{\on{ch}}
\def\o{\omega}
\def\e{\varepsilon}
\def\fm{\mathfrak{m}}
\def\Proj{\on{Proj}}
\def\del{\partial}
\def\ch{\on{ch}}
\def\dR{\on{dR}}
\def\Hdg{\on{Hdg}}
\def\HN{HN}
\def\HP{HP}
\def\gr{\on{gr}}
\def\pHS{\on{pHS}}
\def\Db{\on{D}^{\on{b}}}
\def\Dsingdg{\on{D}^{\on{sg}}}
\def\prim{{\on{prim}}}
\def\xla{\xleftarrow}
\def\DbZ{\D^{\on{b}, Z}}
\def\nc{\on{nc}}
\def\top{\on{top}}
\def\even{{\mathrm{even}}}
\def\odd{{\mathrm{odd}}}
\def\N{\mathbb{N}}
\def\co{\colon}
\def\ce{\coloneqq}
\def\bA{\mathbb{A}}
\def\e{\varepsilon}
\def\MR#1{}
\def\BM{\on{BM}}
\subjclass[2020]{13D03, 13D09, 14C30, 14F08, 14J70, 19D55}
\title[Hodge structure on the singularity category]{The Hodge structure on the singularity category of a complex hypersurface}
\author{Michael K. Brown}
\author{Mark E. Walker}
\date{}
\begin{document}
\vspace{18mm} \setcounter{page}{1} \thispagestyle{empty}
\begin{abstract}
Given a complex affine hypersurface with isolated singularity determined by a homogeneous polynomial, we identify the noncommutative Hodge structure on the periodic cyclic homology of its singularity category with the classical Hodge structure on the primitive cohomology of the associated projective hypersurface. As a consequence, we show that the Hodge conjecture for the projective hypersurface is equivalent to a dg-categorical analogue of the Hodge conjecture for the singularity category.
\end{abstract}
\maketitle

\blfootnote{Brown was partially supported by NSF grant DMS-2302373 and Walker by NSF grant DMS-2200732.
Walker was also supported by NSF grant DMS-1928930 and by the Alfred P. Sloan Foundation under grant G-2021-16778, while 
he was in residence at the Simons Laufer Mathematical Sciences Institute during the Spring 2024 semester.}


\section{Introduction}
Katzarkov-Kontsevich-Pantev conjecture in~\cite{KKP} that the periodic cyclic homology of any smooth and proper $\C$-linear differential $\Z$-graded
category $\cC$ may be equipped with a ``noncommutative (nc) Hodge structure",
generalizing the pure Hodge structure on the cohomology of a smooth and proper complex variety. More precisely, the proposed nc Hodge structure on the $0^{\th}$ periodic cyclic homology of $\cC$, denoted $HP_0(\cC)$, is given by analogues of the Hodge filtration and rational structure on the cohomology of a smooth and proper complex variety: the former is the filtration of $HP_0(\cC)$ arising from the negative cyclic homology of~$\cC$,
and the latter is the image of the rationalized
topological Chern character map $ K_0^{\top}(\cC)_\Q \to HP_0(\cC)$.
The statement of the classical Hodge Conjecture generalizes to
dg-categories equipped with an nc Hodge structure: we say such a dg-category $\cC$ satisfies the ``nc Hodge condition''
provided the image of the rationalized algebraic Chern character map $K_0^{\alg}(\cC)_\Q \to HP_0(\cC)$
coincides with the space of Hodge classes.  
We refer the reader to Section \ref{hodgedg} for more details (see also~\cite[Section 5.2]{perry}). It is known that the Hodge conjecture holds for a smooth projective complex variety $Y$ if and only if the Hodge condition holds for the dg-enhancement of its derived category $\Db(Y)$; see Example~\ref{ex:classichodge} or \cite{lin}.

The past two decades have seen a flurry of work focused on developing the Hodge theory of singularity categories of hypersurfaces
(i.e. matrix factorization categories), see e.g.~\cite{BD, BFK1, BFK2, BP, BRTV, BW, BW1,BW2, CT, dyckerhoff, efimov2,  HLP, KP, kim1,pippi, PV,  segal, shk2, shklyarov1}.
In this paper, we show that the singularity category of a complex hypersurface with isolated singularity determined by a
homogeneous polynomial may be equipped with an nc Hodge structure, and we describe it in terms of
invariants arising in classical Hodge theory. More specifically,
our main goal is to prove Theorem~\ref{introthm} below (see Theorem \ref{rephrase} for a more precise statement).
Before stating it, we fix some notation that will be used throughout the paper:

\begin{notation} \label{intronotation}
 Let $f \in \C[x_0, \dots, x_{n+1}]$ be a nonzero homogeneous polynomial, $R$ the associated affine hypersurface $\C[x_0, \dots, x_{n+1}]/(f)$ of dimension $n+1$, and $\fm \coloneqq (x_0, \dots, x_{n+1})$ its homogeneous maximal ideal. Set
  $X \coloneqq \Proj(R) \subseteq \PP^{n+1}$, a projective hypersurface of dimension $n$. We assume $n$ is even; otherwise, $X$ has no interesting Hodge theory.
  We assume also that $R$ has an isolated singularity, i.e. that $X$ is smooth.
  Let $\Dsingdg(R)$ 
  denote a dg-enhancement of the singularity category of $R$ (we specify in Section~\ref{backgroundsection} which dg-enhancement we use). 
  We write $H^n_{\prim}(X)$ for
  the $n^{\th}$ primitive cohomology of $X$ with rational coefficients.
  That is, for $n \ge 2$, 
  $H^n_{\prim}(X) = \ker(H^n(X; \Q) \xra{L} H^{n+2}(X; \Q))$, where  $L$ is the Lefschetz operator; and for $n = 0$,
  $H^0_{\prim}(X) \coloneqq \widetilde{H}^0(X; \Q)$,  the $0^{\th}$ reduced rational cohomology group of $X$.
  Since $X$ is a smooth hypersurface, $H^n_{\prim}(X)$ may be identified with $\coker(H^n(\PP^{n+1}; \Q) \to H^n(X; \Q))$; see Example~\ref{primitive} for more details. 
  
\end{notation}

\begin{thm}
\label{introthm}
There is an isomorphism 
$$
HP_0(\Dsingdg(R)) \cong H^{n}_{\on{prim}}(X; \C)
$$
that identifies the nomcommutative Hodge structure associated to $\Dsingdg(R)$  with the pure Hodge structure on $H^{n}_{\on{prim}}(X)$.
This isomorphism is compatible with the Chern character maps from topological $K$-theory, and hence the classical Hodge conjecture holds for $X$ if and only if
$\Dsingdg(R)$ satisfies the nc Hodge condition (Definition~\ref{hodgeconditiondef}).
\end{thm}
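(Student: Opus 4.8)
The plan is to build a chain of identifications relating $HP_0(\Dsingdg(R))$ to classical Hodge-theoretic invariants, and then to track Chern characters through that chain. The starting point should be the theorem of Dyckerhoff (and its refinements by Preygel, Efimov, etc.) computing the Hochschild and periodic cyclic homology of matrix factorization categories in terms of the Milnor fiber, or rather — since $f$ is homogeneous — in terms of the graded pieces of the Jacobian ring and the associated mixed Hodge structure on the Milnor fiber cohomology. Concretely, I would first establish a natural isomorphism $HP_0(\Dsingdg(R)) \cong \bigoplus_{p} H^{n-2p}_{\prim}(X;\C)$ of vector spaces, using the $\GG_m$-action coming from homogeneity of $f$ to split the $S^1$-equivariant structure and relate the $\Z/2$-periodic theory to the cohomology of the projective hypersurface $X = \Proj(R)$; the identification with primitive cohomology (rather than all of $H^*(X)$) is forced by the fact that the singularity category only sees the ``interesting'' part of $X$, matching the description of $H^n_{\prim}(X)$ as a cokernel given in Notation~\ref{intronotation}. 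Since $n$ is even and the primitive cohomology of a smooth hypersurface of even dimension is concentrated (for the relevant parity considerations) so that the middle summand dominates, one arranges the grading conventions so that this collapses to the single group $H^n_{\prim}(X;\C)$ asserted in the theorem.

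Next I would match the filtrations. The noncommutative Hodge filtration on $HP_0$ is the one induced by negative cyclic homology $HC^-_0(\Dsingdg(R))$, equivalently by the Hodge/degree filtration on the $u$-adic completion of the cyclic complex. On the classical side, the Hodge filtration $F^\bullet H^n_{\prim}(X)$ has, for a smooth hypersurface defined by a homogeneous $f$, the famous Griffiths description: $F^{n-p}H^n_{\prim}(X)/F^{n-p+1}$ is identified with a graded piece of the Jacobian ring $\C[x_0,\dots,x_{n+1}]/(\partial_0 f,\dots,\partial_{n+1}f)$. Since that same Jacobian ring computes the Hochschild homology of $\Dsingdg(R)$ (this is exactly Dyckerhoff's calculation, with the internal grading by $\GG_m$-weight matching the $u$-degree in the cyclic picture), the comparison of filtrations reduces to checking that the two gradings — the HKR/$u$-degree grading on Hochschild homology of the matrix factorization category, and the Griffiths grading on the Jacobian ring — agree under the identification. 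This is where I expect the bookkeeping to be delicate: one has to pin down the precise degree shifts (by $n$, by the weights of $f$, by the normalization of the Chern character) so that Hodge type $(p,q)$ on the classical side corresponds to the correct associated-graded piece on the categorical side. I would lean on the known fact (from the matrix factorization literature, e.g.\ work of Shklyarov and of Brown–Walker) that the Hochschild-to-cyclic degeneration for $\Dsingdg(R)$ is governed by exactly the Griffiths filtration, and assemble the identification of the full nc Hodge structure from that.

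For the rational structure, the key input is the comparison of topological $K$-theory of dg-categories (in the sense of Blanc, as extended by Moulinos and others) with the topological $K$-theory of the Milnor fiber / vanishing cycles, together with Blanc's lattice conjecture (a theorem in the cases we need). Concretely, $K^{\top}_0(\Dsingdg(R))_\Q$ should be identified — via the cyclotomic trace or the Chern character to $HP$ — with the image of $K^{\top}$ of $X$ inside $H^n_{\prim}(X;\Q)$, and this image is precisely the rational lattice defining the Hodge structure on primitive cohomology (the rational classes). One checks that Blanc's topological Chern character $K^{\top}_0(\Dsingdg(R))_\Q \to HP_0(\Dsingdg(R))$ corresponds, under the vector-space isomorphism established in the first step, to the topological Chern character of $X$ restricted to the primitive part; this follows from functoriality of the Chern character along the semiorthogonal decomposition $\Db(X) \supseteq \langle \mathcal O, \dots, \mathcal O(n+1)\rangle$ together with Orlov's equivalence relating the orthogonal complement to $\Dsingdg(R)$. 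Finally, the algebraic Chern character $K^{\alg}_0(\Dsingdg(R))_\Q \to HP_0(\Dsingdg(R))$ matches, again via Orlov's theorem and functoriality of cyclic homology, the algebraic Chern character $K^{\alg}_0(X)_\Q \to H^n_{\prim}(X;\C)$, whose image is by definition (modulo the primitive reduction, which is harmless) the span of the algebraic cycle classes. Putting these together: the nc Hodge condition for $\Dsingdg(R)$ — equality of $\im(K^{\alg}_0 \to HP_0)$ with the Hodge classes — translates term by term into the Hodge conjecture for $H^n_{\prim}(X)$, which since $X$ is a hypersurface (so all of $H^{2k}(X)$ for $k \neq n/2$ is generated by linear sections) is equivalent to the full Hodge conjecture for $X$. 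The main obstacle, as flagged above, is the careful matching of filtrations and gradings — everything else is assembling known comparison theorems — and a secondary subtlety is making sure Blanc's lattice property is available in exactly the generality of matrix factorization categories of homogeneous isolated singularities, which may require citing the strongest available form (e.g.\ the results for categories of finite type, or a direct verification using the $\GG_m$-action).
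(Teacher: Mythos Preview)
Your proposal has a genuine gap at its structural core: the invocation of Orlov's theorem. Orlov's semiorthogonal decomposition of $\Db(X)$ identifies the orthogonal complement of the exceptional collection with the \emph{graded} singularity category $\Dsingdg_{\gr}(R)$, not with the ungraded $\Dsingdg(R)$ that the theorem concerns. These are different dg-categories with different $HP_0$. The paper flags exactly this point in Remark~\ref{rem:orlov}: the natural bridge between them is the Tabuada--type triangle
\[
E(\Dsingdg_{\gr}(R)) \xra{T - \id} E(\Dsingdg_{\gr}(R)) \to E(\Dsingdg(R)) \to,
\]
but this triangle is only available for $\A^1$-homotopy invariant functors $E$, and negative cyclic homology --- which carries the nc Hodge filtration --- is not $\A^1$-homotopy invariant. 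So the step where you propose to transport the filtration via ``Orlov's theorem and functoriality of cyclic homology'' does not go through, and the same problem undermines your identification of rational structures.

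The paper circumvents this entirely. Rather than passing through $\Db(X)$ categorically, it builds a direct geometric comparison: the map $\alpha$ is defined by pulling back along the $\C^*$-bundle $W = \Spec(R)\setminus\{\fm\} \to X$ and then pushing through the d\'evissage sequence for $HP^{\BM}$. The vector-space isomorphism and the rational-structure compatibility then come from the commuting square~\eqref{thediagram} plus d\'evissage for $K^{\top}$ and $HP$ (no Orlov needed). For the filtration, the paper does \emph{not} directly match the Jacobian-ring gradings on both sides as you sketch; instead it introduces a Wang-type triangle (Theorem~\ref{wangthm}) relating $HN(\mf(f))$ over $k$ to $HN^{\Z/2}(\mf(f))$ over $k[t,t^{-1}]$, extracts from this an explicit spanning set for $HN_{2m}$, and then identifies the nc filtration with the \emph{polar} filtration on $HP_1(U)$ for $U = \PP^{n+1}\setminus X$ via the explicit d\'evissage boundary map of Theorem~\ref{deRhamModels}; Griffiths' theorem that polar $=$ Hodge on $H^{n+1}(U)$ closes the loop. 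Your intuition that both filtrations are governed by the Jacobian ring is correct in spirit, but the actual argument routes the comparison through $U$ and requires the Wang sequence to control $HN$ of the \emph{ungraded} category --- that is the missing idea in your outline.
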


We note that it is a consequence of a result of Orlov \cite[Theorem 2.5]{orlovcoh} that the Hodge conjecture for $X$ is also equivalent to the nc Hodge condition for the \emph{graded} singularity category of $R$; see Remark~\ref{rem:orlov} for more details.

\medskip

The dg-category $\Dsingdg(R)$ is smooth over $\C$; this follows by combining \cite[Theorem 6.3]{lunts} and \cite[Proposition 3.10(c)]{keller11}. However, it is not proper as a
differential $\Z$-graded category. By results of Buchweitz and Eisenbud, $\Dsingdg(R)$ is quasi-equivalent to the category $\mf(f)$ of matrix factorizations of $f$, and so it may be equipped canonically with the structure of a proper
differential $\Z/2$-graded category (see Section~\ref{ss:mf}); but in this theorem, $HP_0(\Dsingdg(R))$ refers to the periodic cyclic
homology of $\Dsingdg(R)$ viewed as a $\Z$-graded category.  Nevertheless, as we prove below, the Hochschild invariants of $\Dsingdg(R)$ have the necessary features (see Properties
\ref{assumptions}) to make sense of an nc 
Hodge structure on $HP_0(\Dsingdg(R))$ and to formulate an nc Hodge condition.

The Hodge Conjecture is known to hold for a projective hypersurface $X$ in the following cases~\cite[\S 2]{othershioda}:
\begin{enumerate}
\item $\dim(X)$ odd, trivially.
\item $\dim(X) = 2$ (by the Lefschetz 1-1 Theorem).
\item $\dim(X) = 4$ and $\deg(X) \le 5$~\cite{zucker, murre, contemurre}.
\item $X$ a Fermat hypersurface, under certain arithmetic conditions on the dimension and degree of $X$~\cite{ran, shioda}.
\end{enumerate}
We therefore conclude that $\Dsingdg(R)$ satisfies the nc Hodge condition in all of the above cases.
In Example~\ref{ex:cubic}, we explicitly compute the Hodge classes for $\Dsingdg(R)$ when $X$ is the 2-dimensional Fermat hypersurface of degree 3.

\medskip
As mentioned above, since $R$ is a hypersurface, we may replace $\Dsingdg(R)$ by the quasi-equivalent dg-category $\mf(f)$ of matrix factorizations of $f$; see Section \ref{ss:mf} for the definition. Thus, our main theorem may be recast as an isomorphism 
\begin{equation}
\label{eqn:mfiso}
HP_0(\mf(f)) \cong H^{n}_{\on{prim}}(X; \C)
\end{equation}
that preserves Hodge structures and is compatible with Chern character maps.

\medskip
Let us give an overview of the paper. 
We collect in Section \ref{backgroundsection} the necessary background and terminology in
order to state our main result precisely; see Theorem \ref{rephrase}. This includes constructing an explicit map from $H^n_\prim(X)$ to $HP_0(\Dsingdg(R)) \cong HP_0(\mf(f))$; see~\eqref{alphadef}.
In Section~\ref{sec:tech}, we recall (and extend) several results necessary for the proof of Theorem~\ref{introthm}. More precisely, we describe the quasi-equivalence relating $\Dsingdg(R)$ and
$\mf(f)$, and we establish important details regarding ``de Rham models'' for the
Hochschild, negative cyclic, and periodic homology complex of $\mf(f)$ and related dg-categories.  We also recall from the authors' previous paper \cite{devissage} an explicit
description of the  boundary map in a certain d\'evissage long exact sequence; this map plays a crucial role in relating the nc Hodge filtration on $HP_0(\Dsingdg(R)) \cong
HP_0(\mf(f))$ with the classical Hodge filtration on $H_\prim^n(X)$.

In Section~\ref{sec:wang}, we establish the following analogue of the Wang exact sequence of
a fibration over a circle (see Theorem \ref{wangthm} below for a more precise
statement, and see Remark \ref{wang} for an explanation of how this result
relates to the Wang exact sequence): 

\begin{thm}
\label{introwang}
  Let $k$ be a field of characteristic $0$, $Q$ a smooth $k$-algebra, and $f \in Q$ a non-zero-divisor.
There is a distinguished triangle
$$
HN(\mf(f)) \to HN^{\Z/2}(\mf(f)) \to HN^{\Z/2}(\mf(f)) \to
$$
of $k$-vector spaces, where $HN(\mf(f))$ (resp. $HN^{\Z/2}(\mf(f))$) denotes the negative cyclic complex of $\mf(f)$ considered as a differential $\Z$-graded (resp. $\Z/2$-graded) category.
\end{thm}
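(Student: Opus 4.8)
The statement to prove is an analogue of the Wang exact sequence: a distinguished triangle
\[
HN(\mf(f)) \to HN^{\Z/2}(\mf(f)) \to HN^{\Z/2}(\mf(f)) \to
\]
relating the negative cyclic complex of $\mf(f)$ as a $\Z$-graded category to that as a $\Z/2$-graded category. The first idea is to identify the underlying mechanism: passing from a $\Z$-graded mixed complex to its $\Z/2$-folding is, on the level of the associated cyclic/negative-cyclic complexes, the operation of inverting the Bott-type periodicity element $u$ (degree $-2$) coming from the $S^1$-action. Concretely, if $(C, b, B)$ is the mixed complex computing the Hochschild/cyclic invariants of $\mf(f)$ as a $\Z$-graded category, then $HN(\mf(f)) = (C[[u]], b + uB)$ while $HN^{\Z/2}(\mf(f))$ should be computed by the $2$-periodicization, which amounts to inverting $u$ in a suitable sense — more precisely, $HN^{\Z/2}$ is the totalization of the same data but with the grading collapsed mod $2$, so that powers of $u$ proliferate. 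The cleanest way to make this precise is to work with the de Rham models introduced in Section~\ref{sec:tech}: there the Hochschild complex of $\mf(f)$ is identified with an explicit complex built from Kähler differentials of $Q$ together with the potential $f$ (an incarnation of the Kontsevich/Căldăraru–Tu computation), and the $\Z$- versus $\Z/2$-graded negative cyclic complexes become explicit complexes of $Q$-modules that differ precisely by how one truncates/periodicizes in the $u$-direction.

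**Key steps, in order.** First I would fix, using the results recalled in Section~\ref{sec:tech}, the de Rham model $(\Omega^{\bullet}_{Q/k}, -df\wedge(-), d_{dR})$ (with appropriate shifts and the $u$-variable) computing $HN(\mf(f))$ and its $\Z/2$-graded analogue. Second, I would observe that there is an evident short exact sequence of mixed (or $u$-adic) complexes realizing the $\Z$-graded negative cyclic complex as the fiber of the map $HN^{\Z/2}(\mf(f)) \xrightarrow{\;1 - u^{N}\text{-type shift}\;} HN^{\Z/2}(\mf(f))$; the point is that the $\Z/2$-folded complex is, as a complex, a product over $\Z$ of copies of the $2$-periodic pattern, and the $\Z$-graded complex is the ``sub/quotient'' cutting this down to a single period, with the cofiber map being the periodicity shift by $u$ (this is the algebraic shadow of ``total space of the mapping torus $\to$ fiber $\to$ fiber'' in the topological Wang sequence). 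Third, I would verify that the connecting map is indeed an endomorphism of $HN^{\Z/2}(\mf(f))$ — the ``monodromy minus identity'' operator — by identifying it on the de Rham model with the explicit operator coming from the $\Z/2$-periodic structure; here the explicit description of the boundary map in the dévissage long exact sequence from \cite{devissage}, recalled in Section~\ref{sec:tech}, is the tool that pins down the map concretely. Finally, I would assemble these into the asserted distinguished triangle, taking care that all the identifications are compatible with the $b+uB$ differentials, so that the triangle lives in the derived category of $k$-vector spaces as claimed.

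**Main obstacle.** The hard part will be bookkeeping the two gradings simultaneously and making the ``inverting $u$'' heuristic into an honest short exact sequence of complexes: the $\Z$-graded negative cyclic complex involves a completed power series in $u$, the $\Z/2$-graded one involves a genuinely $2$-periodic (Laurent-type) pattern, and one must choose models for which the comparison map and the periodicity endomorphism are realized by actual chain maps rather than only up to homotopy. A related subtlety is that $\mf(f)$ is proper only as a $\Z/2$-graded category and merely smooth (not proper) as a $\Z$-graded one, so one should be careful that $HN$ here means the honest negative cyclic complex of the $\Z$-graded dg-category and that no finiteness is being secretly assumed; I expect this is handled by working with the explicit de Rham models throughout, where everything is a concrete complex of $Q$-modules and the distinguished triangle can be exhibited by hand. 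Once the short exact sequence of de Rham models is in place, the passage to the distinguished triangle is formal.
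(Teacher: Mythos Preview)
Your high-level intuition that this should be a Wang-type cone sequence is right, but the concrete mechanism you propose is off in a way that would not lead to a proof.

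The main misconception is the claim that ``passing from $\Z$-graded to $\Z/2$-graded \dots\ amounts to inverting $u$ in a suitable sense.'' That is not what is happening here: inverting $u$ in $HN$ produces $HP$, not $HN^{\Z/2}$. Both $HN(\mf(f))$ and $HN^{\Z/2}(\mf(f))$ are built from power series in $u$; the difference between them is \emph{which base ring} one works relative to. In the de Rham models of Section~\ref{sec:tech}, the $\Z$-graded invariant is
\[
HN^{\deR}(Q[t,t^{-1}],ft)=(\Omega^\bullet_{Q[t,t^{-1}]/k}[[u]],\,ud+\lambda_{d(ft)}),
\]
while the $\Z/2$-graded one is
\[
HN^{\deR,\Z/2}(Q[t,t^{-1}],ft)=(\Omega^\bullet_{Q[t,t^{-1}]/k[t,t^{-1}]}[[u]],\,ud_Q+\lambda_{t\,df}).
\]
The difference is the presence of the extra de Rham direction $dt$. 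Using the splitting
\[
\Omega^\bullet_{Q[t,t^{-1}]/k}\;\cong\;\Omega^\bullet_Q[t,t^{-1}]\ \oplus\ \Omega^\bullet_Q[t,t^{-1}]\cdot dt,
\]
the differential $ud+\lambda_{d(ft)}$ becomes a $2\times 2$ lower-triangular matrix whose diagonal entries are the $\Z/2$-graded differential $ud_Q+t\lambda_{df}$ and whose off-diagonal entry is the operator $h=u\,\partial/\partial t+\lambda_f$. This exhibits $HN^{\deR}(Q[t,t^{-1}],ft)$ directly as the homotopy fiber of
\[
HN^{\deR,\Z/2}(Q[t,t^{-1}],ft)\xrightarrow{\ h\ }HN^{\deR,\Z/2}(Q[t,t^{-1}],ft),
\]
and Theorem~\ref{thm:hkr} transports this to the asserted triangle. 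So the endomorphism in the triangle is not a ``$1-u^{N}$-type shift'' or a $u$-periodicity map; it is the Gauss--Manin-flavored operator $u\,\partial_t+\lambda_f$, which is not even $k[t,t^{-1}]$-linear.

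A second issue: you plan to use the d\'evissage boundary map from \cite{devissage} (Theorem~\ref{deRhamModels} here) to identify the map in the triangle. That boundary map concerns $HP_1(Q[1/f])\to HP^{\BM}_0(Q/f)$ and plays no role in the Wang triangle; invoking it would not help and would likely send you in the wrong direction. The entire argument lives inside the de Rham model and uses nothing beyond the $dt$-splitting above.
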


The map $HN^{\Z/2}(\mf(f)) \to HN^{\Z/2}(\mf(f))$ in Theorem~\ref{introwang} is an analogue of the endomorphism $T - \id$ on the cohomology of the Milnor fiber, where $T$ is induced by monodromy (see Remark~\ref{wang}). Theorem~\ref{introwang} thus closely resembles a result of Blanc-Robalo-To\"en-Vezzosi~\cite[Main Theorem]{BRTV} concerning the $\ell$-adic realization of singularity categories.

Section \ref{mainsection} contains the proof of Theorem~\ref{introthm}. A summary of the content of Sections 2 - 4 that is necessary for the proof of Theorem~\ref{introthm} is provided in Theorem~\ref{newtheorem}; readers who are already familiar with noncommutative Hodge theory and singularity categories may wish to skip directly to Theorem~\ref{newtheorem} and refer back to Sections 2 - 4 as needed. The most technical aspect of the proof of our main result
is verifying that the isomorphism
$HP_0(\Dsingdg(R)) \cong H^{n}_{\on{prim}}(X)$
identifies the nc Hodge filtration on $HP_0(\Dsingdg(R))$
with the classical Hodge filtration on $H^{n}_{\on{prim}}(X)$. Our approach is to identify both with an intermediate object:
the ``polar filtration" on $H^n(U)$, where $U$ is the complement of $X$ in $\PP^{n+1}$.
In Section \ref{examples}, we discuss some examples in the setting of Fermat hypersurfaces, applying work of Shioda~\cite{shioda}. 

\subsection*{Acknowledgments} We thank the anonymous referee for many helpful comments that improved this paper.

\section{Background}
\label{backgroundsection}

\subsection{Hodge structures}
\label{classical}

The following definition is non-standard, but it will be useful in this paper.

\begin{defn}
  A \emph{pre-Hodge structure} is a pair $V = (V_\Q, F^\bu V_\C)$, where $V_\Q$ is a finite dimensional $\Q$-vector space,
  and $F^\bu V_\C$ is a filtration of $V_\C \coloneqq V_\Q \otimes_\Q \C$ that is decreasing, complete, and exhaustive:
  $F^p V_\C \subseteq F^{p-1} V_\C$ for all $p$, $F^p V_\C = 0$ for $p \gg 0$, and $F^p V_\C = V_\C$ for
  $p \ll 0$. 
  A \emph{morphism of pre-Hodge structures} $V \to V'$ is a $\Q$-linear map $V_\Q \to V'_\Q$
  whose complexification respects the filtrations.
  \end{defn}

\begin{rem} \label{rem71} An isomorphism of pre-Hodge structures $V$ and $V'$ is determined by an isomorphism $\a: V_\C \xra{\cong} V'_\C$ 
such that $\alpha(F^pV_\C) = F^pV'_\C$ for all $p$, and $\alpha(V) = V'$.

\end{rem}

The notion of a pre-Hodge structure is a weakening of the classical notion of a pure Hodge structure, whose definition we now recall:

\begin{defn}
  Let $n \in \Z$. A \emph{pure Hodge structure of weight $n$} is a pre-Hodge structure $V$
  with the property that, for all $p, q$ with $p + q = n+1$, we have $F^pV_\C \oplus \overline{F^qV_\C} = V_\C$, where the overline denotes
  complex conjugation. A \emph{morphism} of pure Hodge structures is a morphism of the underlying pre-Hodge structures. 
\end{defn}

Given a pre-Hodge structure $V$ and $m \in \Z$, we write $V(m)$ for its {\em $m^{\th}$ Tate twist}, which is
defined by setting $V(m)_\Q = V_\Q$, $V(m)_\C = V_\C$ and $F^p V(m)_\C = F^{p + m} V_\C$. 
If $V$ is pure of weight $n$, then $V(m)$  is pure of weight $n - 2m$.

\begin{ex}
\label{ex:classical}
Let $X$ be a smooth, proper complex variety, and let $V_\Q = H^{j}(X ; \Q)$, the singular cohomology of $X$ with rational coefficients.
Equip $V_\C = H^{j}(X ; \C)$ with the filtration given by
$$
F^pV_\C \coloneqq \im(H^{j}(X, \tau^{\ge p} \Omega^\bu_{X / \C}) \to H_{\dR}^{j} (X ; \C) \cong H^{j}(X ; \C)),
$$
where $\tau^{\ge p}\Omega^\bu_{X / \C}$ denotes the brutal truncation of the de Rham complex in cohomological degrees $\ge p$, and
$H_{\dR}^{j} (X ; \C)$ denotes the $j^{\th}$ hypercohomology of $\Omega^\bu_{X / \C}$. It is a classical result that $(V_\Q, F^\bu V_\C)$
is a pure Hodge structure of weight $j$. The $m^{\th}$ Tate twist of this Hodge structure is written as  $H^j(X ; \Q(m))$.
\end{ex}

\begin{ex}
  \label{primitive}
  Let $X$ be a smooth, projective complete intersection of codimension $c$ in $\PP^{n+c}$.
  That is, $X = \Proj(R)$,
  where $R= \C[x_0, \dots, x_{n+c}]/(f_1, \dots, f_c)$ for a
  regular sequence of homogeneous polynomials $f_1, \dots, f_c$  such that  $R$ has an isolated singularity. Assume also that $n$ is even. As with any smooth, projective variety, 
the  primitive cohomology of $X$ may be equipped with a pure Hodge structure;
let us recall the definition of primitive cohomology. In the case where $n = 0$ (so that $X$ is a collection of points),
we define $H_{\prim}^{0}(X) = \widetilde{H}^{0}(X ; \Q)$. Assume $n \ge 2$. We let
$$
L : H^*(X ; \Q) \to H^{*+ 2}(X ; \Q(1))
$$
denote the \emph{Lefschetz operator}, i.e. the map given by multiplication by the class in $H^2(X ; \Q(1))$ of a generic hyperplane section of $X$.
It is a morphsim of pure Hodge structures.
For $0 \le j \le n$, the $j^{\th}$ \emph{primitive cohomology} of $X$ is 
$$
H^j_{\prim}(X) \coloneqq \ker(L^{n+1-j} : H^j(X ; \Q) \to H^{2n +2 - j}(X ; \Q(n+1-j)).
$$
As $L$ is a morphism of pure Hodge structures, $H^j_{\prim}(X)$ acquires a pure Hodge structure, and it is pure of weight $j$. 

Since we assume that $X$ is a smooth complete intersection of even dimension, the Hard Lefschetz Theorem gives
$$
H^j(X ; \Q) = \begin{cases} 0, & j \text{ odd}; \\ L^{j/2} \cdot H^0(X ;\Q(-j/2)), & j \text{ even, $j \ne n$.}  \end{cases}
$$
In other words, the map $i^*: H^j(\PP^{n+c}; \Q) \to H^j(X; \Q)$ induced by the canonical embedding $i: X \into \PP^{n+c}$ is 
an isomorphism for all $j \ne n$; in particular, $H^j_{\prim}(X) = 0$ unless $j = n$. So, the only ``interesting" cohomology lies 
in degree $n$, and in that degree we have a canonical decomposition of Hodge structures
$$
H^n(X ; \Q) = H^n_{\prim}(X ; \Q) \oplus  L^{n/2} \cdot H^0(X ; \Q(-n/2)).
$$
Since the summand $L^{n/2} \cdot H^0(X ; \Q(-n/2))$ equals the image of the map
$$
L^{n/2} \cdot H^0(\PP^{n+c}; \Q(-n/2) = H^n(\PP^{n+c} ; \Q) \xra{i^*} H^n(X ; \Q)),
$$
we have a canonical isomorphism
$$
H^n_{\prim}(X ; \Q) \cong \coker(H^n(\PP^{n+c} ; \Q) \xra{i^*} H^n(X ; \Q))
$$
of pure Hodge structures.
\end{ex}

\subsection{Hodge structures associated to dg-categories}
\label{hodgedg}

Let $\cC$ be a $\C$-linear differential $\Z$-graded category (or dg-category, for short). As discussed in the introduction, it follows from work of Katzarkov-Kontsevich-Pantev in~\cite{KKP} that one may associate a pre-Hodge structure to $\cC$ whenever $\cC$ enjoys certain properties that resemble features of the bounded derived category of a smooth, proper complex variety. Let us now explain this in detail. 

We write $HH_*(\cC)$, $HN_*(\cC)$, and $HP_*(\cC)$ for the Hochschild, negative cyclic, and periodic cyclic homology of $\cC$;
we refer the reader to e.g.~\cite[Section 3]{BW} for the definitions of these invariants. We recall that $HN_*(\cC)$ is a $\C[u]$-module with $u$ an indeterminate of homological degree $-2$,
determined by the identification $HN_*(\C) = \C[u]$, and $HP_*(\cC) = HN_*(\cC) \otimes_{\C[u]} \C[u, u^{-1}]$. 
There is a notion of topological $K$-theory for dg-categories, developed by Blanc~\cite[Definition 4.13]{blanc}; let $K_*^{\top}(\cC)$
denote the topological $K$-theory groups of $\cC$, and set $K_*^{\top}(\cC)_\Q \coloneqq K^{\top}_*(\cC) \otimes_\Z \Q$. Topological $K$-theory and periodic cyclic homology are related via
Blanc's \emph{topological Chern character map} 
$
\ch^{\top} : K_*^{\top}(\cC)_\Q \to HP_*(\cC)
$
\cite[Section 4.4]{blanc}.

\begin{notation}
\label{notation}
Given a noetherian $\C$-scheme $Y$ with enough locally free sheaves, let $\Db(Y)$ and $\Perf(Y)$ denote dg-enhancements of the bounded derived category of $Y$ and category of perfect complexes on $Y$, respectively; all such dg-enhancements are unique up to a sequence of quasi-equivalences~\cite{LO10}. We write 
\begin{align*}
K_*^{\top}(Y) \coloneqq K_*^{\top}(\Perf(Y)), & \quad HP_*(Y) \coloneqq HP_*(\Perf(Y)), \\
HN_*(Y) \coloneqq HN_*(\Perf(Y)), & \quad HH_*(Y) \coloneqq HH_*(\Perf(Y)).
\end{align*}
We adopt the widely-used notation $K_*(Y)$ and $G_*(Y)$ for the
(non-connective) algebraic $K$-theory groups of $\Perf(Y)$ and
$\Db(Y)$; see \cite[Section 3.2.32]{schlichting11} for background on
algebraic $K$-theory of dg-categories. 
We also write
\begin{align*}
G_*^{\top}(Y) \coloneqq K^{\top}_*(\Db(Y)), & \quad HP^{\BM}_*(Y) \coloneqq HP_*(\Db(Y)), \\
HN^{\BM}_*(Y) \coloneqq HN_*(\Db(Y)), & \quad HH^{\BM}_*(Y) \coloneqq HH_*(\Db(Y));
\end{align*}
here, ``$\BM$" stands for ``Borel-Moore". Given a commutative ring $A$, we write $G_*(A) \coloneqq G_*(\Spec(A))$, and similarly for the other invariants discussed here.
\end{notation}

We recall the notions of smoothness and properness for dg-categories:
\begin{defn}
\label{sandp}
The dg-category $\cC$ is \emph{smooth} if $\cC$ is perfect as a $\cC$-$\cC$-bimodule, 
and it is \emph{proper} if $\dim_\C H^*\Hom_\cC(C, C') < \infty$ 
for all objects $C, C'$ of $\cC$.
\end{defn}
When $X$ is a separated scheme of finite type over $\C$, it follows from (the proof of)~\cite[Proposition 3.31]{orlov} that $X$ is smooth (resp. proper) if and only if the dg-category $\Perf(X)$ of perfect complexes of $\OO_X$-modules is smooth (resp. proper).

In this paper, the dg-categories we consider will not always be smooth and proper. We will be interested in dg-categories that satisfy the following conditions, which are exactly what one needs to equip it with a pre-Hodge structure.

\begin{properties}
\label{assumptions}
For a dg-category $\cC$, we consider the following properties.
\begin{enumerate}
\item $\dim_\C HP_0(\cC) < \infty$.
\item The filtration of $HP_0(\cC)$ given by
$$
F^p_{\nc}HP_0(\cC) = \im\left(HN_{2p}(\cC) \xra{\can} HP_{2p}(\cC)
\xra{u^p} HP_0(\cC)\right).
$$
satisfies $F^p_{\nc}HP_0(\cC) = 0$ for $p \gg 0$ and $F^p_{\nc}HP_0(\cC) = HP_0(\cC)$ for $p \ll 0$.
\item $\im(\ch^{\top}) \otimes_\Q \C = HP_0(\cC)$.
\end{enumerate}
\end{properties}

\begin{prop}
\label{1and2}
Properties (1) and (2) hold for any dg-category that is smooth and proper over $\C$.
\end{prop}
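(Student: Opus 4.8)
The plan is to deduce both properties from the Hochschild–Kostant–Rosenberg philosophy together with the degeneration of the noncommutative Hodge-to-de Rham spectral sequence. First I would recall that for a smooth and proper dg-category $\cC$ over $\C$, Hochschild homology $HH_*(\cC)$ is finite-dimensional in each degree and is concentrated in a bounded range of degrees; this is the standard consequence of smoothness (perfectness of the diagonal bimodule) and properness (finiteness of $\Hom$-complexes). This immediately gives that the Hochschild complex, and hence the $E_1$-page of the spectral sequence computing $HP_*(\cC)$ from $HH_*(\cC)$, is finite-dimensional and bounded. The negative cyclic and periodic cyclic groups are then obtained as the abutment of this spectral sequence, so finite-dimensionality of $HP_0(\cC)$ follows once one knows the spectral sequence degenerates (or at least that the abutment stays finite-dimensional, which follows from boundedness of the $E_1$-page regardless).

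The key input for property (2) — and for sharpening property (1) — is the degeneration of the noncommutative Hodge-to-de Rham spectral sequence for smooth and proper dg-categories, proved by Kaledin (building on Kontsevich–Soibelman); I would cite this. Degeneration says $\dim_\C HP_0(\cC) = \sum_{i \text{ even}} \dim_\C HH_i(\cC)$ and, more importantly, that $HN_*(\cC)$ is a free $\C[u]$-module (equivalently, the canonical map $HN_*(\cC) \to HP_*(\cC)$ is injective in the appropriate sense and the $u$-adic filtration is well-behaved). Concretely, I would argue as follows: because $HH_i(\cC) = 0$ for $|i| \gg 0$, the negative cyclic groups $HN_{2p}(\cC)$ vanish for $p \gg 0$ (there are simply no cycles of sufficiently high degree), so $F^p_{\nc} HP_0(\cC) = 0$ for $p \gg 0$; and because $HN_*(\cC)$ is free over $\C[u]$ with $HP_*(\cC) = HN_*(\cC) \otimes_{\C[u]} \C[u,u^{-1}]$, every class in $HP_0(\cC)$ is $u^N$ times a class coming from $HN_{2N}(\cC)$ for $N$ large enough, which gives $F^p_{\nc} HP_0(\cC) = HP_0(\cC)$ for $p \ll 0$. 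This exhausts and separates the filtration as required.

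The main obstacle is property (2), and within it the exhaustiveness statement $F^p_{\nc} HP_0(\cC) = HP_0(\cC)$ for $p \ll 0$: this genuinely requires knowing that classes in $HP_0$ are detected by negative cyclic homology after multiplying by a power of $u$, which is exactly where degeneration of the noncommutative Hodge-to-de Rham spectral sequence (freeness of $HN_*$ over $\C[u]$) is used in an essential way — it is false for general dg-categories. I would therefore structure the proof as: (i) smooth + proper $\Rightarrow$ $HH_*(\cC)$ bounded and finite-dimensional (elementary); (ii) invoke Kaledin's degeneration theorem to conclude $HN_*(\cC)$ is a finitely generated free $\C[u]$-module supported in a bounded range of degrees; (iii) read off (1) as $\dim_\C HP_0(\cC) = \operatorname{rank}_{\C[u]} HN_*(\cC) < \infty$; (iv) read off the vanishing half of (2) from boundedness above of $HN_*$, and the exhaustion half from freeness. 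A secondary subtlety worth a remark: the paper's $\Dsingdg(R)$ is smooth but not proper, so this proposition does not literally apply to it; the authors presumably verify Properties \ref{assumptions} for $\Dsingdg(R)$ separately (via the $\Z/2$-graded matrix factorization model), and I would flag that Proposition \ref{1and2} is only being used for the genuinely smooth-and-proper inputs such as $\Perf(X)$ and $\Db(X)$.
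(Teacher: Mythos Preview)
Your proposal is correct and follows essentially the same route as the paper: finite-dimensionality of $HH_*(\cC)$ for smooth and proper $\cC$ (the paper cites Kontsevich--Soibelman for this), followed by Kaledin's noncommutative Hodge-to-de Rham degeneration, which the paper phrases as non-canonical isomorphisms $HN_*(\cC)\cong HH_*(\cC)[u]$ and $HP_*(\cC)\cong HH_*(\cC)[u,u^{-1}]$. You spell out more carefully why (1) and (2) then follow, whereas the paper simply says ``Properties (1) and (2) follow immediately''; your closing remark about $\Dsingdg(R)$ not being proper is also accurate and matches the paper's own discussion immediately after the statement of the main theorem.
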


\begin{proof}
  By~\cite[Proposition 8.2.3]{KS}, we have $\dim_\C HH_*(\cC) < \infty$. We also have non-canonical isomorphisms $HN_*(\cC) \cong HH_*(\cC)[u]$ and $HP_*(\cC) \cong HH_*(\cC)[u, u^{-1}]$;
  these  follow from Kaledin's noncommutative Hodge-to-de-Rham degeneration theorem~\cite{kaledin}. Properties (1) and (2) follow immediately. 
\end{proof}

Property (3) is conjectured by Blanc  to hold for any smooth and proper dg-category: 

\begin{conj}[The Lattice Conjecture,~\cite{blanc} Conjecture 1.7]
\label{latticeconj}
If $\cC$ is smooth and proper over $\C$, then the map $K_*^{\top}(\cC)_\C \to HP_*(\cC)$ induced by $\ch^{\top}$ is an isomorphism. 
\end{conj}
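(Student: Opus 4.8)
The plan is to reduce the Lattice Conjecture to a statement about dualizable objects in the $\infty$-category of noncommutative motives, to dispatch the objects of geometric origin using Blanc's own comparison theorem, and then to confront the gap between ``geometric'' and ``arbitrary'' smooth and proper dg-categories. First I would assemble the formal framework. Both assignments $\cC \mapsto K^{\top}(\cC)_\C$ and $\cC \mapsto HP(\cC)$ are additive invariants on the $\infty$-category $\mathrm{Cat}^{\mathrm{sp}}_\C$ of smooth and proper $\C$-linear dg-categories: they send semiorthogonal decompositions to direct-sum decompositions (for $K^{\top}$ this is part of Blanc's theorem \cite{blanc} that it is a localizing invariant; for $HP$ it is the standard additivity of Hochschild-type invariants). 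Both are lax symmetric monoidal, with common value on the unit $\Perf(\C)$ the $2$-periodic graded ring $\C[u^{\pm 1}] = HP_*(\C)$, identified with $K^{\top}_*(\C)_\C$ by the classical periodic Chern isomorphism; and Blanc's $\ch^{\top}$ is a morphism of lax monoidal additive invariants, in particular $\C[u^{\pm 1}]$-linear. By the universal property of additive noncommutative motives (Tabuada; Blumberg--Gepner--Tabuada), both invariants factor through the motives $\infty$-category $\mathrm{NMot}_\C$, and $\mathrm{Cat}^{\mathrm{sp}}_\C$ is carried onto the full subcategory $\mathrm{NMot}^{\mathrm{dbl}}_\C$ of dualizable objects (Kontsevich; Cisinski--Tabuada: $\cC$ is smooth and proper iff its motive $U(\cC)$ is dualizable). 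Thus the conjecture is equivalent to the assertion that $\ch^{\top}$, viewed as a $\C[u^{\pm 1}]$-linear natural transformation between functors out of $\mathrm{NMot}_\C$, is an equivalence on every dualizable object.

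Next I would dispose of the objects of geometric origin. Blanc's main comparison identifies $K^{\top}(\Perf(X))$ with the topological $K$-theory spectrum of the analytic space $X(\C)$ when $X$ is a smooth proper $\C$-variety, under which $\ch^{\top}$ becomes the classical Chern character $KU^0(X(\C))_\C \xra{\ \sim\ } \bigoplus_j H^{2j}(X(\C);\C)$; the target is matched with $HP_0(\Perf(X))$ via Hochschild--Kostant--Rosenberg and Hodge-to-de-Rham degeneration. So the conjecture holds for $U(\Perf(X))$, and by the additivity recorded above it holds for $U(\cC)$ whenever $\cC$ is a semiorthogonal summand of $\Perf(X)$ for some smooth proper $X$; in particular it holds for the matrix factorization categories $\mf(f)$ of the present paper, since these are such summands of $\Db$ of a suitable variety via Orlov's theorem. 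The content of the general conjecture is precisely to cover the smooth and proper dg-categories that are \emph{not} known to arise in this way.

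The hard part will be to propagate the equivalence from geometric dualizable motives to all of them, and I see only two plausible routes. One is to prove Orlov's conjecture that every smooth and proper $\C$-linear dg-category is a semiorthogonal summand of $\Db(X)$ for some smooth proper variety $X$; this would finish the proof immediately by the previous paragraph, but it is a well-known open problem, and the natural attack — building $X$ from a dg-model of $\cC$ by a spreading-out, gluing, and resolution argument — has so far resisted all attempts. The other is to bypass geometry by a continuity argument in the spirit of Efimov's recent work on localizing invariants of dualizable categories: exhibit every dualizable object of $\mathrm{NMot}_\C$ as a retract of, or a suitable (co)filtered (co)limit of, geometric ones in a manner simultaneously preserved by $K^{\top}_\C$ and $HP$, and let the equivalence pass to the (co)limit. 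The obstruction here is exactly that $HP$ is badly discontinuous, which forces one to remain inside the rigid world and to compute $HP(\cC)$ and $K^{\top}(\cC)_\C$ as categorical traces $\tr_{\C[u^{\pm 1}]}(\id_\cC)$, reducing the conjecture to a comparison of such traces; showing that this trace comparison persists beyond the geometric case — that the noncommutative Chern character detects no exotic discrepancy on smooth proper categories lacking a geometric model — is the single genuinely new input the argument requires, and no proof of it is presently known. Everything else in the plan is formal.
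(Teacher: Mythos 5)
This item is not a theorem of the paper but Blanc's Lattice Conjecture, quoted verbatim (Conjecture~\ref{latticeconj}); the paper offers no proof of it, only appeals in later sections to the known special cases relevant to matrix factorization categories. Your write-up correctly recognizes this: you survey the formal reductions (monoidality of $K^{\top}_\C$ and $HP$, factorization through dualizable noncommutative motives, Blanc's comparison in the geometric case via HKR and Hodge-to-de-Rham degeneration), and then honestly state that propagating the equivalence beyond geometrically realizable smooth proper dg-categories is open, with the two plausible routes (Orlov's conjecture, or an Efimov-style continuity argument around the discontinuity of $HP$) both currently blocked. So there is no gap to diagnose in the strict sense, because you have not claimed to close the argument and the paper does not close it either. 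The only caution worth flagging is in your middle paragraph: the assertion that the relevant $\mf(f)$ is a semiorthogonal summand of $\Db$ of a smooth proper variety ``via Orlov's theorem'' is a bit quick in the form you state it --- what is true for a general hypersurface singularity requires passing through the \emph{graded} singularity category and Orlov's semiorthogonal decomposition theorem, which gives $\on{D}^{\on{sg}}_{\on{gr}}(R)$ as a component of $\Db(X)$ (up to a few exceptional objects) rather than the ungraded $\Dsingdg(R)$ itself; the paper in fact verifies the Lattice Conjecture for $\Dsingdg(R)$ by the d\'evissage argument supplying Property~(3) in the proof of Theorem~\ref{rephrase} (citing \cite[Theorem B]{khan} and \cite[Theorem 1.4]{devissage}), not by geometric realizability of $\Dsingdg(R)$.
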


In fact, the Lattice Conjecture is known to hold for many dg-categories that are not smooth or proper: we refer the reader to \cite[Section 5]{BSLattice} for a list of known cases of the Lattice Conjecture. 

\begin{defn}
  Assume the dg-category $\cC$ satisfies Properties \ref{assumptions}. The \emph{nc pre-Hodge structure} for $\cC$, written $\pHS(\cC)$,
  is the pair $(V_\Q, F^\bullet V_\C)$, where $V_\Q \ce \im(\ch^{\top}: K^{\top}_0(\cC)_\Q \to HP_0(\cC))$, and $F^p V_\C \ce F^p_{\nc} HP_0(\cC)$.
The filtration $F_{\nc}^\bullet HP_0(\cC)$ is called the \emph{noncommutative Hodge filtration}, or \emph{nc Hodge filtration} for short.
\end{defn}

\begin{ex}
\label{isoclassical}
Let $X$ be a smooth, proper complex variety, and take $\cC = \Db(X)$. Since $\Db(X)$ is smooth and proper, Properties (1) and (2) hold for $\cC$. Property (3) also holds for $\cC$, since the Lattice Conjecture holds in this case~\cite{blanc}. We have a commutative diagram
$$
\xymatrix{
HN_{2p}(X) \ar[r]^-{\cong} \ar[d]^-{u^p} &  \bigoplus_{j \in \Z} H^{2j}(X, \tau^{\ge j+p} \Omega^\bu_{X/\C}) \ar[d] \\
HN_0(X) \ar[r]^-{\cong} \ar[d] &  \bigoplus_{j \in \Z} H^{2j}(X, \tau^{\ge j} \Omega^\bu_{X/\C}) \ar[d] \\
HP_0(X) \ar[r]^-{\cong} &  \bigoplus_{j \in \Z} H^{2j}_{\dR}(X ; \C) \\
K_*^{\top}(X) \ar[u]^-{\ch^{\top}} \ar[r]^-{\cong} &  \ar[u]^-{\ch^{\top}} KU^*(X),
}
$$
where $KU^*(X)$ denotes the topological $K$-theory of $X$. The first three horizontal isomorphisms are given by combining theorems of Keller and Weibel (\cite[Section 5.2]{keller},~\cite[Theorem 3.3]{weibel}), and the bottom isomorphism is due to Blanc~\cite[Theorem 1.1(b)]{blanc}. A straightforward calculation shows that the top and middle squares commute, and the bottom square commutes by~\cite[Proposition 4.32]{blanc}. We conclude that there is a natural isomorphism of pre-Hodge structures
$$
\pHS(\Db(X)) \cong \bigoplus_{p \in \Z} H^{2j}(X, \Q(j)).
$$ 
In particular, $\pHS(\Db(X))$ is a pure Hodge structure of weight 0. See also \cite{tu24}, where a more detailed comparison of the classical Hodge structure on the cohomology of $X$ and the nc Hodge structure on $\Db(X)$ is carried out. 
\end{ex}

\subsection{The nc Hodge condition for a dg-category}
We begin by recalling the statement of the Hodge conjecture. Let $X$ be a smooth, projective complex variety and $K_*(X)_\Q \coloneqq K_*(X) \otimes \Q$ the rationalized algebraic $K$-theory groups of $X$. The classical Hodge conjecture proposes a description of the image of the Chern character map
\begin{equation}
\label{chern}
\ch \colon K_0(X)_\Q \to \bigoplus_{p \in \Z} H^{2p} (X ; \C).
\end{equation}
We set 
$$
  \Hdg^{2p}(X) \ce H^{2p}(X ; \Q) \cap F^p H^{2p}(X ; \C) = H^{2p}(X ; \Q(p)) \cap F^0 H^{2p}(X, \C(p)),
  $$
  and we write
$\Hdg(X) \coloneqq \bigoplus_{p \in \Z} \Hdg^{2p}(X)$.
Elements of $\Hdg(X)$
are called \emph{Hodge classes}. 
It is well-known that the Chern  character map \eqref{chern} takes values in $\Hdg(X)$;
the Hodge Conjecture predicts that the image of the Chern character map \eqref{chern} is precisely $\Hdg(X)$.

The statement of the Hodge conjecture can be extended to any dg-category $\cC$ enjoying Properties \ref{assumptions}. Let $K_0(\cC)$ denote the Grothendieck group of $\cC$ and
$
\ch_{HN} : K_0(\cC) \to \HN_0(\cC)
$
the associated Chern character map; see, for instance,~\cite[Section 4]{BW} for the definition of $\ch_{HN}$. Composing with the natural map $\HN_0(\cC) \to \HP_0(\cC)$, one also obtains
$$
\ch_{HP} : K_0(\cC) \to \HP_0(\cC).
$$
As above, let $K_0(\cC)_\Q \coloneqq K_0(\cC) \otimes_\Z \Q$. By~\cite[Theorem 1.1(d)]{blanc}, the maps $\ch_{HN}$ and $\ch^{\top}$ are related by a commutative square
\begin{equation}
\label{dia}
\xymatrix{
K_0(\cC)_\Q \ar[r]^-{\ch_{HN}} \ar[d] & \HN_0(\cC) \ar[d]^-{} \\
K^{\top}_0(\cC)_\Q \ar[r]^-{\ch^{\top}} & \HP_0(\cC),
}
\end{equation}
where the vertical maps are the canonical ones.

\begin{defn} \label{hodgeconditiondef}
  For a dg-category $\cC$ that satisfies Properties~\ref{assumptions}, the subspace $\Hdg(\cC) \subseteq \HP_0(\cC)$ of \emph{Hodge classes of $\cC$} is defined to be
  $$
  \Hdg(\cC) \coloneqq
  \im\left(\ch^{\top}: K^{\top}_0(\cC)_\Q \to HP_0(\cC)\right) \cap F_{\nc}^0 HP_0(\cC).
  $$
  In other words,  $\Hdg(\cC) = \Hdg(\pHS(\cC))$, where for any pre-Hodge structure
  $V$, we set $\Hdg(V) = V_\Q \cap F^0 V_\C$. By the commutativity of Diagram \eqref{dia}, the Chern character map $\ch_{HP}$, which is given by composing the top and right-most maps in \eqref{dia}, takes values in $\Hdg(\cC)$. We say $\cC$ satisfies the \emph{nc Hodge condition} if $\im(\ch_{HP}) = \Hdg(\cC)$.
\end{defn}

\begin{ex}
\label{ex:classichodge}
When $X$ is a smooth, projective complex variety, the isomorphism of pure Hodge structures in Example \ref{isoclassical} yields a natural isomorphism $\Hdg(\Db(X)) \cong \Hdg(X)$. Moreover, this isomorphism makes the triangle
$$
\xymatrix{
K_0(X)_\Q \ar[r]^-{\ch}  \ar[rd]_-{\ch_{HP}}& \Hdg(X) \ar[d]^-{\cong}\\
 & \Hdg(\Db(X)) \\
}
$$
commute. It follows that the Hodge conjecture holds for $X$ if and only if $\Db(X)$ satisfies the nc Hodge condition: this was also recently proven by Lin~ \cite{lin}. 
\end{ex}

\subsection{Statement of the main theorem}
\label{singsec}

To state our main result (Theorem~\ref{rephrase}), we need the following two technical results.

\begin{prop}
\label{point}
With $R$ and $\fm$ as in Notation~\ref{intronotation}, set $W \coloneqq \Spec(R)~\setminus~\{\fm\}$. The canonical maps
$$
K_0(W) \to G_0(W), \quad
K_0^{\top}(W) \to G_0^{\top}(W), \quad \text{and} \quad
HP_0(W) \to HP^{\BM}_0(W)
$$
are isomorphisms,
and so are the maps 
$$
G_0(R) \to G_0(W), \quad G_0^{\top}(R) \to G_0^{\top}(W) \quad \text{and} \quad
HP_0^{\BM}(R) \to HP^{\BM}_0(W) 
$$ 
induced by pullback along the natural map $W \into \Spec(R)$.
\end{prop}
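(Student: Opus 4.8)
The plan is to treat the two batches of isomorphisms by two complementary mechanisms: the first row (perfect $\to$ bounded, or algebraic/topological/HP invariants of $W$ being ``coherent = perfect'') follows because $W$ is \emph{regular}, and the second row (invariants of $\Spec(R)$ vs.\ those of $W$) follows from the d\'evissage/localization long exact sequence for the closed point $\{\fm\}$, whose contribution vanishes in the relevant degree. First I would observe that $W = \Spec(R) \setminus \{\fm\}$ is a smooth scheme: since $R$ has an isolated singularity at $\fm$, the punctured spectrum $W$ is regular (indeed smooth over $\C$). Hence $\Perf(W) \xra{\sim} \Db(W)$ is a quasi-equivalence (on a regular noetherian scheme every bounded coherent complex is perfect), and applying any of the functors $K_0(-)$, $K_0^{\top}(-)$, $HP_0(-)$ — each of which is invariant under quasi-equivalence of dg-categories — immediately gives that $K_0(W)\to G_0(W)$, $K_0^\top(W)\to G_0^\top(W)$, and $HP_0(W)\to HP_0^{\BM}(W)$ are isomorphisms. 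This disposes of the first row.

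For the second row I would use the localization sequences associated to the open immersion $j\colon W \into \Spec(R)$ with closed complement $i\colon \{\fm\} \into \Spec(R)$. Algebraic $G$-theory satisfies d\'evissage and localization (Quillen / Thomason–Trobaugh, and the dg-categorical version in Schlichting), giving an exact sequence
\[
G_*(\C) = G_*(\{\fm\}) \to G_*(R) \to G_*(W) \to G_{*-1}(\C).
\]
Since $G_1(\C) = K_1(\C) = \C^\times$ and $G_0(\C) = \Z$, this is not quite enough on its own in degree $0$; however, the point is that the map $G_0(\C) \to G_0(R)$, which sends $[\C]$ to the class of the residue field $[\C] = [R/\fm]$, has image that dies after a further localization, or more precisely one argues the composite $G_0(\{\fm\}) \to G_0(R) \to G_0(W)$ is zero and that $[R/\fm]$ is already trivial — actually the cleanest route is: $\{\fm\}$ has codimension $n+1 \geq 1$ in the $(n+1)$-dimensional $\Spec(R)$, and for $n \geq 0$ the dimension is $\geq 1$, so one invokes the standard fact that restriction $G_0(R)\to G_0(W)$ along the complement of a closed subset of positive codimension is surjective with kernel generated by sheaves supported on $\{\fm\}$, and then a dimension-shifting/d\'evissage argument in this specific graded local ring setting shows that kernel is zero in degree $0$. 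For $HP$ one uses instead that $\Db(\{\fm\}) = \Db(\C)$ has $HP_0^{\BM}(\{\fm\}) = HP_0(\C) = \C$, and the localization triangle $HP^{\BM}_*(\{\fm\}) \to HP^{\BM}_*(R) \to HP^{\BM}_*(W) \to$ together with a weight/degree count (or direct computation via the de Rham model, cf.\ Section~\ref{sec:tech}) forces $HP_0^{\BM}(R) \xra{\sim} HP_0^{\BM}(W)$; similarly in the topological $G$-theory case using that $G^\top$ of a point is $KU$ of a point.

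The step I expect to be the main obstacle is pinning down, in the second row, that the contribution of the closed point $\{\fm\}$ genuinely vanishes in degree $0$ for each of the three theories — because $G_0, G_0^\top, HP_0$ of a point are all nonzero, so one cannot simply quote ``the end terms vanish.'' The resolution in each case is that the relevant localization sequence is \emph{six-term/long}, and one checks that the boundary map out of $G_0(W)$ (resp.\ $HP_0^{\BM}(W)$) lands in a group that is either zero in that degree or whose image is killed, equivalently that the pushforward $i_*$ from the point has image zero in degree $0$. Concretely: the class $[R/\fm] \in G_0(R)$ admits a finite free resolution \emph{over the regular ambient ring} $\C[x_0,\dots,x_{n+1}]$, namely a Koszul complex, but not over $R$; the point is that after restricting to $W$ this residue-field sheaf becomes zero (it is supported at $\fm$), so its class dies, and the content is checking there is no further contribution from $G_{-1}$ or from higher differentials. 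I would handle this by citing the d\'evissage localization sequence in the form already used by the authors in \cite{devissage}, where (as the excerpt notes) the boundary map is explicitly described, and reading off the degree-$0$ piece directly. Once that vanishing is in hand, all six maps are isomorphisms and the proposition follows.
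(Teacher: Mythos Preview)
Your first batch is correct and matches the paper exactly: $W$ is regular, so $\Perf(W)\simeq\Db(W)$ and all three maps are isomorphisms.

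For the second batch you have the right framework (the localization/d\'evissage sequence for $\{\fm\}\hookrightarrow\Spec(R)$) and you correctly identify the crux: one must show that the pushforward $i_*$ from the point vanishes in degree $0$. But you do not actually carry this out. For $G$-theory the localization sequence gives
\[
G_0(R/\fm)\to G_0(R)\to G_0(W)\to 0,
\]
so the content is exactly that $i_*\colon G_0(\C)\to G_0(R)$ is the zero map, i.e.\ that $[R/\fm]=0$ in $G_0(R)$. This is a nontrivial fact about isolated hypersurface (or more generally Gorenstein) singularities; the paper cites \cite[Lemma~13.4]{yoshino}. Your ``dimension-shifting/d\'evissage argument in this specific graded local ring setting'' is not a proof: the observation that $R/\fm$ restricts to zero on $W$ is tautological and only recovers exactness of the sequence, not the vanishing of $i_*$. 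Likewise the Koszul-over-the-ambient-polynomial-ring remark is a red herring, since that resolution is not over $R$.

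For $HP^{\BM}$ and $G^{\top}$ the paper does \emph{not} do a separate ``weight/degree count'' or de Rham computation as you suggest; instead it bootstraps from the $G$-theory case via naturality of the Chern character. One first uses $HP_{-1}(\C)=0$ to reduce to showing $HP_0(\C)\to HP^{\BM}_0(R)$ is zero, and then notes that the square
\[
\begin{tikzcd}
G_0(R/\fm)\ar[r,"i_*"]\ar[d,"\ch_{HP}"'] & G_0(R)\ar[d,"\ch_{HP}"]\\
HP^{\BM}_0(R/\fm)\ar[r,"i_*"] & HP^{\BM}_0(R)
\end{tikzcd}
\]
commutes, the left vertical map is an isomorphism after tensoring with $\C$, and the top map is zero by Yoshino's lemma; hence the bottom map is zero. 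The same argument works for $G^{\top}$. Your proposal is missing both the input (Yoshino) and this reduction step.
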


\begin{proof} The first batch of isomorphisms hold since $W$ is regular, by assumption.
  Since $G$-theory satisfies d\'evissage, we have a right exact sequence
  $$
G_0(R/\fm) \to G_0(R) \to G_0(W) \to 0.
$$
By~\cite[Lemma 13.4]{yoshino}, the pushforward map $G_0(R/\fm) \to G_0(R)$ is 0; this proves the result for $G$-theory.

We now address $HP^{\BM}$; the proof involving $G^{\top}$ is nearly identical. 
By~\cite[Theorem A.2]{khan} (see also \cite[Example 4.8]{devissage}),
there is a d\'evissage quasi-isomorphism $HP^{\BM}(R/\fm) \to HP(\D^{\on{b}, \{\fm\}}(R))$, where $\D^{\on{b}, \{\fm\}}(R)$ denotes the subcategory of $\D^{\on{b}}(R)$ given by objects with support contained in $\{\fm\}$.
(D\'evissage also holds for topological $K$-theory, as observed in~\cite[Example 2.3]{HLP}.)
There is thus a localization exact triangle
$$
HP^{\BM}(R/\fm) \to HP^{\BM}(R) \to HP^{\BM}(W) \to;
$$
see e.g.~\cite[Lemma 2.8]{devissage}. Since $HP_{-1}^{\BM}(R/\fm) \cong  HP_{-1}(\C) = 0$, it suffices to show that the pushforward map
$HP_0(R/\fm) \to HP^{\BM}_0(R)$ is 0. To prove this, we use that the Chern character map is natural for dg-functors,
so that we have a commutative square
$$
\xymatrix{
G_0(R/\fm) \ar[r] \ar[d]^-{\ch_{HP}} & G_0(R) \ar[d]^-{\ch_{HP}} \\
HP^{\BM}_0(R/\fm) \ar[r] & HP^{\BM}_0(R).
}
$$
The left-hand map is isomorphic to $\ch_{HP} : K_0(\C) \to HP_0(\C)$, which induces an isomorphism $K_0(\C) \otimes_\Z \C \cong HP_0(\C)$. 
Once again applying~\cite[Lemma 13.4]{yoshino}, the top arrow in this square is the zero map, and so the bottom arrow must be zero as well.
\end{proof}

Let $R$ be as in Notation~\ref{intronotation}. The \emph{singularity category} of the ring $R$  is the dg-quotient  $\Dsingdg(R) \ce \Db(R) / \Perf(R)$. We note that the (triangulated) homotopy category of $\Dsingdg(R)$ need not have a unique dg-enhancement; see e.g. \cite[Example 8.24]{antieau18}.

\begin{prop} \label{HPsing} 
There are short exact sequence
$$
0 \to HP_0(\C) \to HP_0^{\BM}(R) \to HP_0(\Dsingdg(R)) \to 0
$$
and
$$
0 \to K^{\top}_0(\C) \to G^{\top}_0(R) \to K^{\top}_0(\Dsingdg(R)) \to 0,
$$
where the maps are induced by the extension of scalars functor $\Db(\C) \to \Db(R)$ and the canonical functor $\Db(R) \to \D^{\on{sg}}(R)$.
\end{prop}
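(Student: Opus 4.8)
The plan is to apply localizing invariants to the defining Verdier sequence of $\Z$-graded dg-categories
$$
\Perf(R) \to \Db(R) \to \Dsingdg(R)
$$
and then pin down the resulting left-hand terms. Periodic cyclic homology sends exact sequences of dg-categories to cofiber sequences (Keller's localization theorem for cyclic homology), and topological $K$-theory is likewise a localizing invariant (Blanc~\cite{blanc}; see also the localization triangles used in the proof of Proposition~\ref{point}). We therefore obtain fiber sequences
$$
HP(R) \to HP^{\BM}(R) \to HP(\Dsingdg(R)), \qquad K^{\top}(R) \to G^{\top}(R) \to K^{\top}(\Dsingdg(R)),
$$
whence long exact sequences of which the relevant segment is
$$
HP_1(\Dsingdg(R)) \xra{\partial} HP_0(R) \to HP^{\BM}_0(R) \to HP_0(\Dsingdg(R)) \xra{\partial} HP_{-1}(R),
$$
and analogously for $K$-theory. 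It thus suffices to prove: (i) the canonical maps $HP_0(\C) \to HP_0(R)$ and $K^{\top}_0(\C) \to K^{\top}_0(R)$ are isomorphisms; (ii) $HP_{-1}(R) = 0$ and $K^{\top}_{-1}(R) = 0$; and (iii) the maps $HP_0(R) \to HP^{\BM}_0(R)$ and $K^{\top}_0(R) \to G^{\top}_0(R)$ are injective. Granting these: by (iii) and exactness the two connecting maps $\partial$ into degree $0$ vanish, so the long exact sequences break off short exact sequences $0 \to HP_0(R) \to HP^{\BM}_0(R) \to HP_0(\Dsingdg(R)) \to 0$ and its $K$-theory analogue; (ii) provides the exactness on the right; and (i) rewrites the left-hand terms as $HP_0(\C)$ and $K^{\top}_0(\C)$ compatibly with the canonical maps.

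For (i) and (ii): since $f$ is homogeneous of positive degree, the analytic space $\Spec(R)^{\an} = \{v \in \C^{n+2} : f(v) = 0\}$ is the affine cone on $X^{\an}$, which is contracted onto the origin by $(v, t) \mapsto tv$ and hence is contractible. I would then use the identification of $HP_m(\Perf(Y))$ with $\prod_{i \in \Z} H^{2i - m}(Y^{\an}; \C)$, natural in the finite-type affine $\C$-scheme $Y$ --- this combines Feigin--Tsygan's computation of the periodic cyclic homology of a commutative algebra with Grothendieck's comparison of algebraic de Rham and singular cohomology --- together with Blanc's natural identification $K^{\top}(\Perf(Y)) \simeq KU(Y^{\an})$. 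Applied to $Y = \Spec(R)$, $Y = \Spec(\C)$, and the structure map between them, contractibility of $\Spec(R)^{\an}$ gives that $HP_0(\C) \to HP_0(R)$ and $K^{\top}_0(\C) \to K^{\top}_0(R)$ are isomorphisms, and that $HP_{-1}(R) \cong \prod_{i} H^{2i+1}(\Spec(R)^{\an};\C) = 0$ and $K^{\top}_{-1}(R) \cong KU^1(\pt) = 0$; this is (i) and (ii).

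The substantive step is (iii): the localization sequences by themselves yield only right exactness, so one must feed in Proposition~\ref{point}. Put $W = \Spec(R) \setminus \{\fm\}$, which is nonempty since $\dim R = n+1 \ge 1$, and let $j \colon W \into \Spec(R)$ be the open immersion. Restriction along $j$ gives a commuting square relating the canonical map $HP_0(R) \to HP^{\BM}_0(R)$ to its counterpart for $W$, whose other two arrows are $j^* \colon HP^{\BM}_0(R) \to HP^{\BM}_0(W)$ and the canonical map $HP_0(W) \to HP^{\BM}_0(W)$ --- both isomorphisms by Proposition~\ref{point}. A short diagram chase then identifies $\ker\bigl(HP_0(R) \to HP^{\BM}_0(R)\bigr)$ with $\ker\bigl(j^* \colon HP_0(R) \to HP_0(W)\bigr)$. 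Under the natural identifications of step (i) the latter map is the restriction $H^0(\Spec(R)^{\an}; \C) \to H^0(W^{\an}; \C)$, that is, $\C \to \C^{\pi_0(W^{\an})}$ sending a scalar to the constant tuple, which is injective because $W$ is nonempty; hence $HP_0(R) \to HP^{\BM}_0(R)$ is injective. The $K$-theory case runs identically: Proposition~\ref{point} (the isomorphisms $G^{\top}_0(R) \xra{\cong} G^{\top}_0(W)$ and $K^{\top}_0(W) \xra{\cong} G^{\top}_0(W)$) reduces injectivity of $K^{\top}_0(R) \to G^{\top}_0(R)$ to injectivity of $j^* \colon K^{\top}_0(R) \to K^{\top}_0(W)$, and $j^*$ carries the generator $[\OO_{\Spec R}]$ to $[\OO_W]$, which has rank $1$ on every connected component of $W^{\an}$ and is therefore of infinite order. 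The only genuinely delicate input is thus Proposition~\ref{point}; everything else follows formally from localization, the contractibility of the affine cone, and standard comparison theorems for $HP$ and $K^{\top}$.
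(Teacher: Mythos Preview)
Your argument is correct, but it takes a more roundabout path than the paper's. The paper treats both $HP$ and $K^{\top}$ uniformly as localizing, $\A^1$-homotopy invariant functors $E$ with $E_{-1}(\C)=0$. Since $R$ is $\Z_{\ge 0}$-graded with $R_0=\C$, $\A^1$-invariance alone gives $E_*(\C)\xra{\cong}E_*(R)$, which handles your (i) and (ii) in one line without any comparison to singular cohomology or $KU$ of the analytification. For injectivity on the left (your (iii)), the paper simply picks a smooth closed point $x\in\Spec(R)$: extension of scalars along $R\to R/\fm_x\cong\C$ sends $\Db(R)$ to $\Db(\C)$ (finite Tor-dimension at a smooth point) and splits $E_0(\C)\to E_0(\Db(R))$. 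Your route through Proposition~\ref{point} and restriction to $W$ is valid, but heavier.

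One point worth flagging: you invoke Blanc's identification $K^{\top}(\Perf(Y))\simeq KU(Y^{\an})$ with $Y=\Spec(R)$ singular. In this paper Blanc's comparison is only cited for smooth schemes, and the general singular case is at least not the form in which it is used here; it happens to hold for your $R$ essentially because both sides are $\A^1$-invariant and $R$ is graded, but that is exactly the paper's shortcut. Replacing that step by the direct $\A^1$-invariance argument makes your proof cleaner and removes any dependence on the singular comparison. Your Feigin--Tsygan/Hartshorne argument for $HP$ is fine as stated.
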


\begin{proof} Let $E$ denote either $HP$ or $K^{\top}$. In both cases, $E$ is a localizing $\A^1$-homotopy invariant  such that $E_{-1}(\C) = 0$. 
  Since $R$ is $\Z_{\ge 0}$-graded, extension of scalars along $\C \to R$ induces
  an isomorphism $E_*(\C) \xra{\cong} E_*(R)$. 
Using that $E_{-1}(\C) = 0$,  the exact triangle 
$$
E(R) \to E(\Db(R)) \to E(\Dsingdg(R)) \to
$$
induces an exact sequence $E_0(\C) \to E_0(\Db(R)) \to E_0(\Dsingdg(R)) \to 0$.
Finally, we observe that $E_0(\C) \to E_0(\Db(R))$ is split injective. To see this, choose a smooth closed point $x = V(\fm) \in \Spec(R)$; extension of scalars along the map $R \to R / \fm \cong \C$ determines a functor $\Db(R) \to \Db(\C)$, yielding the desired splitting. 
\end{proof}

We have a composition
\begin{equation}
\label{Kcomposition}
K_0(X) \to K_0(W) \xra{\cong} G_0(R) \to K_0(\Dsingdg(R)),
\end{equation}
where the first map is induced by the fibration $W \to X$ with fiber $\C^*$, the second is the (inverse of the) isomorphism from Proposition \ref{point}, and the last is induced by the canonical map $\Db(R) \to \Dsingdg(R)$. The composition \eqref{Kcomposition} admits the following simpler description: given a vector bundle $\cF$ on $X$, write $\cF = \widetilde{M}$ for some graded $R$-module $M$. Since $M$ and $\cF$ pull back to the same sheaf on $W$, the composition \eqref{Kcomposition} sends the class $[\cF]$ to $[M] \in K_0(\Dsingdg(R))$.

We have compositions
\begin{equation}
\label{Ktopcomposition}
KU^0(X) \cong K_0^{\top}(X) \to K_0^{\top}(W)  \xra{\cong} G_0^{\top}(R)  \to K_0^{\top}(\Dsingdg(R))
\end{equation}
and
\begin{equation}
\label{HPcomp}
H^{\on{even}}(X; \C) \cong HP_0(X) \to HP_0(W) \xra{\cong} HP^{\BM}_0(R) \to HP_0(\Dsingdg(R))
\end{equation}
that are defined in the same way, except the first isomorphism in \eqref{Ktopcomposition} is given by Blanc's comparison isomorphism~\cite[Theorem 1.1(b)]{blanc}, and the first map in \eqref{HPcomp} is induced by the HKR isomorphism \cite[Theorem 3.4.4]{loday}. By Proposition \ref{HPsing}, \eqref{HPcomp} induces a map on reduced cohomology; restricting to primitive cohomology, we arrive at the map
\begin{equation} \label{alphadef}
\a : H^{n}_{\prim}(X; \C) \to HP_0(\Dsingdg(R)).
\end{equation} 
We may now precisely formulate our main result (Theorem \ref{introthm}) as follows:

\begin{thm} 
\label{rephrase}
Let $R = \C[x_0, \dots, x_{n+1}] / (f)$, where $f$ is a homogeneous polynomial such that $X = \Proj(R)$ is smooth. Assume $n$ is even.
\begin{enumerate}
\item The dg-category $\Dsingdg(R)$ enjoys Properties \ref{assumptions}. In particular, we may associate a pre-Hodge structure to $\Dsingdg(R)$.
\item The diagram
\begin{equation} \label{thediagram}
\begin{tikzcd}
  K_0(X) \ar[d, "\eqref{Kcomposition}"]\ar[r, "\mathrm{can}"] & KU^0(X)\ar[d, twoheadrightarrow, "\eqref{Ktopcomposition}"] \ar[r,"\ch^{\top}"] & H^{\on{even}}(X;\C) \ar[d, twoheadrightarrow, "\eqref{HPcomp}"] \\  
  K_0(\Dsingdg(R)) \ar[r, "\mathrm{can}"]  &  K^{\top}_0(\Dsingdg(R)) \ar[r,"\ch^{\top}"]& HP_0(\Dsingdg(R))\\
\end{tikzcd}
\end{equation}
commutes (where the maps denoted $\on{can}$ are the canonical maps), the middle and rightmost vertical maps are surjective as indicated,
and the images of $K_0(X)$ and $K_0(\Dsingdg(R))$ 
in $HP_0((\Dsingdg(R))$ 
coincide.

\item The map $\a$ defined in \eqref{alphadef} is an isomorphism of complex vector spaces that  induces an isomorphism 
$$
 H^{n}_{\prim}(X, \Q(\novertwo)) \xra{\cong} \pHS(\Dsingdg(R)) 
$$
of pre-Hodge structures (see Remark \ref{rem71}). In particular, the pre-Hodge structure $\pHS(\Dsingdg(R))$ is pure of weight $0$. 

\end{enumerate}
\end{thm}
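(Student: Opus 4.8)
The plan is to prove the three parts of Theorem~\ref{rephrase} in the order that reflects their logical dependency, deferring the hardest analytic input — the identification of Hodge filtrations — to a dedicated argument via the polar filtration.

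\smallskip

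\noindent\textbf{Part (1): Properties \ref{assumptions}.} First I would address finiteness of $HP_0(\Dsingdg(R))$ and the exhaustiveness/boundedness of the nc filtration. The key tool is the short exact sequence $0 \to HP_0(\C) \to HP_0^{\BM}(R) \to HP_0(\Dsingdg(R)) \to 0$ of Proposition~\ref{HPsing}, combined with the isomorphisms of Proposition~\ref{point} that identify $HP_0^{\BM}(R) \cong HP_0^{\BM}(W) \cong HP_0(W) \cong H^{\even}(X;\C)$. Since $H^{\even}(X;\C)$ is finite-dimensional, so is $HP_0(\Dsingdg(R))$. For Property (2) I would need to know that the negative cyclic homology $HN_*(\Dsingdg(R))$, which fits into analogous localization sequences, gives a filtration that is bounded; here one uses that $\Dsingdg(R)$ is smooth over $\C$ (via \cite{lunts}, \cite{keller11}) and that, although it is not proper as a $\Z$-graded category, its Hochschild homology is still concentrated in finitely many degrees because the same holds for $R$ and for $\C$ in the relevant localization triangles — this is precisely the content promised by the phrase ``the Hochschild invariants of $\Dsingdg(R)$ have the necessary features.'' Property (3) will follow once Part (3) is established, since $\a$ identifies the image of $\ch^\top$ with the rational structure on $H^n_\prim(X;\C)$, which spans over $\C$. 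One could also invoke that the Lattice Conjecture is known in this setting (cf.\ \cite{BSLattice}), but I would prefer the self-contained route through Part (3).

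\smallskip

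\noindent\textbf{Part (2): commutativity and comparison of images.} The commutativity of the diagram~\eqref{thediagram} is a naturality statement: each vertical arrow is built from pullback along $W \into \Spec(R)$, the fibration $W \to X$, and the canonical functor $\Db(R) \to \Dsingdg(R)$, and both $\ch^\top$ and the canonical maps $K_0 \to K_0^\top$ are natural transformations of functors on dg-categories. So commutativity reduces to naturality of Blanc's topological Chern character and of his comparison isomorphism $KU^*(X) \cong K_*^\top(X)$, both established in \cite{blanc}. Surjectivity of the middle and rightmost vertical maps is immediate from Propositions~\ref{HPsing} and \ref{point}: each is a composite of isomorphisms followed by the surjection coming from the short exact sequences. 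The equality of the images of $K_0(X)$ and $K_0(\Dsingdg(R))$ in $HP_0(\Dsingdg(R))$ requires showing that the map $K_0(\Dsingdg(R)) \to HP_0(\Dsingdg(R))$ factors (after rationalization) through the image of $K_0(X)_\Q$; this follows because $G_0(R) \to K_0(\Dsingdg(R))$ is surjective (d\'evissage plus the localization sequence for $K$-theory), $K_0(X)_\Q \to G_0(R)_\Q$ is surjective modulo the class supported at $\fm$, and that supported class dies in $\Dsingdg(R)$ by the same $G_0(R/\fm) \to G_0(R)$ vanishing used in Proposition~\ref{point}.

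\smallskip

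\noindent\textbf{Part (3): $\a$ is an isomorphism of pre-Hodge structures.} That $\a$ is a $\C$-linear isomorphism follows from chasing the short exact sequences: \eqref{HPcomp} realizes $HP_0(\Dsingdg(R))$ as the cokernel of $HP_0(\C) = H^0(\pt;\C) \to H^{\even}(X;\C)$, i.e.\ as reduced even cohomology modulo the hyperplane classes; by the Hard Lefschetz description in Example~\ref{primitive}, $H^{\even}(X;\C)$ decomposes as the span of powers of $L$ together with $H^n_\prim(X;\C)$ in degree $n$, and the subspace coming from $\PP^{n+1}$ is exactly the $L$-powers plus the class of a point, so the cokernel is precisely $H^n_\prim(X;\C)$, with $\a$ the induced map. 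Matching the rational structures is then the statement that the image of $\ch^\top$ on the category side corresponds to $H^n_\prim(X;\Q)$, which follows from Part (2) together with the classical fact that $\ch^\top\colon KU^0(X)_\Q \to H^{\even}(X;\Q)$ is an isomorphism onto the rational cohomology. The genuine difficulty, and the step I expect to occupy the bulk of the argument, is matching the \emph{filtrations}: showing $\a(F^{p+n/2}H^n_\prim(X;\C)) = F^p_{\nc}HP_0(\Dsingdg(R))$, equivalently that the nc Hodge filtration corresponds to the classical Hodge filtration after the Tate twist by $n/2$. My approach, as foreshadowed in the introduction, is to interpose the polar filtration on $H^n(U;\C)$, where $U = \PP^{n+1}\setminus X$: on one side, the classical Hodge filtration on $H^n_\prim(X)$ is related to the polar filtration on $H^n(U)$ by the residue isomorphism and Griffiths' theory (with an appropriate shift); on the other side, the nc Hodge filtration on $HP_0(\Dsingdg(R))$, which is defined via the image of $HN_*$, is computed using the de Rham models for the Hochschild/negative cyclic/periodic complexes of $\mf(f)$ recalled in Section~\ref{sec:tech}, and the explicit boundary map in the d\'evissage long exact sequence from \cite{devissage}. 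Concretely, I would show that under the chain of identifications $HP_0(\Dsingdg(R)) \cong HP_0^{\BM}(R)/HP_0(\C) \cong H^{\even}(X)/(\text{$\PP$-part})$, the filtration $F^\bullet_{\nc}$ transports to the polar filtration on $H^n(U)$ via the d\'evissage boundary map, and that the latter in turn transports to $F^{\bullet + n/2}$ on $H^n_\prim(X)$ via the residue. The main obstacle is bookkeeping the degree shifts: the periodic cyclic complex mixes all cohomological degrees, the de Rham model introduces a grading shift, the Tate twist by $n/2$ must be tracked, and the polar vs.\ Hodge filtration comparison itself carries a shift; getting all of these to align is where the care is needed, and it is exactly what Section~\ref{mainsection} is devoted to.
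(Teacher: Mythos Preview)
Your overall architecture, especially for Part~(3), matches the paper's: the identification of filtrations does go through the polar filtration on $H^{n+1}(U)$, with Griffiths' theorem on one side and the explicit d\'evissage boundary formula (Theorem~\ref{deRhamModels}) on the de Rham models on the other; this is the content of Lemma~\ref{lem727}. The paper also, like you, derives Properties~\ref{assumptions}(1)--(2) from Part~(3) rather than independently.

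There is, however, a genuine gap in your Part~(2). Your claim that $G_0(R) = K_0(\Db(R)) \to K_0(\Dsingdg(R))$ is surjective is \emph{false} in general: the localization sequence continues as
\[
K_0(\Db(R)) \to K_0(\Dsingdg(R)) \to K_{-1}(R),
\]
and $K_{-1}(R)$ is typically nonzero for a graded hypersurface (the paper cites \cite{CHWW} for this). The paper explicitly flags this and repairs it by an $\A^1$-homotopy argument: one shows $\ov{K_{-1}(R)} = 0$ (with $\ov{(-)}$ the $\A^1$-coinvariants) using that $R$ is standard graded, and since $K^\top$ is $\A^1$-invariant, the images of $K_0(\Db(R))$ and $K_0(\Dsingdg(R))$ in $K_0^\top(\Dsingdg(R))$ coincide, which is what is actually required.

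A second issue: the surjectivity of the vertical maps \eqref{Ktopcomposition} and \eqref{HPcomp} is not immediate from Propositions~\ref{point} and~\ref{HPsing}. Those handle the steps $E_0(W) \xla{\cong} E_0(\Db(R)) \onto E_0(\Dsingdg(R))$, but the first leg $p^*: E_0(X) \to E_0(W)$ along the $\C^*$-fibration needs its own argument. The paper supplies one by blowing up $\Spec(R)$ at $\fm$ to obtain a line bundle $\pi: Y \to X$ with $W = Y \setminus X$; then $\pi^*$ is an isomorphism and d\'evissage gives $E_0(Y) \onto E_0(W)$ because $E_{-1}(X) = 0$. Relatedly, your identification $HP_0(W) \cong H^{\even}(X;\C)$ (used in both Parts~(1) and~(3)) is not correct: $W \to X$ is a $\C^*$-bundle, not a homotopy equivalence, so $H^*(W) \ne H^*(X)$. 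The paper sidesteps this by proving that $\alpha$ is an isomorphism via the alternative description in Lemma~\ref{lem25a}, routing through $HP_1(U)$ and $HP_1(V)$ rather than through $HP_0(W)$.
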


As an immediate consequence of Theorem \ref{rephrase}, we have:

\begin{cor}
\label{hodgecondition}
The dg-category $\Dsingdg(R)$ satisfies the nc Hodge condition if and only if the Hodge conjecture holds for $X$.
\end{cor}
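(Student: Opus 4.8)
The statement to prove is Corollary~\ref{hodgecondition}: that $\Dsingdg(R)$ satisfies the nc Hodge condition if and only if the Hodge conjecture holds for $X$. The plan is to deduce this from Theorem~\ref{rephrase}, treating Theorem~\ref{rephrase} as already established. The key is to transport the relevant images and subspaces across the isomorphism $\a$ and the surjections in diagram~\eqref{thediagram}, so that the equation ``$\im(\ch_{HP}) = \Hdg(\Dsingdg(R))$'' on the singularity-category side becomes the equation ``$\im(\ch) = \Hdg(X)$'' for primitive cohomology, which is the Hodge conjecture for $X$ (the non-primitive part being automatically algebraic by the remarks in Example~\ref{primitive}).

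First I would assemble the comparison of Hodge data. By Theorem~\ref{rephrase}(3), $\a$ induces an isomorphism of pre-Hodge structures $H^n_{\prim}(X,\Q(\novertwo)) \xra{\cong} \pHS(\Dsingdg(R))$; hence $\a$ carries $\Hdg\big(H^n_{\prim}(X,\Q(\novertwo))\big) = H^n_{\prim}(X;\Q(\novertwo)) \cap F^0 H^n_{\prim}(X;\C(\novertwo))$ isomorphically onto $\Hdg(\Dsingdg(R))$. Unwinding the Tate twist, $F^0 H^n_{\prim}(X;\C(\novertwo)) = F^{\novertwo} H^n_{\prim}(X;\C)$, so the left-hand side is exactly $H^n_{\prim}(X;\Q) \cap F^{\novertwo}H^n(X;\C) = \Hdg^n_{\prim}(X) \coloneqq \Hdg(X) \cap H^n_{\prim}(X;\C)$, the primitive Hodge classes in degree $n$. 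Next I would match up the algebraic Chern characters: by Theorem~\ref{rephrase}(2), diagram~\eqref{thediagram} commutes, the right two vertical maps are surjective, and the images of $K_0(X)$ and $K_0(\Dsingdg(R))$ in $HP_0(\Dsingdg(R))$ agree. Composing the top row of~\eqref{thediagram} with the surjection~\eqref{HPcomp} (which, restricted appropriately, is $\a$ up to the identification $H^n_{\prim}(X;\C) \hookrightarrow H^{\even}(X;\C)$, per the construction of $\a$), one sees that $\im(\ch_{HP}\colon K_0(\Dsingdg(R))\to HP_0(\Dsingdg(R)))$ equals the image under~\eqref{HPcomp} of $\im(\ch\colon K_0(X)_\Q \to H^{\even}(X;\C))$. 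So the nc Hodge condition $\im(\ch_{HP}) = \Hdg(\Dsingdg(R))$ becomes, via $\a$, the assertion that the image of the classical Chern character $\ch\colon K_0(X)_\Q \to H^{\even}(X;\C)$, projected to the primitive part in degree $n$, equals $\Hdg^n_{\prim}(X)$.

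Then I would close the loop with the classical structure of $X$. Because $X$ is a smooth hypersurface of even dimension $n$, Example~\ref{primitive} gives $H^{2j}(X;\Q) = L^j \cdot H^0(\PP^{n+1};\Q(-j))$ for $2j \ne n$, and the decomposition $H^n(X;\Q) = H^n_{\prim}(X;\Q) \oplus L^{\novertwo} H^0(X;\Q(-\novertwo))$. All the ``Lefschetz'' summands are spanned by powers of the hyperplane class, hence are algebraic and account for all of $\Hdg(X)$ outside the primitive degree-$n$ part; the same Lefschetz classes lie in the image of $\ch$ (they are Chern characters of powers of $\OO_X(1)$). Consequently the Hodge conjecture for $X$ — namely $\im(\ch) = \Hdg(X)$ — holds if and only if the primitive-degree-$n$ components match, i.e. iff $\big(\text{primitive part in degree } n \text{ of } \im \ch\big) = \Hdg^n_{\prim}(X)$. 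Combining with the previous paragraph, this last equality is equivalent (via the isomorphism $\a$) to the nc Hodge condition for $\Dsingdg(R)$, completing the proof.

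The main obstacle, such as it is, is bookkeeping rather than mathematics: one must carefully check that the surjection~\eqref{HPcomp} restricted to $\im(\ch^{\top})$ and to the Hodge filtration really does correspond to $\a$ under the inclusion of primitive cohomology into even cohomology, so that ``chasing images through diagram~\eqref{thediagram}'' faithfully reflects the passage from $\ch$ on $X$ to $\ch_{HP}$ on $\Dsingdg(R)$; this amounts to verifying that the kernel of~\eqref{HPcomp} (identified in Proposition~\ref{HPsing} with $HP_0(\C)$, i.e. the non-primitive classes) is exactly the non-primitive part and is hit by $\im(\ch)$, which is immediate from the splitting in Proposition~\ref{HPsing} and the fact that $[\OO_X] \in K_0(X)$ maps to a generator. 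Once these identifications are in place, the corollary is a formal consequence of Theorem~\ref{rephrase}. I would therefore keep the write-up short, essentially: invoke Theorem~\ref{rephrase}(2)--(3), note $\a$ matches Hodge classes with primitive Hodge classes and algebraic classes with algebraic classes, and observe that the non-primitive part of $\Hdg(X)$ is always algebraic.
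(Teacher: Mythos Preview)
Your proposal is correct and follows essentially the same approach as the paper's proof: both use Theorem~\ref{rephrase}(3) to identify $\Hdg(\Dsingdg(R))$ with the primitive Hodge classes $\Hdg(H^n_\prim(X,\Q(\novertwo)))$, and Theorem~\ref{rephrase}(2) to identify the images of $K_0(X)$ and $K_0(\Dsingdg(R))$ in $HP_0(\Dsingdg(R))$, reducing the equivalence to a diagram chase. The only difference is that you spell out explicitly why the Hodge conjecture for $X$ is equivalent to its restriction to primitive cohomology (the non-primitive summands being generated by powers of the hyperplane class), whereas the paper leaves this implicit in the phrase ``the Hodge conjecture for $X$ is the assertion that the top horizontal map is onto.''
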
 

\begin{proof} Theorem~\ref{rephrase}  gives the commutative square
$$
\begin{tikzcd}
  K_0(X)_\Q  \ar[r] \ar[d] &  \Hdg(H^n_\prim(X, \Q(\novertwo)) \ar[d, "\cong"] \\
  K_0(\Dsingdg(R))_\Q \ar[r] &  \Hdg(HP_0(\Dsingdg(R)), 
  \end{tikzcd}
$$
where the right vertical map is an isomorphism. The Hodge conjecture for $X$ (resp. nc Hodge condition for $\Dsingdg(R)$) is the assertion 
that the top (resp. bottom) horizontal map in this square is onto. Clearly, the Hodge conjecture for $X$ implies the nc Hodge condition for 
$\Dsingdg(R)$. The converse holds since Theorem~\ref{rephrase} also gives that the images of 
$K_0(X)_\Q$ and
$K_0(\Dsingdg(R))_\Q$ in $\Hdg(HP_0(\Dsingdg(R)) \subseteq HP_0(\Dsingdg(R)$ coincide.
\end{proof}

\begin{rem}
\label{rem:orlov}
The Hodge conjecture for $X$ is also equivalent to the nc Hodge condition for the \emph{graded} singularity category of $R$, i.e. the dg-quotient $\on{D}^{\on{sg}}_{\on{gr}}(R)$ of the bounded derived category of $\Z$-graded $R$-modules by its subcategory of perfect complexes. Indeed, this is nearly immediate from a result of Orlov \cite[Theorem 2.5]{orlovcoh}. Orlov's Theorem also implies that, when $X$ is Calabi-Yau, there is 
an equivalence of categories $\Db(X) \simeq \on{D}^{\on{sg}}_{\on{gr}}(R)$ and hence
an isomorphism $H^{\on{even}}(X ; \C) \cong HP_0(\on{D}^{\on{sg}}_{\on{gr}}(R))$ that preserves Hodge structures. However, while the categories $\Dsingdg(R)$ and $\on{D}^{\on{sg}}_{\on{gr}}(R)$ are closely related, 
we do not see a way to deduce  Theorem~\ref{rephrase} from these results concerning $\on{D}^{\on{sg}}_{\on{gr}}(R)$. In a bit more detail: there is a canonical functor $\on{D}^{\on{sg}}_{\on{gr}}(R) \to \Dsingdg(R)$ given by forgetting the grading, and in fact, by~\cite[Theorem 1.5]{tabuada} and \cite[Proposition A.8]{KMV}, there is a distinguished triangle
\begin{equation}
\label{tabuada}
E(\on{D}^{\on{sg}}_{\on{gr}}(R)) \to E(\on{D}^{\on{sg}}_{\on{gr}}(R)) \to E(\Dsingdg(R)) \to 
\end{equation}
for any localizing, $\A^1$-homotopy invariant $E$
of dg-categories taking values in a triangulated category. The middle map in \eqref{tabuada} is the canonical functor, and the first map is induced by the endofunctor $T - \id$ of $\on{D}^{\on{sg}}_{\on{gr}}(R)$, where $T$
denotes the grading twist by 1. However, the Hodge structure on a dg-category involves negative cyclic homology, which is not an $\A^1$-homotopy invariant; the triangle \eqref{tabuada} is therefore ultimately not useful for studying the Hodge structure of~$\Dsingdg(R)$.
\end{rem}

\section{Some intermediate results}
\label{sec:tech}

Before we embark on the proof of Theorem~\ref{rephrase}, we need some intermediate results of a technical nature.  Throughout this section, we let $k$ be a field of characteristic 0, $Q$ a smooth $k$-algebra, 
and $R$ a hypersurface ring of the form $Q/(f)$ for a non-zero-divisor $f \in Q$. We recall in this section the interpretation of  $\Dsingdg(R)$ as a dg-category of matrix factorizations of $f$, and also the ``de Rham models'' for $HH$, $HN$
and $HP$ of the latter. We also recall an explicit description of a certain boundary map occurring in a long exact d\'evissage sequence for $HP$.

\subsection{Matrix factorizations} \label{ss:mf}
A \emph{matrix factorization of $f$} is a finitely generated, $\Z/2$-graded projective  $Q$-module $F = F_{\ov{0}} \oplus F_{\ov{1}}$ equipped with an odd degree endomorphism $\del$ such that $\del^2$ coincides with
multiplication by $f$. Matrix factorizations were introduced by Eisenbud~\cite{eisenbud} in his study of maximal Cohen-Macaulay modules over hypersurface rings. Matrix factorizations of $f$ form the objects of a differential $\Z/2$-graded category $\mf(f)$, whose morphisms are the $\Z/2$-graded complexes
$
\Hom_{\mf}((F, \del), (F', \del')) \coloneqq \Hom_Q(F, F')$ with differential sending a homogeneous map $\alpha$ of degree $\overline{i} \in \Z/2$ to $\del' \alpha - (-1)^{i} \alpha \del$.
If $F$ is free, then $F_{\ov{0}}$ and $F_{\ov{1}}$ necessarily have the same rank, and so we may view $\del$ as a block matrix $\begin{pmatrix} 0 & A \\ B & 0\end{pmatrix}$, where $A$ and $B$ are square matrices such that $AB = BA = f \cdot \id$. In this case, we will sometimes denote matrix factorizations as pairs $(A, B)$.

By ``unfolding'' the $\Z/2$-grading, it is also possible to interpret $\mf(f)$ as a classical differential $\Z$-graded category, and we will use both points of view in this paper.
To keep this straight, it is useful to introduce a formal indeterminate $t$ of homological degree $-2$, and to identify $\Z/2$-graded vector spaces with $\Z$-graded modules over $k[t,
t^{-1}]$: given a $\Z/2$-graded $k$-vector space $V = V_{\ov{0}} \oplus V_{\ov{1}}$, the associated graded $k[t,t^{-1}]$-module is $\cdots \oplus V_{\ov{0}}t^{-1} \oplus V_{\ov{1}}t^{-1} \oplus
V_{\ov{0}} \oplus V_{\ov{1}} \oplus V_{\ov{0}} t \oplus V_{\ov{1}} t \oplus  \cdots$, and the inverse procedure is given by setting $t = 1$ and taking degrees modulo $2$. 
Using this identification, the {\em unfolding} of a $\Z/2$-graded vector space is restriction of scalars along $k \to k[t, t^{-1}]$. (There is also a ``folding'' procedure, given by extension of
scalars along this map, but it will not arise in this paper.) 

From this point of view, an object of $\mf(f)$ becomes a finitely generated $\Z$-graded projective module over the graded ring $Q[t, t^{-1}]$,
equipped with a $Q[t, t^{-1}]$-linear differential of degree $-1$  whose square
equals $ft$.  When we think of $\mf(f)$ as a $\Z/2$-graded dg-category, we are regarding it as a $k[t, t^{-1}]$-linear
dg-category, and its unfolding amounts to regarding it as merely a $k$-linear dg-category.

A key result used throughout this paper is that there is a quasi-equivalence of $Q$-linear ($\Z$-graded) dg-categories 
\begin{equation}
\label{BE}
\mf(f) \xra{\simeq} \Dsingdg(R).
\end{equation}
This follows from a combination of Theorems of Buchweitz and Eisenbud (\cite[Theorem 4.4.1]{buchweitz},~\cite[Corollary 6.3]{eisenbud}). Unlike $\mf(f)$, the dg-category
$\Dsingdg(R)$ cannot directly be realized as the unfolding of a $\Z/2$-graded dg-category. 

\begin{notation}
\label{nota:superscript}
Let $\cC$ be a $k[t, t^{-1}]$-linear dg-category, where $t$ has degree $-2$ (i.e. a $k$-linear differential $\Z/2$-graded category). We write $HN(\cC)$ (resp. $HN^{\Z/2}(\cC))$ for its Hochschild homology relative to $k$ (resp. $k[t, t^{-1}]$), and similarly for $HP$ and $HH$. 
\end{notation}

\subsection{de Rham models for Hochschild, negative cyclic, and periodic cyclic homology}

We will make use of explicit de Rham-type models for Hochschild, negative cyclic, and periodic cyclic homology of  $\mf(f)$ and related categories.
We begin with a technical point: 

\subsubsection{Adjoining a power series variable to a graded ring}
For a (homologically) $\Z$-graded vector space $W$, let $u$ be an indeterminate of degree $-2$, so that $W[u]$, and hence also $W[u]/u^m$, is $\Z$-graded. We set
$$
W [[ u ]] \ce \lim_m W[u]/u^m,
$$
where, importantly, the inverse limit is taken in the category of $\Z$-graded vector spaces. Thus, $W [[ u ]]$ is graded, and
for each $d$, its degree $d$ part is the subspace
$$
W [[ u ]]_d = \left\{\sum_{i \geq 0} v_i u^i \text{ : } v_i \in W_{2i+d}\right\}
$$
of the collection of  all power series with $W$ coefficients. 
Note that if $W$ is concentrated in degree $0$ (or, more generally, if $W_m = 0$ for $m \gg 0$), then $W [[ u ]]$ is really just a polynomial ring with $W$ coefficients. 
For instance, if we regard $k$ itself as being $\Z$-graded but concentrated in degree $0$, then the above definition of $k[[u]]$ yields $k[u]$. 
We will stick with the traditional notation $k[u]$ in this case.

The following is easily verified:

\begin{lem}
\label{lem:technical} Suppose $W$ is a $\Z$-graded $k$-vector space with only a finite number of nonzero degree components, and let $t$ be a degree $-2$ indeterminate.
  We have an identity 
  $$
  W[t, t^{-1}] [[ u ]] = W [[ v ]][t, t^{-1}],
  $$
  where $v \coloneqq ut^{-1}$, and $W [[ v ]]$ denotes all power series with $W$ coefficients in the degree $0$ variable $v$. 
\end{lem}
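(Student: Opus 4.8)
The plan is to unwind both sides as explicit sets of formal power series in the relevant degree-zero-ish variables, with coefficients from $W$, and check that the two descriptions literally agree once one records the homological degrees carefully. First I would fix the conventions: $t$ has degree $-2$, $u$ has degree $-2$, so $v = ut^{-1}$ has degree $0$; thus multiplication by $v$ preserves the $\Z$-grading, and $W[[v]]$ is an honest power series ring over the graded vector space $W$ with no completion subtleties beyond the usual one. The hypothesis that $W$ has only finitely many nonzero graded pieces is exactly what guarantees that power series in $v$ (degree $0$) make sense as a graded object in each degree — in degree $d$ one only sees finitely many monomials $w v^i$ with $w \in W_d$, so $W[[v]]_d = W_d$ is finite-dimensional, and no genuine limit is needed. (Indeed $W[[v]] = W[v]$ as graded vector spaces under this hypothesis, though phrasing it as a power series ring is harmless and matches the intended use.)

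Next I would spell out the left-hand side. Set $A \coloneqq W[t,t^{-1}]$, which in degree $d$ is $\bigoplus_{j \in \Z} W_{d+2j}\, t^{-j}$ — a finite sum by the finiteness hypothesis on $W$. Then by definition $A[[u]]_d = \{\sum_{i \ge 0} a_i u^i : a_i \in A_{d+2i}\}$. Substituting the formula for $A_{d+2i}$, an element of $A[[u]]_d$ is a collection of terms $w_{i,j}\, t^{-j} u^i$ with $w_{i,j} \in W_{d+2i+2j}$, for all $i \ge 0$ and all $j \in \Z$, subject to: for each fixed $i$ only finitely many $j$ occur (since $A_{d+2i}$ is a finite sum). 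On the right-hand side, $W[[v]]_e = \{\sum_{i \ge 0} w_i v^i : w_i \in W_{e}\}$, but since $W$ has finitely many graded pieces this is just $W_e$ (finitely many terms); then $(W[[v]][t,t^{-1}])_d = \bigoplus_{j} (W[[v]])_{d+2j}\, t^{-j} = \bigoplus_j W_{d+2j}\,t^{-j}$ with finitely many $j$. Now I rewrite a general element of the left side by substituting $u = vt$: $w_{i,j}\, t^{-j} u^i = w_{i,j}\, t^{-j} v^i t^i = w_{i,j}\, v^i\, t^{i-j}$. Collecting by the power $t^{\ell}$ with $\ell = i - j$, and noting $w_{i,j} \in W_{d+2i+2j}= W_{(d - 2\ell) + 4i}$... hmm, I would recheck this degree bookkeeping: the cleaner route is to observe $w_{i,j} \in W_{d+2i+2j}$ and $d + 2i + 2j = (d + 2\ell') + \text{stuff}$ where I should match against the degree on the right, namely a term $w\, v^i\, t^\ell$ on the right has $w \in W_{d + 2\ell}$ (since $v$ is degree $0$ and $t^\ell$ is degree $-2\ell$... wait, $t$ has degree $-2$ so $t^\ell$ has degree $-2\ell$, hence $w$ must have degree $d + 2\ell$). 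On the left, $w_{i,j}\,v^i\,t^{i-j}$ sits in degree $d$ iff $\deg w_{i,j} - 2(i-j) = d$, i.e. $\deg w_{i,j} = d + 2i - 2j$; and indeed $d + 2i + 2j \ne d + 2i - 2j$ in general — so I must be careful that the original degree constraint on $A[[u]]$ reads $a_i \in A_{d + 2i}$ meaning $w_{i,j} t^{-j}$ has degree $d + 2i$, i.e. $\deg w_{i,j} - 2j = d + 2i$... no: $t^{-j}$ has degree $+2j$, so $\deg w_{i,j} + 2j = d + 2i$, giving $\deg w_{i,j} = d + 2i - 2j$. Good — that matches the right side with $\ell = i - j$. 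So the substitution $u = vt$ gives a degree-preserving bijection of monomials, and the only thing left is to match the finiteness/convergence conditions on which infinite collections of monomials are allowed.

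The main obstacle — and the only real content — is precisely this matching of the allowed infinite sums: on the left, "finitely many $j$ for each $i$, infinitely many $i$ allowed"; on the right, "finitely many $\ell$ total, with an independent power series in $v$ (hence, under the finiteness hypothesis on $W$, again only finitely many $i$) for each $\ell$." After the substitution $\ell = i - j$, I would argue that fixing $\ell$ and letting $i \to \infty$ forces $j = i - \ell \to \infty$, so $w_{i,j} \in W_{d + 2\ell}$ is a fixed finite-dimensional space; thus for each $\ell$ the terms with that $t$-power form a power series in $v$ with coefficients in $W_{d+2\ell}$, which is legitimate, and conversely a term of degree $d$ must have $t$-power $\ell$ with $\deg w_{i,j} = d + 2\ell$ bounded, hence $\ell$ bounded (finiteness of $W$), so only finitely many $\ell$ appear — exactly the right-hand description. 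Since everything is $W[t,t^{-1}]$-linear (equivalently $k[t,t^{-1}]$-linear) and respects products of monomials in the obvious way, the monomial bijection extends to a ring isomorphism, and being the identity on $W$ and compatible with the inclusion $t \mapsto t$, it is the asserted identity. I would present this as: "Expand both sides in the basis of monomials $w\,v^i\,t^\ell$; the substitution $u = vt$ identifies the two, and the finiteness hypothesis on $W$ ensures the convergence conditions coincide," leaving the degree arithmetic above as the routine verification.
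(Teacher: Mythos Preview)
The paper omits the proof entirely, labeling the lemma ``easily verified,'' so your degree-by-degree unwinding is exactly the intended approach, and your final matching paragraph is correct. However, your write-up contains an internal inconsistency that you should repair.

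Early on you assert that, because $v$ has degree $0$ and $W$ has bounded grading, ``$W[[v]]_d = W_d$ is finite-dimensional'' and hence ``$W[[v]] = W[v]$ as graded vector spaces.'' This is false: since $v$ has degree $0$, every monomial $w\,v^i$ with $w \in W_d$ lies in degree $d$, so $W[[v]]_d = W_d[[v]]$, the full space of formal power series with coefficients in $W_d$. The finiteness hypothesis on $W$ ensures only that there are finitely many $d$ with $W[[v]]_d \ne 0$, not that each such piece is finite-dimensional. If your early claim were correct, both sides of the lemma would collapse to the polynomial ring $W[t,t^{-1}][u]$, which is not what is being asserted --- the whole point (made explicit in the paper's subsequent example with $W = k$) is that $k[t,t^{-1}][[u]]$ is a genuine power-series ring $k[[v]][t,t^{-1}]$ in even degree.

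Fortunately, your later matching argument silently reverts to the correct description: you write that ``for each $\ell$ the terms with that $t$-power form a power series in $v$ with coefficients in $W_{d+2\ell}$,'' and you correctly observe that only finitely many $\ell$ occur because $W_{d+2\ell} = 0$ outside a bounded range. That paragraph is the actual proof. Simply delete the erroneous parenthetical about $W[[v]] = W[v]$, and replace the claim $(W[[v]][t,t^{-1}])_d = \bigoplus_j W_{d+2j}\, t^{-j}$ by the correct $\bigoplus_j W_{d+2j}[[v]]\, t^{-j}$; then the two descriptions visibly coincide under the substitution $u = vt$, $\ell = i - j$, and the argument is complete.
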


\begin{ex} \label{ex:technical} In particular, we have an identity $k[t,t^{-1}] [[ u ]] = k [[ v ]][t,t^{-1}]$, with $v \coloneqq ut^{-1}$ and $k[[ v ]]$
  the usual ring of power series. In other words, $k[t, t^{-1}][[u]]$ is a $\Z/2$-graded ring that is 0 in odd degree and a power series ring in even degree.
  We will use this identity frequently in this paper.   
\end{ex}

\begin{rem} The ring $k[t][[u]]$ may be identified with the $\Z$-graded polynomial ring $k[u, t]$, concentrated in negative, even degrees.
\end{rem}

\subsubsection{de Rham models}
\label{models}

Let $A$ be a smooth  $k$-algebra equipped with a $\Z$-grading (written homologically) such that $A_i = 0$ for all odd $i$,
and suppose $w \in A_{-2}$ is an element of degree~$-2$. We call such a pair $(A,w)$ a {\em smooth curved algebra}, and we define its {\em de Rham HN complex} to be 
$$
HN^{\deR}(A,w) \ce (\Omega^\bu_{A/k}[[u]], u d + \lambda_{dw}).
$$
Here, $u$ is an indeterminate of degree $-2$, and  $\Omega^\bu_A = \bigoplus_p \Omega^p_{A/k}$ is homologically graded
by declaring $|a_0 da_1 \cdots da_p| \ce p + \sum_i |a_i|$, where $|-|$ refers to the degree of a homogenous element.
In the formula for the differential, $u d + \lambda_{dw}$, the $d$ refers to the de Rham differential $d: \Omega^p_A \to \Omega^{p+1}_{A}$ (observe that it has
homological degree $+1$, so that $ud$ has degree $-1$) and $\lambda_{dw}$ refers to left multiplication by the degree $-1$ element $dw \in \Omega^1_A$.

We define the {\em de Rham HP complex} for the pair $(A,w)$ to be
$$
HP^{\deR}(A,w) \ce HN^{\deR}(A,w)[u^{-1}]  = (\Omega^\bu_{A/k}((u)), u d + \lambda_{dw}),
$$
where, in general, $W((u))$ is shorthand for $W[[u]][u^{-1}]$. The {\em de Rham HH complex} of $(A,w)$~is
$$
HH^{\deR}(A,w) \ce \frac{HN^{\deR}(A,w)}{u \cdot HN^{\deR}(A,w)} = (\Omega^\bu_A, \lambda_{dw}).
$$

If $A$ is a smooth $k[t, t^{-1}]$-algebra for a degree $-2$ indeterminate $t$ (i.e, a $\Z/2$-graded algebra),  we set
$$
HN^{\deR, \Z/2}(A, w) \ce (\Omega^\bu_{A/k[t, t^{-1}]}[[u]], u d + \lambda_{dw}).
$$
We also define $HP^{\deR, \Z/2}(A, w)$ and $HH^{\deR, \Z/2}(A, w)$ just as above. Observe that the complex $HN^{\deR, \Z/2}(A, w)$ is a dg-module over $k[t,t^{-1}][[u]] = k[[v]][t, t^{-1}]$ (see Example \ref{ex:technical}); that is, it is a
  differential $\Z/2$-graded module over the power series ring $k[[v]]$. When $A$ is a $\Z/2$-graded algebra, we can, and sometimes will, ignore $k[t, t^{-1}]$-linearity and consider the invariants $HN^{\deR}(A, w)$, $HP^{\deR}(A, w)$, and $HH^{\deR}(A, w)$ defined above.

If $A = A_0$ and $w = 0$, we write $HN^{\deR}(A, w)$ as just $HN^{\deR}(A)$, and if $Y = \Spec(A)$, we also write $HN^{\deR}(Y) \ce HN^{\deR}(A)$; we use the analogous notation for $HH$ and $HP$ as well. In this case, the classical Hochschild-Kostant-Rosenberg (HKR) isomorphism \cite[Theorem 3.4.4]{loday} induces quasi-isomorphisms
$$
HN(A) \xra{\simeq} HN^\deR(A), \quad HP(A) \xra{\simeq} HP^\deR(A), \quad \text{and} \quad HH(A) \xra{\simeq} HH^\deR(A),
$$
thus justifying the notation. More generally, for a smooth (ungraded) $k$-algebra  $Q$ and non-zero-divisor $f \in Q$, we may form the smooth curved algebras $(Q[t], ft)$ and $(Q[t, t^{-1}], ft)$.
For these, we have  the following HKR-type isomorphisms, which build on results in \cite{CT, PP, segal}:
\begin{thm} \label{thm:hkr} For a field $k$ of characteristic $0$, a smooth $k$-algebra $Q$, and a non-zero-divisor $f \in Q$,
  we have the following isomorphisms in the derived category of  dg-$Q[u]$-modules (in parts (2) and (3), we use Notation~\ref{nota:superscript}): 
\begin{enumerate}
\item $HN^{\BM}(Q/f)  \cong HN^{\deR}(Q[t], ft)$,
\item $HN(\Dsingdg(Q/f)) \cong HN(\mf(f)) \cong HN^{\deR}(Q[t,t^{-1}], ft)$, and 
\item $HN^{\Z/2}(\mf(f)) \cong HN^{\deR, \Z/2}(Q[t,t^{-1}], ft)$.
\end{enumerate}
\end{thm}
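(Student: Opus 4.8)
The plan is to deduce all three isomorphisms from one HKR-type computation for matrix factorizations of a smooth curved algebra, applied with two different base rings. First, the isomorphism $HN(\Dsingdg(Q/f)) \cong HN(\mf(f))$ in part (2) requires nothing new: $HN$ is invariant under quasi-equivalence and \eqref{BE} is $Q$-linear, so the isomorphism is automatically $Q[u]$-linear. Everything else reduces to computing $HN$ of matrix-factorization categories.

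The technical core is the HKR-type description of the cyclic invariants of $\mf(f)$, built on \cite{CT, PP, segal}, with \cite{PP} needed to handle ``Hochschild homology of the second kind'' (recall $f$ is only assumed a non-zero-divisor, not to cut out an isolated singularity). The statement I would isolate is: for a smooth curved algebra $(A, w)$, the mixed complex computing the Hochschild homology of the category of curved $A$-modules, relative to any base ring over which it is defined, is naturally quasi-isomorphic to the twisted de Rham mixed complex, with the Hochschild differential corresponding to the curved term $\lambda_{dw}$ and Connes' operator $B$ to the de Rham differential $d$; totalizing and completing in $u$ yields the corresponding $HN^{\deR}$. Applied to $(A,w) = (Q[t,t^{-1}], ft)$ with base ring $k[t,t^{-1}]$, this is exactly part (3); the bookkeeping here is to keep the $k[t,t^{-1}]$-linearity — equivalently, the ambient $\Z/2$-grading and the variable $t$ — straight, since the cited references work over $k$ directly.

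Parts (1) and (2) are then further specializations. Part (2)'s second isomorphism is the same statement for $(Q[t,t^{-1}], ft)$ with base ring $k$: the unfolded $k$-linear $\mf(f)$ is the curved-module category of the smooth curved $k$-algebra $(Q[t,t^{-1}], ft)$, so the HKR computation now produces $HN^{\deR}(Q[t,t^{-1}], ft)$ with the absolute de Rham complex and curving $d(ft) = t\,df + f\,dt$. One can instead deduce (2) from (3) by restriction of scalars along $k \to k[t,t^{-1}]$: the element $t \otimes 1 - 1 \otimes t$ is a non-zero-divisor on the relevant enveloping ring and acts by zero on diagonal bimodules, so a Koszul computation gives $HH(\mf(f)) \simeq HH^{\Z/2}(\mf(f)) \otimes_{k[t,t^{-1}]} \Omega^\bu_{k[t,t^{-1}]/k}$, compatibly with the $B$-operators, matching the splitting $\Omega^\bu_{Q[t,t^{-1}]/k} \cong \Omega^\bu_{Q[t,t^{-1}]/k[t,t^{-1}]} \otimes_{k[t,t^{-1}]} \Omega^\bu_{k[t,t^{-1}]/k}$. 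For part (1), I would identify $\Db(Q/f)$, $Q$-linearly and up to quasi-equivalence, with the curved-module category of the smooth curved $k$-algebra $(Q[t], ft)$ — a Koszul-duality statement, compatible with the d\'evissage identification of $HN^{\BM}(Q/f)$ with the negative cyclic homology of the dg-category of $Q$-complexes supported on $V(f)$ — and apply the same HKR computation to $(Q[t], ft)$. As a consistency check, inverting $t$ sends $(Q[t], ft)$ to $(Q[t,t^{-1}], ft)$ and recovers (2), matching the localization triangle $HN(Q/f) \to HN^{\BM}(Q/f) \to HN(\mf(f)) \to$ associated to $\Perf(Q/f) \to \Db(Q/f) \to \Dsingdg(Q/f)$.

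The step I expect to be the main obstacle is the precise matching of structures: identifying Connes' $B$ with the de Rham differential and the curved term with $\lambda_{df}$ with the right signs and degree shifts; controlling the $k[t,t^{-1}]$-linearity so that, in particular, inverting $t$ in part (1) commutes with the $u$-adic completion and with totalization; and checking that every quasi-isomorphism produced is $Q[u]$-linear, as the statement of the theorem demands.
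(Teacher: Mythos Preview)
The paper's proof is entirely by citation: part (1) is attributed to \cite[Proposition 2.16, Theorem 2.17]{devissage}, part (2) to \cite[Proposition 3.25, Theorem 3.31]{BW}, and part (3) to \cite{walker}. Your proposal is thus not competing with an argument in the paper but rather sketching the content behind those references, and in that respect it is broadly accurate. The HKR-type identification of the mixed complex of a curved-module category with the twisted de Rham mixed complex (Hochschild differential $\leftrightarrow \lambda_{dw}$, Connes' $B \leftrightarrow d$), going back to \cite{CT, segal} and extended beyond the isolated-singularity case via Hochschild homology of the second kind \cite{PP}, is indeed the common engine; applying it over the base $k[t,t^{-1}]$ gives (3), and over $k$ gives the second isomorphism in (2). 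Your alternative derivation of (2) from (3) via the Koszul argument on $t \otimes 1 - 1 \otimes t$ is also sound.

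The step in your sketch that is most delicate is part (1). The categorical equivalence you invoke --- $\Db(Q/f)$ as the curved-module category of $(Q[t], ft)$ with $|t| = -2$ --- is a linear Koszul duality statement that is correct once boundedness and finiteness conditions on the curved modules are pinned down, but it is not a one-liner, and \cite{devissage} does not proceed exactly this way: it works more directly with a dg-model for $Q$-complexes supported on $V(f)$ and establishes the de Rham description at the level of mixed complexes. Your parenthetical remark already gestures at this route, and your consistency check via inverting $t$ against the localization triangle is correct and matches how the references are organized.
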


\begin{rem} The third isomorphism is a map of graded $Q[t,t^{-1}][[u]] = Q[[v]][t, t^{-1}]$-modules, or, equivalently, of $\Z/2$-graded $Q[[v]]$-modules.
\end{rem}

\begin{rem}  \label{rem:hkr}
  The isomorphisms in this theorem imply the analogous results involving both Hochschild and periodic cyclic homology upon modding out by and inverting $u$, respectively. 
\end{rem}

\begin{proof}[Proof of Theorem~\ref{thm:hkr}]
Part (1) follows from \cite[Proposition 2.16, Theorem 2.17]{devissage}. Parts (2) and (3) follow from \cite[Proposition 3.25, Theorem 3.31]{BW} and \cite{walker}, respectively.
\end{proof}

\subsection{An explicit calculation of a boundary map}
\label{sec:boundary}
Suppose $Z \into X$ is a closed immersion  of schemes of finite type over $\C$.
Let $\DbZ(X)$ denote the full dg-subcategory of $\Db(X)$ consisting of objects whose supports are contained in $Z$. 
We set $HP^{\BM, Z}(X) \ce HP(\DbZ(X))$. As already noted in the proof of Proposition~\ref{point}, a result of Khan~\cite[Theorem A.2]{khan} (see also \cite[Theorem 1.2]{devissage}) implies that the induced map
$$
HP^{\BM}(Z) \to HP^{\BM, Z}(X) 
$$
is a quasi-isomorphism: that is, $HP^{\BM}$ satisfies the d\'evissage property. By a result of Keller~\cite{kellercyclic}, we have a distinguished triangle  
$$
HP^{\BM, Z}(X) \to HP^{\BM}(X) \to HP^{\BM}(X \setminus Z) \to
$$
of dg-$k[u, u^{-1}]$-modules; see \cite[Section 2]{devissage} for details. Combining this with the d\'evissage property, we obtain 
a distinguished triangle 
\begin{equation}
\label{eqn:HPdevtri}
HP^{\BM}(Z) \to HP^{\BM}(X) \to HP^{\BM}(X \setminus Z) \to.
\end{equation}

\begin{rem}
\label{rem:Ktopdev}
By a result of Blanc, topological $K$-theory of dg-categories is a localizing invariant (see e.g. \cite[Theorem 2.6]{devissage}). Moreover, topological $K$-theory satisfies d\'evissage \cite[Example 2.3]{HLP}. Thus, we have a distinguished triangle 
\begin{equation}
\label{eqn:Kdevtri}
G^{\top}(Z) \to G^{\top}(X) \to G^{\top}(X \setminus Z) \to
\end{equation}
of spectra analogous to \eqref{eqn:HPdevtri}. We will make use of this in the proof of Theorem~\ref{rephrase}. 
\end{rem}

We apply the distinguished triangle \eqref{eqn:HPdevtri} in the following special case: for a smooth $k$-algebra $Q$ and a non-zero-divisor $f \in Q$, we obtain
a long exact sequence
\begin{equation} 
\label{E29}
\cdots \to HP_1(Q) \to HP_1(Q[1/f]) \xra{\del} HP^{\BM}_0(Q/f) \to HP_0(Q) \to \cdots.
\end{equation}
Using \cite[Theorem 5.5]{devissage}, one may explicitly
describe the boundary map $\del$ in terms of the de Rham models for periodic cyclic homology discussed in \ref{models}.
Before recalling this result, we note that every element of  $HP^\deR_1(Q[1/f])$ is represented by a finite sum of classes of the form $\frac{\a}{f^s} u^{\tfrac{p-1}2}$ for some integer $s \geq 1$
and some class $\a \in \Omega^p_Q$, with $p$ odd, such that $f d\a = s
df \a$ \cite[Lemma 5.3]{devissage}.

  \begin{thm}[\cite{devissage} Theorem 5.5] \label{deRhamModels}
    The boundary map
    $$
    \del \co HP_1(Q[1/f])\to HP^{\BM}_0(Q/f)
    $$
    in~\eqref{E29} corresponds, via the isomorphisms relating its source and target to their de Rham models,
    to the map
    $$
    \del^\deR: HP_1^{\deR}(Q[1/f]) \to HP^{\deR}_0(Q[t], ft)
    $$
    given by
$$
\del^\deR\left(\frac{\a}{f^s} u^{\tfrac{p-1}2}\right) = 
\frac{(-1)^{s}}{s!} d(\a t^s) u^{\frac{p+1}{2} - s}.
$$
\end{thm}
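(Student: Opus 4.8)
The plan is to realize the d\'evissage localization sequence~\eqref{E29} by an explicit short exact sequence of de Rham--type complexes, read off the boundary map from the resulting long exact homology sequence, and then transport the answer through the de Rham model of $HP^{\BM}(Q/f)$ supplied by Theorem~\ref{thm:hkr}(1).

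\emph{Step 1: a chain-level model for the localization triangle.} Since $Q$ is smooth and $f$ is a non-zero-divisor, each $\Omega^p_{Q/k}$ is a projective $Q$-module, hence $f$-torsion-free, so the localization map $\Omega^\bu_{Q/k} \to \Omega^\bu_{Q/k}[1/f] = \Omega^\bu_{Q[1/f]/k}$ is \emph{injective}. Adjoining $u$ and using the functoriality of HKR for the localization $Q \to Q[1/f]$, the map $HP(Q) \to HP(Q[1/f])$ is modeled by the honest inclusion of complexes $\bigl(\Omega^\bu_{Q/k}((u)),\, ud\bigr) \hookrightarrow \bigl(\Omega^\bu_{Q[1/f]/k}((u)),\, ud\bigr)$. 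Combining Keller's localization triangle with the Khan--d\'evissage quasi-isomorphism $HP^{\BM}(Q/f) \xra{\simeq} HP^{\BM, Z}(\Spec Q)$ (for $Z = V(f)$) recalled in \S\ref{sec:boundary}, we conclude that $HP^{\BM}(Q/f)$ is quasi-isomorphic to a shift of the cokernel complex $\cF^\bu \ce \bigl((\Omega^\bu_{Q[1/f]/k}/\Omega^\bu_{Q/k})((u)),\, ud\bigr)$ in such a way that $HP^{\BM}_0(Q/f) \cong H_1(\cF^\bu)$, and the sequence~\eqref{E29} is identified with the long exact homology sequence of $0 \to \Omega^\bu_{Q/k}((u)) \to \Omega^\bu_{Q[1/f]/k}((u)) \to \cF^\bu \to 0$. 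Matching the two long exact sequences, the boundary map $\del$ becomes the map induced on homology by the projection $\Omega^\bu_{Q[1/f]/k}((u)) \twoheadrightarrow \cF^\bu$: a degree-$1$ de Rham cycle $\tfrac{\a}{f^s}\,u^{(p-1)/2}$ (with $\a \in \Omega^p_Q$, $p$ odd, $f\,d\a = s\,df\wedge\a$, the normal form of \cite[Lemma 5.3]{devissage}) is sent to its own class modulo $\Omega^\bu_{Q/k}((u))$ in $H_1(\cF^\bu) \cong HP^{\BM}_0(Q/f)$.

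\emph{Step 2: making the de Rham model explicit.} It remains to compute the image of that class under the isomorphism $HP^{\BM}(Q/f) \cong HP^{\deR}(Q[t], ft)$ of Theorem~\ref{thm:hkr}(1). This is where the real work lies, and it is the step I expect to be the main obstacle: Theorem~\ref{thm:hkr}(1) only asserts an isomorphism in the derived category of dg-$Q[u]$-modules, so one must pin it down on the relevant classes. I would do this by writing down the $Q((u))$-linear candidate chain map
$$
\phi\Bigl(\tfrac{\a}{f^s}\,u^{j}\Bigr) \ce \tfrac{(-1)^{s}}{s!}\, d(\a t^{s})\, u^{\,j+1-s}
$$
from $\cF^\bu$ to $HP^{\deR}(Q[t], ft) = \bigl(\Omega^\bu_{Q[t]/k}((u)),\, ud + \lambda_{d(ft)}\bigr)$, and verifying: (i) $\phi$ is well defined, i.e.\ insensitive to the rewriting $\tfrac{\a}{f^s} = \tfrac{f\a}{f^{s+1}}$ --- this is exactly what forces the coefficient $\tfrac{(-1)^s}{s!}$, and it uses the curved term, split as $\lambda_{d(ft)} = \lambda_{t\,df} + \lambda_{f\,dt}$, to absorb the discrepancy into a boundary; (ii) $\phi$ is a chain map, using $d^2 = 0$ (which annihilates the $d\a/f^s$ summand of $d(\tfrac{\a}{f^s})$) together with the same splitting; and (iii) $\phi$ is a quasi-isomorphism and agrees with the isomorphism of Theorem~\ref{thm:hkr}(1) --- for the latter I would either filter both complexes by order of pole along $f$ / powers of $t$ and compare associated gradeds, or use $Q[u]$-linearity together with the fact that after inverting $f$ everything collapses to the HKR identification for $Q[1/f]$ (and that the comparison of Theorem~\ref{thm:hkr}(1) is, by its construction in \cite{devissage}, compatible with pushforward along $V(f)\hookrightarrow\Spec Q$ and with localization) to identify the two maps uniquely. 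Composing $\phi$ with the description of $\del$ from Step~1 then gives precisely $\del^\deR\bigl(\tfrac{\a}{f^s}\,u^{(p-1)/2}\bigr) = \tfrac{(-1)^s}{s!}\,d(\a t^s)\,u^{(p+1)/2 - s}$.

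The only genuinely delicate points are bookkeeping. The sign and factorial $\tfrac{(-1)^s}{s!}$ are the familiar ones coming from iterating the pole-order-reduction identity $\partial\bigl(\tfrac{g}{f^s}\bigr) = \tfrac{\partial g}{f^s} - s\,\tfrac{g\,\partial f}{f^{s+1}}$ --- the affine-cone shadow of Griffiths' residue calculus that also underlies the polar filtration used later in the paper --- and the shift of $u$-degree by $-s$ encodes the interplay between the order of the pole along $f$ and the homological degree $-2$ of the formal variable $t$. Once the candidate $\phi$ is in hand, the checks (i)--(iii) are direct computations.
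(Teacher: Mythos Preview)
The paper does not prove this theorem: it is imported verbatim from \cite[Theorem~5.5]{devissage} (the citation is in the theorem header), and no proof is given here. So there is no in-paper argument to compare against; I can only assess your sketch on its own terms.

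Your overall architecture is the right one and matches what one expects of the argument in \cite{devissage}: model $HP(Q) \to HP(Q[1/f])$ as an honest inclusion of de Rham complexes, identify $HP^{\BM}(Q/f)$ with the (shifted) cokernel via d\'evissage, read off the boundary map as the projection, and then transport through an explicit chain map $\phi$ to $HP^{\deR}(Q[t],ft)$. Your candidate $\phi$ and the checks (i)--(ii) are exactly the right computations, and your explanation of why the coefficient $\tfrac{(-1)^s}{s!}$ is forced is correct.

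The genuine gap is Step~2(iii), which you yourself flag as the main obstacle. The isomorphism of Theorem~\ref{thm:hkr}(1) is constructed in \cite{devissage} through a specific zig-zag of quasi-isomorphisms, and the content of the present theorem is that \emph{that particular} isomorphism is realized by your $\phi$. Your proposed verifications---filtering and comparing associated gradeds, or invoking $Q[u]$-linearity and behavior after inverting $f$---would at best show $\phi$ is \emph{a} quasi-isomorphism, not that it agrees with the one already fixed by Theorem~\ref{thm:hkr}(1). Distinct quasi-isomorphisms between the same objects can differ by a nontrivial automorphism of the target, and the exact normalization (including the sign and factorial) is sensitive to this. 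Closing the gap requires tracing the actual construction in \cite{devissage} and matching it to $\phi$ step by step; your parenthetical about compatibility with pushforward and localization is in the right spirit but is not an argument until those compatibilities are established for the specific zig-zag used there.
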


\subsection{Some calculations of Hochschild invariants of matrix factorization categories} \label{sec:calculations}

Formulas for the Hochschild, negative cyclic, and periodic cyclic homology of $\mf(f)$ relative to $k[t,t^{-1}]$ (as opposed to $k$) are well-known, due to work of Dyckerhoff~\cite{dyckerhoff}: 

\begin{thm}
\label{thm:dyckerhoff}
Let $k$ be a field of characteristic $0$, $Q$ a smooth $k$-algebra, and $f \in Q$ a non-zero divisor. Assume that the morphism $f: \Spec(Q) \to \A^1_k$ determined by $f$
has only one singular point, $\fm \in \Spec(Q)$, and it lies over the origin (i.e., $f \in \fm$). Set $d = \dim(Q_\fm)$, and
let $\Omega_f$ be the finite dimensional vector space $\frac{\Omega^d_{Q}}{df \cdot \Omega_{Q}^{d-1}}$. We will use Notation~\ref{nota:superscript}.

\begin{enumerate}
\item There is an isomorphism of $\Z/2$-graded $k$-vector spaces
$$
\Sigma^d \Omega_f \xra{\cong} HH^{\Z/2}_*(\mf(f)),
$$
where $d$ is considered mod 2. Moreover, under the identification of $HH^{\Z/2}_*(\mf(f))$ with $HH^{\deR, \Z/2}_*(\mf(f))$ given by Theorem \ref{thm:hkr}, the isomorphism is
induced by the canonical inclusion $\Omega^d_Q \subseteq \Omega^\bu_Q$. 

\item $HN^{\Z/2}_*(\mf(f))$ is free of finite rank as a $\Z/2$-graded $k[[v]]$-module, 
  $HP^{\Z/2}_*(\mf(f))$ is a finite dimensional $\Z/2$-graded vector space over $k((v))$, and both are
  concentrated in degree $d \pmod{2}$. (Recall that  $v = t^{-1}u$, and $k((v)) \ce k[[v]][v^{-1}]$.) 
\end{enumerate}
\end{thm}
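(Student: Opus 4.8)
The plan is to reduce the entire computation, via the de Rham models of Theorem~\ref{thm:hkr}(3) (together with Remark~\ref{rem:hkr}, which recovers the $HH$ and $HP$ statements by reducing mod $u$ and inverting $u$), to an explicit Koszul-type complex built from $\Omega^\bu_{Q/k}$. First I would unwind the definitions of Section~\ref{models} in the case $(A,w) = (Q[t,t^{-1}], ft)$: here $\Omega^\bu_{Q[t,t^{-1}]/k[t,t^{-1}]} = \Omega^\bu_{Q/k}[t,t^{-1}]$, the curvature contributes $d(ft) = t\,df$ since $dt = 0$, and (since $\Omega^\bu_{Q/k}$ has only finitely many nonzero form-degree components) Lemma~\ref{lem:technical} gives $\Omega^\bu_{Q/k}[t,t^{-1}][[u]] = \Omega^\bu_{Q/k}[[v]][t,t^{-1}]$ with $v = ut^{-1}$. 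Identifying $\Z$-graded $k[t,t^{-1}]$-modules with $\Z/2$-graded $k$-vector spaces (i.e.\ setting $t=1$) then turns the three de Rham models into the $\Z/2$-graded complexes
$$HH^{\deR,\Z/2} = (\Omega^\bu_{Q/k},\ \lambda_{df}), \qquad HN^{\deR,\Z/2} = (\Omega^\bu_{Q/k}[[v]],\ v\,d + \lambda_{df}), \qquad HP^{\deR,\Z/2} = (\Omega^\bu_{Q/k}((v)),\ v\,d + \lambda_{df}),$$
where $\lambda_{df}$ is exterior multiplication by $df$, an operator of form degree $+1$ (hence odd), and $d$ is the de Rham differential of $Q/k$. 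Everything then comes down to the homology of these three complexes.

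For part~(1) I would analyze $(\Omega^\bu_{Q/k}, \lambda_{df})$ as follows. At any prime $\mathfrak q \ne \fm$ one has $\mathfrak q \notin V(J_f)$, so $df$ is nonzero in $\Omega^1_{Q/k} \otimes \kappa(\mathfrak q)$ and hence is a unimodular covector in the free module $\Omega^1_{Q,\mathfrak q}$; wedging with a unimodular element of an exterior algebra yields an exact complex, so $H^\bu(\Omega^\bu_{Q/k}, \lambda_{df})$ is supported at $\fm$. To identify it, localize at $\fm$ and choose a regular system of parameters $x_1,\dots,x_d$ for the regular local ring $Q_\fm$ (with $d = \dim Q_\fm$); then $dx_1,\dots,dx_d$ is an $\Omega^1_{Q,\fm}$-basis under which $(\Omega^\bu_{Q,\fm}, \lambda_{df})$ is exactly the Koszul cochain complex on the sequence $\partial f/\partial x_1,\dots,\partial f/\partial x_d$. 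The hypothesis $V(J_f) = \{\fm\}$ says this sequence is a system of parameters for the Cohen--Macaulay (indeed regular) ring $Q_\fm$, hence a regular sequence, so the Koszul cochain complex is acyclic except in top degree $d$, with cohomology $\Omega^d_{Q,\fm}/(df \wedge \Omega^{d-1}_{Q,\fm})$. Combining this with the support statement gives $H^\bu(\Omega^\bu_{Q/k}, \lambda_{df}) = \Omega_f$ concentrated in form degree $d$; passing to the $\Z/2$-grading, $HH^{\Z/2}_*(\mf(f)) \cong \Sigma^d \Omega_f$, and by construction this isomorphism is induced by the inclusion $\Omega^d_Q \subseteq \Omega^\bu_Q$, which proves~(1).

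For part~(2) I would feed part~(1) into the homological perturbation lemma. Over the field $k$, the complex $(\Omega^\bu_{Q/k}, \lambda_{df})$ is homotopy equivalent to its homology $\Omega_f$ (placed in form degree $d$, with zero differential); fix strong-deformation-retract data realizing this and extend it $v$-adically to $(\Omega^\bu_{Q/k}[[v]], \lambda_{df})$. The operator $v\,d$ is a perturbation that is topologically nilpotent for the $v$-adic filtration, so the perturbation lemma produces a homotopy equivalence over $k[[v]]$ between $(\Omega^\bu_{Q/k}[[v]], v\,d + \lambda_{df})$ and a complex with underlying module $\Omega_f \otimes_k k[[v]]$ and some transferred (odd) differential. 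But $\Omega_f \otimes_k k[[v]]$ is concentrated in the single $\Z/2$-degree $\ov{d}$, so every odd endomorphism of it vanishes; hence the transferred differential is zero and $HN^{\Z/2}_*(\mf(f)) \cong \Omega_f \otimes_k k[[v]]$, free of rank $\dim_k \Omega_f$ over $k[[v]]$ and concentrated in degree $d \bmod 2$. Since $k[[v]] \to k((v))$ is flat, inverting $v$ commutes with homology and yields $HP^{\Z/2}_*(\mf(f)) \cong \Omega_f \otimes_k k((v))$, finite-dimensional over $k((v))$ and again concentrated in degree $d \bmod 2$, proving~(2).

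The step I expect to be the main obstacle is the single-degree concentration in part~(1): verifying that the isolated-singularity hypothesis is precisely what forces $\partial f/\partial x_1,\dots,\partial f/\partial x_d$ to be a regular sequence, so that the Koszul cochain complex collapses to one cohomological degree (together with the local-to-global reduction via unimodularity of $df$ away from $\fm$). Once this is in place, part~(2) is essentially formal --- the parity obstruction kills the perturbed differential automatically, and finiteness and freeness follow for free. The only remaining care is the unfolding bookkeeping when translating Theorem~\ref{thm:hkr} into the explicit complexes above (the substitution $t = 1$, the relation $v = ut^{-1}$ of Example~\ref{ex:technical}, and matching the $\Z/2$- and $\Z$-gradings).
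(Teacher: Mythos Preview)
Your argument is correct and aligns with the paper's route in spirit, though it is considerably more explicit. For part~(1), the paper simply cites \cite[Theorem~6.6]{dyckerhoff} and remarks that the result ``also follows from Theorem~\ref{thm:hkr}''; you have carried out exactly that alternative, reducing via the de Rham model to the Koszul complex on the partials of $f$ and using the isolated-singularity hypothesis to get regularity of that sequence. For part~(2), the paper deduces freeness by invoking degeneration of the Hodge--to--de Rham spectral sequence (citing \cite[Section~7]{dyckerhoff} and \cite[Proof of Proposition~9]{shklyarov1}), whereas you run the homological perturbation lemma. These are two packagings of the same mechanism: the single $\Z/2$-parity concentration established in~(1) forces every odd differential --- whether a page differential in the spectral sequence or the transferred differential in your HPL setup --- on $\Omega_f \otimes_k k[[v]]$ to vanish. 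Your version is a bit more direct, since it delivers the $k[[v]]$-freeness and the explicit module $\Omega_f \otimes_k k[[v]]$ in one stroke, without a separate passage from the associated graded back to $HN^{\Z/2}_*$.
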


\begin{rem} 
  Rephrasing in terms of $k[t, t^{-1}]$-modules, Theorem~\ref{thm:dyckerhoff}(1) means that we have an isomorphism
  $$
  \Sigma^d \Omega_f[t, t^{-1}] \cong HH^{\Z/2}_*(\mf(f))
  $$
  of graded $k[t,t^{-1}]$-modules induced by the inclusion $\Omega^d_Q[t, t^{-1}] \into \Omega^{\bullet}_Q[t, t^{-1}]$,
  and Theorem~\ref{thm:dyckerhoff}(2) says that 
  $HN^{\Z/2}_*(\mf(f))$ is a graded free module of finite rank over the ring $k[t,t^{-1}][[u]] = k[[v]][t, t^{-1}]$ (see Example \ref{ex:technical}),
  and similarly for $HP$. In particular, Theorem~\ref{thm:dyckerhoff}(2) implies that $HN^{\Z/2}_*(\mf(f))$ is $v$-torsion free, and thus its quotient by $v$ may be identified with
$HH^{\Z/2}_*(\mf(f))$. A choice of $k[t,t^{-1}]$-linear splitting of
$HN^{\Z/2}_*(\mf(f)) \onto HH^{\Z/2}_*(\mf(f))$ determines an isomorphism
$HN^{\Z/2}_*(\mf(f)) \cong \Sigma^d \Omega_f [[v]][t,t^{-1}]$ of $k[[v]][t,t^{-1}]$-modules, 
but such an isomorphism is not  canonical.
\end{rem}

\begin{proof}
Part (1) is \cite[Theorem 6.6]{dyckerhoff}; it also follows from Theorem~\ref{thm:hkr} (and Remark~\ref{rem:hkr}). It is a consequence of (1) that the Hodge-to-de Rham spectral sequence degenerates \cite[Section 7]{dyckerhoff}; the statement in (2) concerning negative cyclic homology follows: see e.g. \cite[Proof of Proposition 9]{shklyarov1}. The statement in (2) about periodic cyclic homology is then~clear. 
\end{proof}

\section{A Wang-type exact sequence}
\label{sec:wang}

Let $k$, $Q$, $f$, and $R$ be as in Section~\ref{sec:tech}. The goal of this section is to leverage the calculations in Theorem~\ref{thm:dyckerhoff} to compute the negative cyclic and periodic cyclic homology of $\mf(f) \simeq \Dsingdg(R)$
relative to $k$ rather than $k[t, t^{-1}]$.

\subsection{The distinguished triangle}

A key tool is the following distinguished triangle, whose associated long exact sequence on periodic cyclic homology is reminiscent of the Wang exact sequence of a circle fibration: see Remark~\ref{wang}(3). This result implies Theorem~\ref{introwang} from the introduction.

  \begin{thm} 
  \label{wangthm} Let $k$ be a field of characteristic $0$, $Q$ a smooth $k$-algebra, and $f \in Q$ a non-zero divisor.

  \begin{enumerate}
  \item There is a distinguished triangle
    $$
    HN(\mf(f)) \to  HN^{\Z/2}(\mf(f)) \xra{h} HN^{\Z/2}(\mf(f)) \to
    $$
    in the derived category of  dg-$k [ u ]$-modules, where $h$ is a map satisfying 
    $$
    h(g(t) \cdot \a) = u g'(t) \a + g(t) h(\a)
    $$
    on the level of homology for any cycle $\a$ and $g \in k[t, t^{-1}]$. In more detail: in terms of the equivalent de Rham models, $h$ is given by the endomorphism
    $u \frac{\partial}{\partial t} + \l_f$ of $\Omega^\bu_Q[t, t^{-1}][[u]]$. 
    
\item Assume that the morphism $f: \Spec(Q) \to \A^1_k$ determined by $f$
has only one singular point, $\fm \in \Spec(Q)$, and it lies over the origin (i.e., $f \in \fm$). We have, for all $j \in \Z$ such that $j \equiv \dim(Q_\fm) \text{ (mod 2)}$, an exact sequence 
$$
0 \to HN_{j} (\mf(f)) \to HN^{\Z/2}_{j} (\mf(f)) \xra{h} HN^{\Z/2}_{j} (\mf(f)) \to HN_{j - 1} (\mf(f)) \to 0.
$$
\item Parts (1) and (2) hold with $HN$ replaced with $HP$ throughout. 
\end{enumerate}
  \end{thm}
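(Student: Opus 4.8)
The plan is to prove part (1) by passing to the de Rham models of Theorem~\ref{thm:hkr}, where the distinguished triangle becomes the short exact sequence of complexes that splits off the ``$dt$-part'' of the absolute de Rham complex; parts (2) and (3) will then follow formally from part (1) together with Theorem~\ref{thm:dyckerhoff}(2).

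First I would use Theorem~\ref{thm:hkr}(2),(3) to replace $HN(\mf(f))$ and $HN^{\Z/2}(\mf(f))$ by the de Rham models $HN^{\deR}(Q[t,t^{-1}],ft)=(\Omega^\bu_{Q[t,t^{-1}]/k}[[u]],\,ud+\l_{d(ft)})$ and $HN^{\deR,\Z/2}(Q[t,t^{-1}],ft)=(\Omega^\bu_{Q[t,t^{-1}]/k[t,t^{-1}]}[[u]],\,ud+\l_{d(ft)})$; these are dg-$Q[u]$-modules, hence dg-$k[u]$-modules. The transitivity sequence of K\"ahler differentials for $k\to k[t,t^{-1}]\to Q[t,t^{-1}]$ splits (everything is smooth), and since $\Omega^1_{k[t,t^{-1}]/k}$ is free of rank one on the degree $-1$ generator $dt$, I obtain a decomposition
$$
\Omega^\bu_{Q[t,t^{-1}]/k}\;\cong\;\Omega^\bu_{Q[t,t^{-1}]/k[t,t^{-1}]}\;\oplus\;dt\wedge\Omega^\bu_{Q[t,t^{-1}]/k[t,t^{-1}]}
$$
that persists after the graded $[[u]]$-completion. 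Let $F^1\subseteq HN^{\deR}(Q[t,t^{-1}],ft)$ be the graded ideal generated by $dt$; since both the de Rham differential $d$ and multiplication by $d(ft)=t\,df+f\,dt$ preserve this ideal, $F^1$ is a sub-dg-$k[u]$-module, and one checks directly that the quotient complex, with its induced differential (namely $ud'+\l_{t\,df}$, where $d'$ is the relative de Rham differential), is exactly $HN^{\deR,\Z/2}(Q[t,t^{-1}],ft)$. Since wedging with $dt$ has homological degree $-1$, the resulting short exact sequence of dg-$k[u]$-modules
$$
0\to F^1\to HN^{\deR}(Q[t,t^{-1}],ft)\to HN^{\deR,\Z/2}(Q[t,t^{-1}],ft)\to 0
$$
is, up to the shift identifying $F^1[1]$ with $HN^{\deR,\Z/2}(Q[t,t^{-1}],ft)$, precisely the asserted triangle $HN(\mf(f))\to HN^{\Z/2}(\mf(f))\xra{h}HN^{\Z/2}(\mf(f))\to$ in the derived category of dg-$k[u]$-modules.

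To identify $h$, I would lift a cycle $\bar\alpha$ of $HN^{\deR,\Z/2}(Q[t,t^{-1}],ft)$ to the ``same'' form $\alpha\in\Omega^\bu_{Q[t,t^{-1}]/k}[[u]]$ (via the direct-sum splitting above), apply $ud+\l_{d(ft)}$, and extract the $F^1$-component: using $d\alpha=d'\alpha+dt\wedge\tfrac{\partial\alpha}{\partial t}$ this component equals $dt\wedge\bigl(u\tfrac{\partial\alpha}{\partial t}+f\alpha\bigr)$, so, with a compatible choice of the shift isomorphism, $h$ acts on homology by the endomorphism $u\tfrac{\partial}{\partial t}+\l_f$ of $\Omega^\bu_Q[t,t^{-1}][[u]]$, as claimed. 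I would also record the complementary observations that $u\tfrac{\partial}{\partial t}+\l_f$ is in fact a genuine chain endomorphism of the de Rham model --- the cross terms $u[\tfrac{\partial}{\partial t},\l_{t\,df}]=u\l_{df}$ and $u[\l_f,d']=-u\l_{df}$ cancel --- and that it satisfies $h(g(t)\alpha)=ug'(t)\alpha+g(t)h(\alpha)$, which is the Leibniz-type identity in the statement. For part (3), I would simply invert $u$ in each de Rham model, i.e. apply the exact functor $(-)\otimes_{k[u]}k[u,u^{-1}]$ to the triangle from part (1); by the definitions of $HP$ and $HP^{\Z/2}$ in terms of $HN$ and $HN^{\Z/2}$ this yields the $HP$-version of the triangle, with the same formula for $h$.

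Finally, for part (1)(2) and its $HP$-analogue, I would take the long exact homology sequence of the triangle and invoke Theorem~\ref{thm:dyckerhoff}(2): under the stated hypothesis on $f$, the homology $HN^{\Z/2}_*(\mf(f))$ (resp. $HP^{\Z/2}_*(\mf(f))$) is concentrated in degrees $\equiv\dim(Q_\fm)\pmod 2$, so for $j\equiv\dim(Q_\fm)\pmod 2$ the neighboring terms $HN^{\Z/2}_{j\pm1}(\mf(f))$ vanish and the long exact sequence collapses to the four-term exact sequence in the statement. The one genuinely delicate point in the whole argument is the explicit computation of $h$ in part (1): one must carefully keep track of the homological degree of $dt$ and of the signs in the decomposition $d\alpha=d'\alpha+dt\wedge\partial_t\alpha$ in order to recognize the connecting homomorphism precisely as $u\partial_t+\l_f$ rather than a signed variant. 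Everything else is formal once Theorems~\ref{thm:hkr} and~\ref{thm:dyckerhoff} are in hand.
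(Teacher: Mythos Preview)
Your proof is correct and essentially identical to the paper's: both pass to the de Rham models via Theorem~\ref{thm:hkr}, use the decomposition $\Omega^\bu_{Q[t,t^{-1}]/k}\cong\Omega^\bu_{Q}[t,t^{-1}]\oplus\Omega^\bu_{Q}[t,t^{-1}]\,dt$ to exhibit $HN^{\deR}(Q[t,t^{-1}],ft)$ as the homotopy fiber of $u\tfrac{\partial}{\partial t}+\l_f$, and then invoke Theorem~\ref{thm:dyckerhoff}(2) and exactness of localization in $u$ for parts (2) and (3). The only cosmetic difference is that the paper writes the differential explicitly as a $2\times 2$ lower-triangular matrix and reads off the cone structure, whereas you phrase it as a short exact sequence of complexes and compute the connecting map by lifting cycles.
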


  \begin{proof} 
    We have a  canonical isomorphism
    $$
    \Omega^\bu_Q[t, t^{-1}] \oplus \Omega^\bu_Q[t, t^{-1}] \xra[\cong]{(\a, \b) \mapsto \a + \b dt}    \Omega^\bu_{Q[t,t^{-1}]},
    $$
    under which the differential for $HN^{\deR}(Q[t, t^{-1}], ft)$ corresponds to     
    $$
    u \begin{bmatrix} d_Q & 0 \\ \frac{\partial}{\partial t} & d_Q \end{bmatrix}
    + \begin{bmatrix} \l_{t df} & 0 \\  \l_f & \l_{t df}   \\ \end{bmatrix};
    $$
      here, $d_Q$ denotes the de Rham differential on $\Omega^\bu_Q$, extended $k[t,t^{-1}]$-linearly to $\Omega^\bu_Q[t, t^{-1}]$.
It is thus immediate that $HN^{\deR}(Q[t,t^{-1}], ft)$ is isomorphic, as a dg-$k[[u]]$-module, to the homotopy fiber of the map
    $$
    HN^{\deR, \Z/2}(Q[t, t^{-1}], ft) \xra{u \frac{\partial}{\partial t} + \l_f}
    HN^{\deR, \Z/2}(Q[t, t^{-1}], ft). 
    $$
Letting $h \ce u \frac{\partial}{\partial t} + \l_f $, Part (1) therefore follows from Parts (2) and (3) of Theorem~\ref{thm:hkr}. Part (2) is immediate from (1) and Theorem~\ref{thm:dyckerhoff}(2), and Part (3) follows by the exactness of inverting $u$. 
\end{proof}

\begin{rem}
\label{wang}
\text{ }
\begin{enumerate}
\item As is evident from the relation $h(g(t) \cdot \a) = u g'(t) \a + g(t) h(\a)$, the map $h$ is not $k[t, t^{-1}]$-linear, although its source and target are both complexes of
  $k[t, t^{-1}]$-modules. Put differently, $HN^{\Z/2}(\mf(f))$ is two-periodic, but the map $h$ is not. 
\item  
Suppose $Q = \C[x_0, \dots, x_{n+1}]$ and that the hypersurface $Q/(f)$ has an isolated singularity at the origin. Denote by $\varphi \co E \to S^1$ the \emph{Milnor fibration} associated to $f$; see \cite[Chapter 3]{dimca92} for background on the Milnor fibration. Milnor proves in~\cite[Theorem 6.5]{milnor} that the fiber $F$ of this fibration is homotopy equivalent to a wedge sum of copies of $S^{n+1}$; $F$ is called the \emph{Milnor fiber}. The Serre spectral sequence associated to the Milnor fibration collapses to a long exact sequence, called the \emph{Wang exact sequence}, of the form
\begin{equation}
\label{wangeq}
\cdots \to H^i(E) \to H^i(F) \xra{T - \id} H^i(F) \to H^{i+1}(E) \to \cdots,
\end{equation}
where the maps $H^i(E) \to H^i(F)$ are the natural ones, and $T$ denotes the automorphism of $H^*(F)$ induced by monodromy. 

On the other hand, by a Theorem of Efimov~\cite[Theorem 1.1]{efimov}, there is a canonical isomorphism
$$
HP^{\Z/2}_j(\mf(f)) \cong \bigoplus_{i \in \Z}H^{j + 2i + 1}(F ; \C)
$$
for all $j$ (see \cite{BP} for an $\ell$-adic version of this result); we note that~\cite[Theorem 1.1]{efimov} identifies periodic cyclic homology with vanishing cohomology, but the latter is isomorphic to the cohomology of the Milnor fiber since the singularity of $Q/(f)$ is isolated. It is therefore reasonable to consider the long exact sequence in Theorem~\ref{wangthm}(3) as an analogue of the Wang exact sequence. Theorem~\ref{wangthm} also suggests that one should consider $HP_*(\mf(f)) \cong HP_*(\Dsingdg(R))$ as an analogue of the cohomology of the total space $E$ of the Milnor fibration.
\end{enumerate}
\end{rem}

\subsection{The homogeneous case}

  We now assume $Q$ is the polynomial ring $k[x_0, \dots, x_{n+1}]$, equipped with its standard internal grading given by $\deg(x_i) = 1$ for all $i$. Assume also that 
  $f$ is homogeneous, say of internal degree $\deg(f) = e$. 
Henceforth, we will use the notation $|-|$ for homological degree and $\deg(-)$ for internal degree.
The main goal of this subsection is to prove Corollary~\ref{cor:0}, which provides an important link between the $\Z$-graded and $\Z/2$-graded negative cyclic homology of the singularity category of a homogeneous isolated hypersurface singularity.

  Recall that $\Omega^\bu_Q[t, t^{-1}]$ has a homological grading, 
  where $|t| = -2$ and $|\Omega^p_Q| = p$; we also equip $\Omega^p_Q[t, t^{-1}]$ with an internal grading by declaring $\deg(t) = -e$ and
  $\deg(g_0 dg_1 \cdots dg_j) = \sum_{i=0}^j  \deg(g_i)$.
  With these conventions, we have $|ft| = -2$ and  $\deg(ft) = 0$, and the de Rham differential $d_Q$ is an operator of homological degree $+1$ and internal degree $0$. 
 
\begin{rem}
\label{rem:extend}
  It is tempting to extend the internal grading to $\Omega^\bu_Q[t,t^{-1}][[u]]$  by declaring $\deg(u) = 0$,
  so that, for instance,  the differential on $HN^{\deR, \Z/2}(Q[t, t^{-1}], ft)$, namely $u d_Q + t \l_{df}$ has internal degree $0$.
  But, this does not entirely make sense,
  for observe that this would require setting $\deg(v) = \deg(ut^{-1}) = e$, and, since $\Omega^\bu_Q[t,t^{-1}][[u]] = \Omega^\bu_Q[[v]][t,t^{-1}]$, this would mean we are ``grading''
  the power series ring $k[[v]]$ with $\deg(v) \ne 0$, which is nonsensical.
 \end{rem} 
 
  Remark~\ref{rem:extend} notwithstanding, we define a grading-like operator $\G$ on  $\Omega^\bu_Q[[v]][t, t^{-1}] = \Omega^\bu_Q[t,t^{-1}][[u]]$ by setting $\G(\omega t^iu^j) = (\deg(\omega ) - i \cdot e)\omega
  t^i u^j$ for $\omega   \in \Omega^\bu_Q$ and extending along infinite sums.
  Equivalently, and more explicitly, we define the operator $\Gamma$ on 
  $\Omega^\bu_Q [[ v ]][t, t^{-1}]$ by declaring $\Gamma(v) = ev$ and extending along infinite sums as follows: a typical element of $\Omega^\bu_Q [[ v ]][t, t^{-1}]$ has the form $\a = \sum_j \sum_m \sum_p \omega_{j,m,p} v^m t^j$, where $j$ ranges over a finite subset of $\Z$,
$m$ ranges over all non-negative integers, $p$ ranges over a finite subset of $\N$, and $\o_{j,m,p}$ is a homogeneous element of $\Omega^\bu_Q$ of internal degree $p$. We~set
$$
\Gamma(\a) = \sum_{j,m,p} (p + em-ej) \omega_{j,m,p} v^m t^j.
$$
Since $\deg(u) = 0$, $\Gamma$ is a $k[u]$-linear derivation: 
  $\Gamma(u \a) = u \G(\a)$, and $\Gamma(\a \b) = \Gamma(\a) \b + \a \Gamma(\b)$. Note that the homological grading is ignored, and there are no signs.

\begin{notation} For any integer $i$ and operator $\Gamma$ on a $k$-vector space $W$,  
  define $\Gamma_i  W \ce \ker(\Gamma-i)$, the eigenspace of $\Gamma$ with eigenvalue $i$.  
\end{notation}

  For an honest $\Z$-graded vector space $W$, with $\Gamma$ taken to be the grading operator,  $\Gamma_i(W)$
  coincides with the $i^{\th}$ graded piece of $W$, which we will typically write as $[W]_i$; we thus have $W = \bigoplus_i \Gamma_i(W) =  \bigoplus_i [W]_i$. 
  Such a decomposition does not hold for $W = \Omega^\bu_Q[[v]][t, t^{-1}]$, even if we take $Q = k$.
  For instance, $\G_{j} k [[ v ]][t,t^{-1}] = 0$ when $e \nmid j$, and
  $\G_{i \cdot e} k [[ v ]][t,t^{-1}] = k[u] t^{-i}$. We therefore have:
  $$
  \bigoplus_j \G_j k [[ v ]][t,t^{-1}] = k[v][t, t^{-1}] \subsetneq k[[v]][t, t^{-1}].
  $$
More generally, for each integer $i$, 
$$
\Gamma_i \Omega^\bu_Q [[ v ]][t,t^{-1}] = \bigoplus_{j \in \Z} \bigoplus_{m \geq 0}  \Gamma_{i-em+ej} \Omega^\bu_Q v^m t^j
= \left[\Omega^\bu_Q[v, t, t^{-1}]\right]_i.
$$

The differential $t df  + u d_Q$ on $HN^{\deR}(Q[t, t^{-1}], tf)$ commutes with $\Gamma$ (loosely speaking, the differential ``has degree zero''); it follows that $\Gamma$ induces an operator on
$HN_*(\mf^{\Z/2}(f))$, which we also write as $\Gamma$. As with $\Omega^\bu_Q [[ v ]][t,t^{-1}]$, $\Gamma$ does not induce a $\Z$-grading on $HN_*(\mf^{\Z/2}(f))$, but 
$\Gamma_0 HN_*(\mf^{\Z/2}(f))$ is a module over $k[u] = \Gamma_0 k[t, t^{-1}][[u]]$.

  \begin{thm}
  \label{thm:homotopy}
  Let $Q = k[x_0, \dots, x_{n+1}]$, equipped with the internal grading given by $\deg(x_i) = 1$, and let $f$  be a 
  homogeneous form of degree $e \ge 1$. The $k[u]$-linear endomorphism
  $h = u \frac{\partial}{\partial t} + \l_f$ of $HN^{\deR, \Z/2}(Q[t, t^{-1}], ft)$ is homotopic to the endomorphism $-\frac{v}{e}\G$.
  \end{thm}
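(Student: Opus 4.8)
The plan is to exhibit an explicit $k[u]$-linear chain homotopy. Recall from the proof of Theorem~\ref{wangthm} that the underlying complex of $HN^{\deR, \Z/2}(Q[t,t^{-1}], ft)$ is $\Omega^\bu_Q[t, t^{-1}][[u]]$ with differential $D = u\,d_Q + t\,\lambda_{df}$, where $d_Q$ is the de Rham differential of $\Omega^\bu_Q$ extended $k[t,t^{-1}][[u]]$-linearly, and that $h$ is the operator $u\tfrac{\partial}{\partial t} + \lambda_f$ on this complex. Let $E = \sum_{i=0}^{n+1} x_i \tfrac{\partial}{\partial x_i}$ be the Euler derivation of $Q$ and let $\iota_E$ be the associated contraction operator, extended $k[t,t^{-1}][[u]]$-linearly; since $\iota_E$ commutes with reduction modulo $u^m$, it is well-defined on the $u$-adic completion. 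I would take the homotopy to be $s \ce \tfrac{1}{e}\, t^{-1} \iota_E$, an operator of homological degree $+1$.

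The verification is then a short Cartan-calculus computation. One uses: (i) $t^{-1}$ commutes with $D$, since $d_Q$ and $\lambda_{df}$ are $k[t,t^{-1}]$-linear; (ii) Cartan's formula $d_Q \iota_E + \iota_E d_Q = L_E$, where $L_E$ acts on $\Omega^\bu_Q$ as multiplication by internal degree — call this operator $\Gamma_\Omega$; and (iii) $\lambda_{df}\iota_E + \iota_E \lambda_{df} = \lambda_{\iota_E(df)} = \lambda_{E(f)} = e\,\lambda_f$, by Euler's identity for the degree-$e$ form $f$. Combining (ii) and (iii) gives $D\iota_E + \iota_E D = u\,\Gamma_\Omega + e\, t\,\lambda_f$, and then (i) yields
$$
D s + s D = \tfrac{1}{e}\, t^{-1}\!\left(u\,\Gamma_\Omega + e\, t\,\lambda_f\right) = \tfrac{v}{e}\,\Gamma_\Omega + \lambda_f ,
$$
where $v = u t^{-1}$.

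To finish, I would rewrite $\tfrac{v}{e}\Gamma_\Omega$ in terms of $\Gamma$ and $h$. The defining formula $\Gamma(\omega\, t^i u^j) = (\deg \omega - ie)\,\omega\, t^i u^j$ amounts to the operator identity $\Gamma = \Gamma_\Omega - e\, t\,\tfrac{\partial}{\partial t}$ (with $\tfrac{\partial}{\partial t}$ taken in the $(t,u)$-coordinates, exactly as in the definition of $h$); hence $\tfrac{v}{e}\Gamma_\Omega = \tfrac{v}{e}\Gamma + v t \tfrac{\partial}{\partial t} = \tfrac{v}{e}\Gamma + u\tfrac{\partial}{\partial t}$, using $vt = u$. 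Therefore $D s + s D = u\tfrac{\partial}{\partial t} + \lambda_f + \tfrac{v}{e}\Gamma = h - \left(-\tfrac{v}{e}\Gamma\right)$. Since $h$ and $-\tfrac{v}{e}\Gamma$ are both degree-$0$ chain maps — for $h$ this is part of Theorem~\ref{wangthm}(1), and that $\Gamma$ commutes with $D$ is noted in the discussion preceding the theorem — this is precisely the asserted homotopy (up, possibly, to an overall sign depending on the sign conventions in the de Rham model). I expect no real obstacle: the only care needed is the bookkeeping that $s$ is $k[u]$-linear and passes to the completion, and the coordinate change between the $(t,u)$- and $(t,v)$-presentations underlying $\Gamma = \Gamma_\Omega - e\, t\,\tfrac{\partial}{\partial t}$. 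The one genuine step is to guess $s = \tfrac{1}{e}t^{-1}\iota_E$, which is essentially forced by the homological-degree count together with the Euler-operator shape of $\Gamma$.
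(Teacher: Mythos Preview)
Your proposal is correct and essentially identical to the paper's proof: the paper uses the same homotopy $\frac{\e_Q}{et}$ (their $\e_Q$ is your $\iota_E$), derives the same relations $[\e_Q, t\lambda_{df}] = et\lambda_f$ and $[\e_Q, u d_Q] = u\Gamma_Q$ (their $\Gamma_Q$ is your $\Gamma_\Omega$), and then performs the same coordinate computation to identify $vt\tfrac{\partial}{\partial t} - \tfrac{v}{e}\Gamma_Q$ with $-\tfrac{v}{e}\Gamma$. Your sign is correct, so the final hedge is unnecessary.
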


  \begin{proof} Define $\e_Q: \Omega^\bu_Q \to \Omega^\bu_Q$ to be the map induced by the {\em Euler derivation} on $Q$ sending $g \in Q$ to $\deg(g) g$; i.e.,
  $\e_Q$ maps $\Omega^p_Q$ to $\Omega^{p-1}_Q$ by the formula
  $$
  \e_Q (g_0 dg_1 \cdots dg_p) \ce \sum_{i=1}^p (-1)^{i-1} \deg(g_i) g_0g_i dg_1 \cdots \widehat{dg_i} \cdots dg_p.
  $$
  The map $\e_Q$ has homological degree $-1$ and internal degree $0$.  
  We also denote by $\e_Q$ its $k[t, t^{-1}] [[ u ]]$-linear extension (i.e., its $k[[v]][t,t^{-1}]$-linear extension) to  $\Omega^\bu_Q [t, t^{-1}]  [[ u ]] = \Omega^\bu_Q [[v]][t, t^{-1}]$.
We have relations
  $$
  [\e_Q, t \l_{df}] = \deg(df) t\l_{f } = e t\l_{f},  \quad \text{and} \quad  [\e_Q, u d_Q] = u \G_Q;
  $$
  where $[\a, \b] \ce \a \circ \b + \b \circ \a$ for operators of odd homological degree, and $\G_Q: \Omega^\bu_Q[[v]] [t, t^{-1}]  \to \Omega^\bu_Q [[v]][t, t^{-1}]$ denotes
  the $k[[v]][t, t^{-1}]$-linear map determined by $\G_Q(\o)  = \deg(\o) \o$ for $\o \in\Omega^\bu_Q$.  Since the differential on  $HN^{\deR}(\mf^{\Z/2}(f)) = \Omega^\bu_Q [[v]][t,
  t^{-1}]$ is $u d_Q + t \l_{df}$, we may interpret $\frac{\e_Q}{et}$ as a homotopy exhibiting  that $\l_f$ and $-\frac{v}{e} \G_Q$
  are homotopic endomorphisms of  $HN^{\deR}(\mf^{\Z/2}(f))$, and hence that $h$
  and  $vt \frac{\partial}{\partial t} - \frac{v}{e} \G_Q$ are homotopic. 

It remains to show $vt \frac{\partial}{\partial t}    - \frac{v}{e} \G_Q = - \frac{v}{e} \G$. We first note that, since $v = u/t$,  we have $\frac{\partial v^j}{\partial t} = -\frac{jv^j}{t}$. Thus, for $\a = \omega v^j t^i$,  we have:
  $$
  (vt \frac{\partial}{\partial t}    - \frac{v}{e} \G_Q)(\a) = (i - j + \frac{\deg(\omega)}{e}) \omega v^{j+1} t^i
= -\frac{v}{e} (\deg(\omega) + (j - i)e) \omega v^j t^i = - \frac{v}{e} \G(\a),
$$
and this extends along all infinite sums. 
\end{proof}

\begin{cor}   \label{cor:0}
  Let $Q$ and $f$ be as in Theorem~\ref{thm:homotopy}. Assume also that $n$ is even, and that $\Proj(Q/f) \subseteq \PP^{n+1}$ is smooth. 
The canonical map
$$
HN^{\deR}(Q[t, t^{-1}], ft)\to  HN^{\deR, \Z/2}(Q[t, t^{-1}], ft),
$$
  induced by the surjection $\Omega^\bu_{Q[t,t^{-1}]/k} \onto \Omega^\bu_{Q[t,t^{-1}]/k[t,t^{-1}]}$
 that sets $dt = 0$, induces a $k[u]$-linear isomorphism 
    $$
    HN^{\deR}_{\on{even}}(Q[t, t^{-1}], tf) \cong  \Gamma_0 HN^{\deR, \Z/2}_{*}(Q[t, t^{-1}], tf).
    $$
  \end{cor}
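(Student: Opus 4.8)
The plan is to deduce the statement from three inputs already available: the Wang-type distinguished triangle of Theorem~\ref{wangthm}, the homotopy $h \simeq -\tfrac{v}{e}\Gamma$ of Theorem~\ref{thm:homotopy}, and the finite freeness of Theorem~\ref{thm:dyckerhoff}(2). First I would check that the standing hypotheses of Theorems~\ref{thm:dyckerhoff} and~\ref{wangthm}(2) hold. Since $f$ is homogeneous of degree $e \ge 1$ we have $f \in \fm$, and by the Euler relation $ef = \sum_i x_i \partial_i f$ the critical locus of $f \colon \Spec(Q) \to \A^1_k$ is $V(\partial_0 f, \dots, \partial_{n+1}f) \subseteq \Spec(Q)$, whose projectivization is the singular locus of $X = \Proj(Q/f)$. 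Smoothness of $X$ forces this critical locus into $\{\fm\}$; if it is empty then $Q/f$ is regular, $\mf(f) \simeq 0$, and the asserted isomorphism is $0 \cong 0$, so we may assume it equals $\{\fm\}$. Hence Theorem~\ref{thm:dyckerhoff} applies with $d = \dim(Q_\fm) = n+2$, which is even because $n$ is; in particular, by Theorem~\ref{thm:dyckerhoff}(2) (transported through Theorem~\ref{thm:hkr}(3)) the complex $HN^{\deR,\Z/2}_*(Q[t,t^{-1}], ft)$ is concentrated in even homological degrees and is finite free, hence $v$-torsion free, over $k[[v]][t,t^{-1}]$.

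Next I would observe that the map in the statement, which sets $dt = 0$, is exactly the first arrow of the distinguished triangle of Theorem~\ref{wangthm}(1): in the proof of that theorem, $HN^{\deR}(Q[t,t^{-1}], ft)$ is identified, via the model $\Omega^\bu_Q[t,t^{-1}] \oplus \Omega^\bu_Q[t,t^{-1}] \cong \Omega^\bu_{Q[t,t^{-1}]}$, with the homotopy fiber of $h = u\tfrac{\partial}{\partial t} + \l_f$ acting on $HN^{\deR,\Z/2}(Q[t,t^{-1}], ft)$, and under this identification the structure map of the homotopy fiber is precisely the ``$dt = 0$'' map. Passing to homology and using the vanishing in odd degrees from the previous step -- equivalently, quoting the short exact sequence of Theorem~\ref{wangthm}(2) verbatim -- I conclude that in each even degree $j$ the map of the statement is injective with image $\ker\!\big(h \colon HN^{\deR,\Z/2}_j(Q[t,t^{-1}], ft) \to HN^{\deR,\Z/2}_j(Q[t,t^{-1}], ft)\big)$.

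Finally, Theorem~\ref{thm:homotopy} says that $h$ is $k[u]$-linearly chain-homotopic to $-\tfrac{v}{e}\Gamma$, so the two induce the same operator on $HN^{\deR,\Z/2}_*(Q[t,t^{-1}], ft)$. Since $e$ is a unit in $k$, $\ker h = \ker(v\Gamma)$ in each degree, and since $v$ acts injectively (finite freeness above) this equals $\ker\Gamma = \Gamma_0 HN^{\deR,\Z/2}_*(Q[t,t^{-1}], ft)$. Combining with the previous step and summing over even $j$ (the target is zero in odd degrees) yields the isomorphism $HN^{\deR}_{\even}(Q[t,t^{-1}], ft) \cong \Gamma_0 HN^{\deR,\Z/2}_*(Q[t,t^{-1}], ft)$; it is $k[u]$-linear because the ``$dt = 0$'' map, $h$, and $\Gamma$ all are.

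I expect no step here to carry serious content beyond the cited theorems; the one place that requires care is the middle step -- correctly matching the abstract structure and connecting maps of the Wang triangle with the concrete ``$dt = 0$'' map and its direction, and verifying that the parity $d \equiv 0 \pmod 2$ genuinely collapses the long exact sequence of Theorem~\ref{wangthm}(1) into the short exact sequences of Theorem~\ref{wangthm}(2). The closing algebra, $\ker(v\Gamma) = \ker\Gamma$ via $v$-torsion-freeness, is routine.
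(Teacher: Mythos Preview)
Your proof is correct and follows essentially the same approach as the paper's: use the Wang triangle together with odd-degree vanishing from Theorem~\ref{thm:dyckerhoff}(2) to identify the image of the $dt=0$ map with $\ker h$, then invoke Theorem~\ref{thm:homotopy} and $v$-torsion-freeness to rewrite this as $\Gamma_0$. You fill in two details the paper leaves implicit---the verification that the isolated-singularity hypothesis of Theorem~\ref{thm:dyckerhoff} holds, and the identification of the canonical $dt=0$ map with the structure map of the homotopy fiber in Theorem~\ref{wangthm}---but the architecture is the same.
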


  \begin{proof} By Theorem~\ref{thm:dyckerhoff}(2), we have $HN_i^{\Z/2}(\mf(f)) = 0$ for $i$ odd. It therefore follows from Theorem~\ref{thm:homotopy} that, for each $j \in \Z$, there is an exact sequence 
    $$
    0 \to   HN^{\deR}_{2j}(Q[t, t^{-1}], ft) \to HN^{\deR, \Z/2}_{2j}(Q[t, t^{-1}], ft) \xra{-v \frac{\G}{e}} HN^{\deR, \Z/2}_{2j}(Q[t, t^{-1}], ft)
    $$
    of $k$-vector spaces. Again by Theorem~\ref{thm:dyckerhoff}(2), the $k[[v]]$-module $HN^{\deR, \Z/2}_{* }(Q[t, t^{-1}], ft)$ is $v$-torsion free, and hence $\ker(-v \frac{\G}{e}) = \ker(\G)$.
      \end{proof}

\section{Proof of Theorem \ref{rephrase}} \label{mainsection}
We start with the following Theorem, which is a consequence of the results in Sections~\ref{sec:tech} and~\ref{sec:wang}; it encapsulates exactly what we will need from these sections for the proof of Theorem~\ref{rephrase}:

\begin{thm} \label{newtheorem}
Let $Q$ be the standard graded polynomial ring $k[x_0, \dots, x_{n+1}]$ with $n$ even,
and let $f$ be a non-zero, homogeneous element of degree $e \geq 1$. Assume that $\Proj(Q/f) \subseteq \PP^{n+1}$ is smooth. Define $\Omega_f$ to be the graded $k$-vector space 
$$
\Omega_f \ce \frac{\Omega^{n+2}_Q}{df \cdot\Omega^{n+1}_Q} \cong 
\frac{k[x_0, \dots, x_{n+1}]}{(\frac{\partial f}{\partial x_0}, \dots, \frac{\partial f}{\partial x_{n+1}})} dx_0 \cdots dx_{n+1}.
$$
  \begin{enumerate}
  \item $\bigoplus_m HN_{2m}(\mf(f))$ is a free (homologically) graded $k[u]$-module of finite rank. 
    In particular, $HN_{2m}(\mf(f)) = 0$ for $m \gg 0$,
    and multiplication by $u$ determines an isomorphism $HN_{2m+2}(\mf(f)) \xra{\cong}
    HN_{2m}(\mf(f))$ for $m \ll 0$.

  \item  
The grading operator $\Gamma$ on the de Rham HN complex $HN^{\dR, \Z/2}(Q[t, t^{-1}], ft)$ induces an operator on $HH_*^{\dR, \Z/2}(Q[t, t^{-1}], ft)$, and, for all $m \in \Z$, there is an isomorphism
  $$
  \left[\Omega_f\right]_{(\p -m) \cdot e} \xra{\cong} \Gamma_0 HH^{\dR, \Z/2}_{2m}(Q[t, t^{-1}], ft)
  $$
  induced by sending $\o \in [\Omega^{n+2}_Q]_{(\p -m) \cdot e}$ to the class $\o t^{\p-m}$.

  \item For each $m$, we have a short exact sequence 
     \begin{equation} \label{E27d}
    0 \to HN(\mf(f))_{2m+2} \xra{u} HN_{2m}(\mf(f)) \xra{q} \left[\Omega_f\right]_{(\p -m) \cdot e}
    \to 0;
         \end{equation}
    here, the map $q$ is the composition
    \begin{align*}
    HN_{2m}(\mf(f)) \xra{\cong} HN^{\dR}_{2m}(Q[t, t^{-1}], ft) & \xra{dt \mapsto 0} \Gamma_0 HN^{\dR, \Z/2}_{2m}(Q[t, t^{-1}], ft) \\ & \xra{u \mapsto 0} \Gamma_0 HH^{\dR, \Z/2}_{2m}(Q[t, t^{-1}], ft) \\
    & \xra{\cong} \left[\Omega_f\right]_{(\p -m) \cdot e},     
    \end{align*}
    where the first isomorphism is from Theorem \ref{thm:hkr}(2), and the last is (the inverse of) the isomorphism in (2). 
 \end{enumerate}
\end{thm}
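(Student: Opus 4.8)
The plan is to reduce everything to an explicit computation with the de Rham model
$$
C \ce \bigl(\Omega^\bu_Q[t,t^{-1}][[u]],\ u\, d_Q + t\,\lambda_{df}\bigr)
$$
for $HN^{\dR,\Z/2}(Q[t,t^{-1}],ft)\simeq HN^{\Z/2}(\mf(f))$ (Theorem~\ref{thm:hkr}(3)), equipped with the internal grading operator $\Gamma$ (with $\deg u = 0$, $\deg t = -e$, and $\deg\omega$ the polynomial degree for $\omega\in\Omega^\bu_Q$). First I would prove Part~(2). The complex computing $HH^{\dR,\Z/2}(Q[t,t^{-1}],ft)$ is $\bar C\ce(\Omega^\bu_Q[t,t^{-1}],\, t\,\lambda_{df})$; multiplication by $t$ is a chain automorphism of homological degree $-2$, so $\bar C$ is two-periodic, and $\Gamma$ is visibly its internal grading operator, which commutes with the differential and so descends to $HH^{\dR,\Z/2}$. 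Since $X=\Proj(Q/f)$ is smooth, the Jacobian ideal $(\partial_0 f,\dots,\partial_{n+1}f)$ is $\fm$-primary when $e\ge 2$ (and the unit ideal when $e=1$, a case in which $\mf(f)\simeq 0$ and everything is vacuous), so $\partial_0 f,\dots,\partial_{n+1}f$ is a regular sequence in $Q$ and the Koszul complex $(\Omega^\bu_Q,\,\lambda_{df})$ has cohomology $\Omega_f$ concentrated in top form-degree $n+2$. As $t$ is a unit and $\Omega^{n+3}_Q=0$, it follows that $H_*(\bar C)=\bigoplus_{k\in\Z}\Omega_f\cdot t^k$ with $\Omega_f\cdot t^k$ in homological degree $(n+2)-2k$; taking $k=\p-m$ identifies $HH^{\dR,\Z/2}_{2m}$ with $\Omega_f\cdot t^{\p-m}$. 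Since $\Gamma(\omega t^{\p-m})=(\deg\omega-(\p-m)e)\,\omega t^{\p-m}$, this gives the isomorphism $[\Omega_f]_{(\p-m)e}\xra{\cong}\Gamma_0 HH^{\dR,\Z/2}_{2m}$ of Part~(2).

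For Part~(3) I would work on the chain level throughout. By the identity $\Gamma_i(\text{power series})=[\text{polynomials}]_i$ from Section~\ref{sec:wang}, the eigensubcomplex $\Gamma_0 C\subseteq C$ is the \emph{honestly $\Z$-graded} complex $\bigl(K[u],\ t\,\lambda_{df}+u\,d_Q\bigr)$, where $K\ce\bigoplus_{k\ge 0}[\Omega^\bu_Q]_{ek}\,t^k$ carries the differential $t\,\lambda_{df}$; moreover $K[u]$ is finite-dimensional in each homological degree, and the reduction $C\onto\bar C$ ($u\mapsto 0$) restricts to a surjection $K[u]\onto K=\Gamma_0\bar C$ with kernel $uK[u]$. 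The Koszul computation of Part~(2), applied slice by slice, gives $H_*(K)=\bigoplus_{k\ge 0}[\Omega_f]_{ek}\,t^k$, concentrated in form-degree $n+2$ and hence (since $n+2$ is even) in even homological degree. Consequently the spectral sequence of the $u$-adic filtration on $K[u]$ --- which converges because $K[u]$ is degreewise finite-dimensional --- has $E_1$-page $H_*(K)[u]$ concentrated in form-degree $n+2$, admits no higher differentials, and forces $H_*(\Gamma_0 C)=H_*(K[u])$ to be concentrated in even homological degree as well. The short exact sequence of complexes $0\to uK[u]\to K[u]\to K\to 0$ then yields, using $H_{\odd}(K[u])=0=H_{\odd}(K)$, short exact sequences $0\to H_{2m+2}(\Gamma_0 C)\xra{u}H_{2m}(\Gamma_0 C)\to H_{2m}(K)\to 0$. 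Finally I would match this with the statement: $H_*(\Gamma_0 C)\cong\Gamma_0 H_*(C)\cong HN_{\on{even}}(\mf(f))$ --- the first isomorphism because $C$ is the (locally finite) sum of its $\Gamma$-eigensubcomplexes and $\Gamma$ commutes with the differential, the second by Corollary~\ref{cor:0} and Theorem~\ref{thm:hkr}(2) --- while $H_{2m}(K)=\Gamma_0 HH^{\dR,\Z/2}_{2m}\cong[\Omega_f]_{(\p-m)e}$ by Part~(2); under these identifications the map ``$u\mapsto 0$'' becomes the map $q$ of the statement.

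Part~(1) is then formal. By Part~(3), $u$ acts injectively on $\bigoplus_m HN_{2m}(\mf(f))$ with cokernel $\bigoplus_m[\Omega_f]_{(\p-m)e}$, which is finite-dimensional since $\Omega_f$ is. Moreover the de Rham model $\bigl(\Omega^\bu_{Q[t,t^{-1}]/k}[[u]],\ u\,d+\lambda_{d(ft)}\bigr)$ for $HN(\mf(f))$ vanishes in homological degrees $>n+3$ (as $\Omega^p_{Q[t,t^{-1}]/k}=0$ for $p>n+3$), so $HN_{2m}(\mf(f))=0$ for $m\gg 0$. Hence $\bigoplus_m HN_{2m}(\mf(f))$ is a bounded-above graded $k[u]$-module, finitely generated over $k[u]$ by graded Nakayama and $u$-torsion-free, so free of finite rank; the stated behavior of $u$ for $m\ll 0$ and the vanishing for $m\gg 0$ then follow (or may be read off the exact sequence in Part~(3), as $[\Omega_f]_{(\p-m)e}=0$ for $|m|$ large). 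The step I expect to be the main obstacle is making the chain-level reduction in Part~(3) rigorous: verifying that $\Gamma_0 C$ genuinely computes $HN_*(\mf(f))$ --- rather than merely $\Gamma_0$ of some homology --- and that the exact sequence it produces is the one in the statement, with the correct $q$. This is exactly where one leans on Corollary~\ref{cor:0}, on Theorem~\ref{thm:hkr}, and on the bookkeeping identity underlying ``$\Gamma_0 C=K[u]$''. A secondary point is the convergence and degeneration of the $u$-adic spectral sequence for $K[u]$, which rests on $\Omega_f$ sitting in a single form-degree --- ultimately a consequence of the smoothness of $X$.
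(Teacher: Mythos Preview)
Your chain-level strategy is sensible and the identification $\Gamma_0 C = K[u]$ is correct, but two of your justifications are wrong as written.

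\textbf{The eigenspace decomposition of $C$.} Your argument that $H_*(\Gamma_0 C)\cong\Gamma_0 H_*(C)$ rests on the claim that $C$ is the (locally finite) sum of its $\Gamma$-eigensubcomplexes. This is false. The paper itself points out that $\bigoplus_j \Gamma_j\, k[[v]][t,t^{-1}] = k[v][t,t^{-1}] \subsetneq k[[v]][t,t^{-1}]$, and the same obstruction appears in $C$: for instance $\sum_{j\ge 0} x_0^{2j}\, t^{-j}\, u^j$ is a bona fide element of $C_0$, yet it is not a finite sum of $\Gamma$-eigenvectors. The conclusion you need is nonetheless true, but the correct argument is a \emph{splitting} rather than a grading: setting $K^\perp \ce \bigoplus_{i\ne 0}\Gamma_i\bar C$, one checks that $C = K[u]\oplus K^\perp[[u]]$ as complexes (the first summand equals $\Gamma_0 C$ because $K$ is bounded above, so the $u$-power series collapses to polynomials), and that $\Gamma$ acts invertibly on $K^\perp[[u]]$ (it acts by nonzero scalars on each summand of $K^\perp$, and this passes to the product over $u$-powers). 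Hence $\Gamma_0 H_*(C) = \Gamma_0 H_*(K[u]) \oplus \Gamma_0 H_*(K^\perp[[u]]) = H_*(K[u])$.

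\textbf{Vanishing in high degree.} Your argument for $HN_{2m}(\mf(f))=0$ when $m\gg 0$ is also wrong: the de Rham model $\Omega^\bu_{Q[t,t^{-1}]/k}[[u]]$ does \emph{not} vanish in large homological degree, because $t$ carries homological degree $-2$, so $t^{-N}$ sits in degree $2N$. What you should use instead is your own complex $K[u]$, which genuinely satisfies $(K[u])_d = 0$ for $d>n+2$; once the identification $HN_{\on{even}}(\mf(f))\cong H_*(K[u])$ is secured as above, the vanishing follows.

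For comparison, the paper avoids both issues by never passing to chain-level eigenspaces. It works entirely on homology: Dyckerhoff's degeneration gives $H_{\odd}(C)=0$, hence a short exact sequence $0\to H_{2m+2}(C)\xra{u}H_{2m}(C)\to H_{2m}(\bar C)\to 0$; applying the left-exact functor $\Gamma_0$ to this and checking surjectivity on the right by hand yields the sequence, and Corollary~\ref{cor:0} then identifies $\Gamma_0 H_*(C)$ with $HN_*(\mf(f))$. Your approach via $K[u]$ is more explicit and, once patched, replaces the appeal to Dyckerhoff's Hodge-to-de-Rham degeneration by a direct Koszul-plus-spectral-sequence computation; but the price is that you must handle the interaction of $\Gamma_0$ with $[[u]]$ carefully, and your draft does not.
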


\begin{proof} Part (1) follows from Theorem \ref{thm:dyckerhoff}(2) and Corollary~\ref{cor:0}, using that $\Gamma_0 (k [[ v ]][t,t^{-1}]) = k[u]$. Let us now prove (2) and (3). To ease notation, we write $V  = HN^{\dR,\Z/2}(Q[t, t^{-1}], ft)$ and $W = HH^{\dR, \Z/2}(Q[t, t^{-1}], ft)$.
Since the homologies of both $V$ and $W$ vanish in odd degrees, the distinguished triangle
$$
V[2] \xra{u \cdot} V \xra{u \mapsto 0} W \to
$$
(where $V[2]$ denotes the shift of $V$ by 2 in homological degree) yields a short exact sequence
$$
  0 \to V_{2m+2} \xra{u\cdot} V_{2m} \xra{u \mapsto 0} W_{2m} \to 0.
$$
Since $\deg(u) = 0$, the map $V \xra{u \cdot} V$ commutes with $\Gamma$; it follows that $\Gamma$ induces an operator on $W$. Moreover, this operator commutes with the differential on $W$ and therefore induces an operator on its homology. The isomorphism $\Sigma^d \Omega_f \cong HH^{\dR, \Z/2}(Q[t, t^{-1}], ft)$ arising from Theorem~\ref{thm:dyckerhoff}(1) induces the desired isomorphism 
$\left[\Omega_f\right]_{(\p -m) \cdot e} \xra{\cong} \Gamma_0 HH^{\deR, \Z/2}_{2m}(\mf(f))$, which proves (2). 

We evidently have an exact sequence
$0 \to \Gamma_0 V_{2m+2} \xra{u\cdot} \Gamma_0 V_{2m} \xra{u \mapsto 0} \Gamma_0 W_{2m}.$ It is straightforward to check that $\Gamma_0 V_{2m} \xra{u \mapsto 0} \Gamma_0 W_{2m}$ is surjective, so in fact we have a short exact sequence
$$
0 \to \Gamma_0 V_{2m+2} \xra{u\cdot} \Gamma_0 V_{2m} \xra{u \mapsto 0} \Gamma_0 W_{2m} \to 0
$$
of vector spaces. Applying Corollary \ref{cor:0} again, we obtain the short exact sequence
  $$
  0 \to HN^{\dR}_{2m+2}(Q[t, t^{-1}], ft) \xra{u\cdot}  HN^{\dR}_{2m}(Q[t, t^{-1}], ft)  \xra{u \mapsto 0} \Gamma_0 HH^{\dR,\Z/2}_{2m}(\mf(f)) \to 0.
  $$
  The square
  $$
\xymatrix{
HN_{2m+2}(\mf(f)) \ar[d]^-{\cong} \ar[r]^-{u \cdot} & HN_{2m}(\mf(f)) \ar[d]^-{\cong} \\
HN^{\dR}_{2m+2}(Q[t, t^{-1}], ft) \ar[r]^-{u \cdot} & HN^{\dR}_{2m}(Q[t, t^{-1}], ft) \\
}
  $$
  evidently commutes, where the vertical isomorphisms arise from Theorem~\ref{thm:hkr}. We therefore arrive at the short exact sequence
$$
  0 \to HN_{2m+2}(mf(f)) \xra{u\cdot}  HN_{2m}(mf(f)) \xra{u \mapsto 0} \Gamma_0 HH^{\dR,\Z/2}_{2m}(\mf(f)) \to 0.
  $$
  The exactness of \eqref{E27d} therefore follows from the commutativity of the triangle
  $$
\xymatrix{ 
HN_{2m}(mf(f)) \ar[r]^-{u \mapsto 0} \ar[rd]^-{q} & \Gamma_0 HH^{\dR,\Z/2}_{2m}(\mf(f)) \\
& [\Omega_f]_{(\frac{n}{2} +1 -m) \cdot e} \ar[u]_-{\cong},
}
  $$
  where the vertical isomorphism is given by (2).
  \end{proof}

  \begin{cor} In the setting of Theorem~\ref{newtheorem}, the $k$-vector space $HP_0(\mf(f))$ has dimension equal to
  $\dim_\C \left[\Omega_f\right]_{\Z \cdot e}$.
 Moreover, setting $F_{\nc}^p =  F_{\nc}^p HP_0(\mf(f))$, we have canonical isomorphisms
  $$
  \frac{F_{\nc}^p}{F_{\nc}^{p+1}} \cong 
  \left[ \Omega_f \right]_{\left(\p  - p\right) \cdot e}
  $$
  for each integer $p$. 
\end{cor}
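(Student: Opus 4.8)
The plan is to reduce everything to statements about the graded $k[u]$-module $M \ce \bigoplus_m HN_{2m}(\mf(f))$, where $u$ has homological degree $-2$. By Theorem~\ref{newtheorem}(1), $M$ is free of finite rank over $k[u]$; in particular it is $u$-torsion-free. Since $HP_*(\mf(f)) = HN_*(\mf(f)) \otimes_{k[u]} k[u,u^{-1}]$ and only even homological degrees of $HN$ contribute in total degree $0$, we have $HP_0(\mf(f)) = (M[u^{-1}])_0$, and for each $p$ the canonical map $HN_{2p}(\mf(f)) = M_{2p} \to HP_{2p}(\mf(f)) = (M[u^{-1}])_{2p}$ is the localization map, hence injective by torsion-freeness. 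Composing with the isomorphism $u^p \colon (M[u^{-1}])_{2p} \xra{\cong} (M[u^{-1}])_0$ identifies $F^p_{\nc}HP_0(\mf(f))$ with the subspace $u^p M_{2p} \subseteq (M[u^{-1}])_0$. (In particular $F^p_{\nc} = 0$ for $p \gg 0$ because $HN_{2p}(\mf(f)) = 0$ for $p \gg 0$, and $F^p_{\nc} = HP_0(\mf(f))$ for $p \ll 0$ because $u \colon HN_{2p+2}(\mf(f)) \to HN_{2p}(\mf(f))$ is an isomorphism for $p \ll 0$; both by Theorem~\ref{newtheorem}(1). This re-proves Properties~\ref{assumptions}(2) for $\mf(f)$.)

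Next I would compute the graded pieces of the filtration. Writing $u^{p+1}M_{2p+2} = u^p\bigl(uM_{2p+2}\bigr)$ and noting that $uM_{2p+2}$ is the image of multiplication by $u$ inside $M_{2p}$, we obtain $F^{p+1}_{\nc} = u^p(uM_{2p+2}) \subseteq u^p M_{2p} = F^p_{\nc}$; since multiplication by $u^p$ is bijective on $M[u^{-1}]$, it induces a canonical isomorphism
$$
\frac{F^p_{\nc}}{F^{p+1}_{\nc}} \xra{\cong} \frac{M_{2p}}{uM_{2p+2}} = \frac{HN_{2p}(\mf(f))}{u \cdot HN_{2p+2}(\mf(f))}.
$$
By the short exact sequence \eqref{E27d} of Theorem~\ref{newtheorem}(3), the right-hand side is carried isomorphically by the map $q$ onto $[\Omega_f]_{(\p - p)e}$. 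This proves the second assertion, the isomorphism being canonical because it is assembled from $u^p$ and $q$.

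For the dimension statement I would then sum over the (finite) filtration:
$$
\dim_k HP_0(\mf(f)) = \sum_{p \in \Z} \dim_k\!\left(\frac{F^p_{\nc}}{F^{p+1}_{\nc}}\right) = \sum_{p \in \Z} \dim_k[\Omega_f]_{(\p - p)e} = \dim_k[\Omega_f]_{\Z \cdot e},
$$
the last equality because $p \mapsto (\p - p)e$ is a bijection from $\Z$ onto $e\Z$. (Equivalently, one can argue $\dim_k HP_0(\mf(f)) = \rk_{k[u]} M = \dim_k M/uM$ and $M/uM = \bigoplus_m M_{2m}/uM_{2m+2}$, and then apply the same reindexing.)

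I do not expect a genuine obstacle: all the substance lives in Theorem~\ref{newtheorem}, and what remains is bookkeeping with the degree shift $|u| = -2$ and with the definition of $F^\bullet_{\nc}$. The one place that genuinely uses the full strength of Theorem~\ref{newtheorem}(1)—rather than just the exact sequence \eqref{E27d}—is the injectivity of the localization map $HN_{2p}(\mf(f)) \to HP_{2p}(\mf(f))$: without $u$-torsion-freeness of $M$, the identification $F^p_{\nc} \cong u^p M_{2p}$, and hence the clean description of the associated graded, would fail.
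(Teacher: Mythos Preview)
Your proof is correct and is essentially the argument the paper has in mind: the corollary is stated in the paper without proof, as it follows directly from Theorem~\ref{newtheorem} by exactly the bookkeeping you carry out (freeness of $\bigoplus_m HN_{2m}(\mf(f))$ over $k[u]$ to identify $F^p_{\nc}$ with $u^p HN_{2p}(\mf(f))$, then the short exact sequence~\eqref{E27d} to compute the associated graded).
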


\begin{rem} We are not claiming that there is a \emph{canonical} isomorphism $HP_0(\mf(f)) \cong~\left[\Omega_f\right]_{\Z \cdot e}$.
\end{rem}

\begin{rem}
It follows from the definition of a Hodge structure of weight 0 that the intersection of $\Hdg(HP_0(\mf(f)))$ with $F^1_{\nc}$ is $0$. Thus, the composition
$$
\Hdg(HP_0(\mf(f))) \into F^0_{\nc} \xra{\can} F^0_{\nc} / F^1_{\nc} \cong \left[ \Omega_f \right]_{\left(\tfrac{n+2}{2}  \right) \cdot e}
$$
is injective; in particular, the Hodge classes of $HP_0(\mf(f))$ may be identified with a rational subspace of $\left[\Omega_f\right]_{\left(\tfrac{n+2}{2}\right) \cdot e}$. As a consequence, we see that there is no information lost when passing from the Chern character map $\ch_{HN} \co K_0(\mf(f)) \to HN_0(\mf(f))$ taking values in negative cyclic homology to the \emph{a priori} coarser map $\ch_{HH} \co K_0(\mf(f)) \to HH_0(\mf(f))$ given by the composition $K_0(\mf(f)) \xra{\ch_{HN}} HN_0(\mf(f)) \xra{u \mapsto 0} HH_0(\mf(f))$. 
\end{rem}

\subsection{The commutative diagram}
\label{sec:(2)}

We now prove Theorem~\ref{rephrase}(2). This follows from the existence and properties of the  diagram
\begin{equation} \label{E71}
\xymatrix{
K_0(X)\ar[d] \ar[r]^-{=} & K_0(X)\ar[d] \ar[r] & K_0(\Dsingdg(R)) \ar[d]  \\
KU^0(X)\ar[d]^-{\ch^{\top}}  \ar[r]^-{\cong} & K^{\top}_0(X) \ar[d]^-{\ch^{\top}} \ar@{->>}[r] & K^{\top}_0(\Dsingdg(R)) \ar[d]^-{\ch^{\top}}  \\
H^{\on{even}}(X;\C )  \ar[r]^-{\cong} & HP_0(X) \ar@{->>}[r] & HP_0(\Dsingdg(R)).\\
}
\end{equation}
The top vertical maps are the canonical ones. 
The map $KU^0(X) \to K^{\top}_0(X)$ is Blanc's comparison isomorphism, 
and the bottom horizontal map on the left is the HKR isomorphism.
Letting $E$ denote $K$, $K^{\top}$ or $HP$, 
the horizontal maps on the right side are defined by the sequence maps
\begin{equation} \label{E72}
  E_0(X) \xora{p^*}  E_0(W) \xla{\cong} E_0(\Db(R)) \xra{\can} E_0(\Dsingdg(R))
\end{equation}
where $W = \Spec(R)$, $p: W \to X$ is the map given by modding out the $\C^*$ action, and the isomorphism is given by Proposition \ref{point}.
(The fact that $p^*$ is indeed surjective, as indicated, will be justified below.)

In particular, diagram \eqref{thediagram} is the  ``boundary'' of diagram \eqref{E71}. It therefore suffices to prove \eqref{E71} commutes,
the two maps 
$K_0^{\top}(X) \to K_0^{\top}(\Dsingdg(R))$ 
and $HP_0(X) \to HP_0(\Dsingdg(R))$ are surjective as indicated, and the images of $K_0(X) \to K_0^{\top}(\Dsingdg(R))$ and $K_0(\Dsingdg(R)) \to K_0^{\top}(\Dsingdg(R))$
coincide. 

The commutativity of the top left square of \eqref{E71} is a consequence of the
construction of Blanc's map, and the bottom left square commutes by~\cite[Proposition 4.32]{blanc}.
The right side of this diagram commutes by the naturality of the map from algebraic to
topological $K$-theory and the topological Chern character map. Let us now justify that $p^*: E_0(X) \to E_0(W)$ is onto for each of $E = K$, $K^{\top}$ or $HP$. 
Toward this goal, let $Y$ be the blow-up of $\Spec(R)$ at $\fm$. 
The fiber of this blow-up is $X$, and the inclusion $i: X \into Y$ is the zero section of a map $\pi: Y \to X$ making $Y$ into a line
bundle over $X$. Moreover, we may identify $W$ with $Y \setminus X$; let $j: W \into Y$ denote the canonical open immersion.
Then we have $p = \pi \circ j$, and since $\pi$ is a line bundle over a smooth base, 
$\pi^*: E_0(X) \xra{\cong} E_0(Y)$ is an isomorphism. 
Since $Y$, $W$ and $X$ are all smooth, d\'evissage gives that 
$j^*$ fits into the long exact sequence
$$
\cdots \to E_0(Y) \xra{j^*} E_0(W) \to E_{-1}(X) \to \cdots.
$$
For each of these functors, we have $E_{-1}(X) = 0$, and thus $p^*$ is surjective, as claimed. Now assume $E = K^{\top}$ or $E = HP$. 
Proposition \ref{HPsing} gives that $E_0(\Db(R)) \to E_0(\Dsingdg(R))$ is onto, and thus the right-most map in \eqref{E72} is also surjective in these two cases.
This proves the lower two horizontal maps on the right side of \eqref{E71} are surjections as indicated.

To complete the proof, it suffices to show the image of the map 
$$
K_0(\Dsingdg(R)) \to  K_0^{\top}(\Dsingdg(R))
$$
and the image of the composition
$$
G_0(R) = K_0(\Db(R)) \to  K_0(\Dsingdg(R)) \to K_0^{\top}(\Dsingdg(R))
$$
coincide.

\begin{rem}
The map 
  $K_0(\Db(R)) \to K_0(\Dsingdg(R))$ itself need not be onto, due to the fact that $K_{-1}(R)$ is typically non-zero; see \cite{CHWW}.
  \end{rem}
However, the map 
  $K_0(\Db(R)) \to K_0(\Dsingdg(R))$ is onto ``up to $\A^1$-homotopy''. In detail,
consider the diagram
$$
\xymatrix{
  K_0(\Db(R[x])) \ar[r] \ar[d]^{i^*_0 - i^*_1} & K_0(\Dsingdg(R[x]) \ar[r] \ar[d]^{i^*_0 - i^*_1}  &   K_{-1}(\Perf(R[x])) \ar[d]^{i^*_0 - i^*_1}  \ar[r] & 0 \\
  K_0(\Db(R)) \ar[r]   & K_0(\Dsingdg(R)) \ar[r] &   K_{-1}(\Perf(R)) \ar[r]  & 0  \\
  }
  $$
  with exact rows and in which the vertical maps are given by the difference of the two maps induced by setting $x$ equal to $0$ and $1$. (We may model $\Db$ as
  bounded below complexes of finitely generated projective modules with bounded homology, and with this model it is clear that setting $x$ equal to any constant determines a dg-functor. This restricts to a dg-functor on $\Perf$ and hence on $\Dsingdg$.)  Let us write $\ov{K_0(-)}$ for the cokernels of the columns of this diagram, so that we have a right
  exact sequence
  \begin{equation} \label{E73}
\ov{K_0(\Db(R))} \to \ov{K_0(\Dsingdg(R))} \to \ov{K_{-1}(\Perf(R))}\to 0.
  \end{equation}
The result we seek follows directly from the following two claims:
  (1) the map $K_0(\Dsingdg(R)) \to K_0^{\top}(\Dsingdg(R))$ factors through the canonical
  surjection
  $K_0(\Dsingdg(R)) \onto   \ov{K_0(\Dsingdg(R))}$, and (2) the map
$\ov{K_0(\Db(R))} \to \ov{K_0(\Dsingdg(R))}$ is onto. The first claim follows from the fact that $K^{\top}$ is $\A^1$-homotopy invariant. For the second claim, since the functor
$\ov{K_{-1}(-)}$ is $\A^1$-homotopy invariant and $R$ is standard graded, we have $\ov{K_{-1}(R)} \cong K_{-1}(k) = 0$. The second claim therefore follows from \eqref{E73}.

\subsection{An alternative description of the map $\alpha$}

We next establish in Lemma~\ref{lem25a} an alternative description of the map $\alpha: H^n_{\prim}(X) \to HP_0(\Dsingdg(R))$ defined in \eqref{alphadef}. 
This description will be used to show that it is an isomorphism that preserves Hodge filtrations.

We begin with some setup. Let $U$ denote the affine variety $\PP^{n+1} \setminus X$. Applying the distinguished triangle \eqref{eqn:HPdevtri}, we obtain a d\'evissage long exact sequence
\begin{equation}
\label{eqn:devUX}
\cdots \to HP_1(\PP^{n+1}) \to HP_1(U) \xra{\del_{U,X}} HP_0(X) \to HP_0(\PP^{n+1}) \to \cdots
\end{equation}
(the subscript $U,X$ on the boundary map is included to distinguish it from the other boundary maps we consider). We set
$$
HP_0^{\prim}(X) \ce \ker(HP_0(X) \to HP_0(\PP^{n+1})).
$$
Since $X$ is a smooth hypersurface in $\PP^{n+1}$ of even dimension, we have $
H^n_{\prim}(X; \C) \cong HP_0^{\prim}(X).
$
As $HP_1(\PP^{n+1}) = 0$, it follows that there is an isomorphism
\begin{equation} \label{E17a}
\del_{U,X}: HP_1(U) \xra{\cong} HP^{\prim}_0(X).
\end{equation}
Let $V = \Spec(Q[\tfrac{1}{f}])$, the open complement of $\Spec(R) = \Spec(Q/f)$ in $\bA^{n+2} = \Spec(Q)$.
There is a canonical surjection $p: V \onto U$ given by modding out the action of $\C^*$, and it induces a map
$$
p^*: HP_1(U) \to HP_1(V).
$$
We also have the d\'evissage long exact sequence
\begin{equation}
\label{eqn:devVR}
\cdots \to HP_1(\bA^{n+2}) \to HP_1(V) \xra{\del_{V,R}} HP^{\BM}_0(R) \to HP_0(\bA^{n+2}) \to  HP_0(V) \to \cdots.
\end{equation}
Since $HP_1(\bA^{n+2}) = 0$, and $HP_0(\C) \cong HP_0(\bA^{n+2}) \to  HP_0(V)$ is  injective, the boundary map $\del_{V,R}$ is an isomorphism.

\begin{lem} \label{lem25a} The composition 
  $$
  H^n_{\prim}(X, \C) \cong HP_0^{\prim}(X) \xra[\cong]{\del_{U,X}^{-1}} HP_1(U) \xra{p^*} HP_1(V) \xra[\cong]{\del_{V,R}} HP_0^{\BM}(R) \onto HP_0(\Dsingdg(R))
  $$
  coincides with the map $\alpha$ defined in \eqref{alphadef}.
\end{lem}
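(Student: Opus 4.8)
The plan is to unwind both compositions and match them term-by-term, using the fact that all the maps in sight are induced by morphisms of schemes together with d\'evissage and localization triangles, which are natural. First I would recall that the map $\alpha$ of \eqref{alphadef} is, by construction, the restriction to primitive cohomology of the composition \eqref{HPcomp}, namely
$$
H^{\on{even}}(X;\C) \cong HP_0(X) \xra{p^*} HP_0(W) \xra{\cong} HP_0^{\BM}(R) \xra{\can} HP_0(\Dsingdg(R)),
$$
where here $W = \Spec(R)\setminus\{\fm\}$ and the middle isomorphism is from Proposition~\ref{point}. So the content of the lemma is to identify, on the level of $HP_1(U)$ and $HP_1(V)$, this ``quotient-by-$\C^*$'' description with the one that goes through the two boundary isomorphisms $\del_{U,X}$ and $\del_{V,R}$.

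The key step is a commutative diagram relating the d\'evissage triangle \eqref{eqn:devUX} for the pair $(X \subseteq \PP^{n+1})$, pulled back along the affine cone construction, with the d\'evissage triangle \eqref{eqn:devVR} for $(\Spec(R)\subseteq \bA^{n+2})$. Concretely, the total space $Y$ of the tautological line bundle over $\PP^{n+1}$ (the blow-up of $\bA^{n+2}$ at the origin) maps to $\PP^{n+1}$ via $\pi$ with zero section $i$, and $\bA^{n+2}\setminus\{0\}$ sits inside $Y$ as the complement of the zero section; this is exactly the geometry already used in Section~\ref{sec:(2)}. Restricting the divisor $X$, one gets that $\Spec(R)\setminus\{\fm\}$ is the restriction of this line bundle to $X$, that $V = \Spec(Q[1/f])$ is its complement in $Y\setminus\{0\}$, and — crucially — that $\pi$ restricts to the quotient map $p\colon V \onto U$ and to the isomorphism $HP_0(X)\xra{\cong} HP_0(W)$ of Proposition~\ref{point} (this last point is literally the statement that $\pi^*$ is an isomorphism for a line bundle over a smooth base). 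Because d\'evissage triangles are natural with respect to the pullback square
$$
\begin{tikzcd}
V \ar[r]\ar[d] & Y\setminus\{0\} \ar[d] \\
U \ar[r] & \PP^{n+1},
\end{tikzcd}
$$
the boundary maps $\del_{U,X}$ and (the version of) $\del_{V,R}$ for the pair $(X\text{-part of }Y \subseteq Y\setminus\{0\})$ fit into a commuting square with the vertical pullbacks $p^*$. Comparing this latter triangle with \eqref{eqn:devVR} itself via the open immersion $\bA^{n+2}\setminus\{0\}\into \bA^{n+2}$ (which induces an isomorphism on $HP_0^{\BM}(R)$ by the excision/d\'evissage identification $HP^{\BM}(R)\xra{\cong}HP^{\BM,\{\fm\}}$, already invoked in the proof of Proposition~\ref{point}) lets one replace the ``blown-up'' boundary map by $\del_{V,R}$.

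Assembling these squares, the composition in the lemma becomes: start in $HP_0^{\prim}(X)$, pull back via $\pi^*$ to land in $HP_0$ of the restricted line bundle (i.e.\ in $HP_0(W)$), identify with $HP_0^{\BM}(R)$ via the Proposition~\ref{point} isomorphism, and push to $HP_0(\Dsingdg(R))$ — which is precisely $\alpha$ restricted to $H^n_{\prim}(X;\C)\cong HP_0^{\prim}(X)$. The main obstacle I anticipate is bookkeeping rather than conceptual: one must be careful that the various ``$HP_0^{\BM}$'' groups appearing (for $\Spec(R)$ inside $\bA^{n+2}$, inside $Y\setminus\{0\}$, and as $HP^{\BM,\{\fm\}}$) are canonically identified in a way compatible with all the boundary maps, and that the isomorphism $H^n_{\prim}(X;\C)\cong HP_0^{\prim}(X)$ used on the source side is the same on both ends (it is, since it is induced by the HKR isomorphism and the decomposition of $H^{\on{even}}(X)$). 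Everything else is naturality of localization sequences, so once the geometric picture is set up the verification is a diagram chase.
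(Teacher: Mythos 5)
Your proposal is essentially the paper's proof: both are a diagram chase based on naturality of d\'evissage/localization sequences, with the auxiliary sequence for the pair $W \subseteq \A^{n+2}\setminus\{0\}$ (the paper's \eqref{eqn:devVW}) mediating between \eqref{eqn:devUX} and \eqref{eqn:devVR}. The paper records this in two small commutative squares; you unpack the same squares with a more explicit blow-up picture, which is fine.

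One parenthetical claim is wrong, though, and worth flagging even if it does not break the argument. You assert that $\pi$ restricts to ``the isomorphism $HP_0(X)\xra{\cong}HP_0(W)$ of Proposition~\ref{point} (this last point is literally the statement that $\pi^*$ is an isomorphism for a line bundle over a smooth base).'' This is not what Proposition~\ref{point} says, and the map $p^*\colon HP_0(X)\to HP_0(W)$ is \emph{not} an isomorphism: $W$ is the complement of the zero section in the line bundle $Y_X\to X$, i.e.\ a $\C^*$-bundle, so by the Gysin sequence $\ker(p^*)$ is precisely the image of $HP_0(\PP^{n+1})\to HP_0(X)$. Homotopy invariance gives $HP_*(X)\cong HP_*(Y_X)$, not $HP_*(X)\cong HP_*(W)$; and the isomorphism of Proposition~\ref{point} is $HP_0^{\BM}(R)\xra{\cong}HP_0(W)$. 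Fortunately your diagram chase never actually uses that $p^*$ is an isomorphism --- the top square only requires commutativity, and the final assembly only needs $p^*$ as the map appearing in the definition of $\alpha$ --- so the argument goes through once that parenthetical is deleted or corrected.
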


\begin{proof} This is a diagram chase involving the long exact sequences~\eqref{eqn:devUX} and~\eqref{eqn:devVR}, as well as the d\'evissage long exact sequence
\begin{equation}
\label{eqn:devVW}
\cdots \to HP_1(V) \xra{\del_{V,W}} HP_0(W) \to HP_0(\A^{n+2} \setminus \{0\}) \to HP_0(V) \to \cdots,
\end{equation}
where, as above, $W = \Spec(R) \setminus \{\fm\}$. 
  In a bit more detail: the naturality of these d\'evissage sequences, along with the fact that \eqref{eqn:devVW} maps to both \eqref{eqn:devUX} and \eqref{eqn:devVR}, yields the commutative  diagrams
  $$
\xymatrix{
    HP_1(U) \ar[r]^{p^*} \ar[d]^{\del_{U,X}}  & HP_1(V) \ar[d]^{\del_{V,X}} \\
    HP_0(X) \ar[r]^{p^*} & HP_0(W)
}
  \and
  \xymatrix{
    HP_1(V) \ar[d]^{\del_{V,W}} \ar[dr]^{\del_{V, R}} & \\
    HP_0(W) & HP_0^{\BM}(R). \ar[l]^\cong \\
    }
  $$
  The statement follows.
\end{proof}

\begin{prop} \label{prop17a} The map $\alpha$ is an isomorphism.
\end{prop}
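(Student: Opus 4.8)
The plan is to unwind the definition of $\alpha$, reduce the statement to two elementary facts about pullback along the $\C^*$-bundle $p\colon W\to X$, and verify those with the Gysin sequence. By construction --- see~\eqref{alphadef} and~\eqref{HPcomp}, or equivalently by unwinding Lemma~\ref{lem25a} --- $\alpha$ is the restriction to primitive cohomology of the composition
$$
H^{n}_{\prim}(X;\C)\xra{\iota}HP_0^{\prim}(X)\subseteq HP_0(X)\xra{p^*}HP_0(W)\xra[\cong]{\rho^{-1}}HP_0^{\BM}(R)\xra{q}HP_0(\Dsingdg(R)),
$$
where $\iota$ is the HKR isomorphism, $p^*$ is pullback along $p$, $\rho\colon HP_0^{\BM}(R)\xra{\cong}HP_0(W)$ is the restriction isomorphism of Proposition~\ref{point}, and $q$ is the surjection of Proposition~\ref{HPsing}. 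First I would record three observations: (i) $p^*(1_X)=1_W$, where $1_X\in H^0(X;\C)\subseteq HP_0(X)$ and $1_W\in H^0(W;\C)\subseteq HP_0(W)$ are the classes of $\OO_X$ and $\OO_W$; (ii) $1_X\notin HP_0^{\prim}(X)$, since $HP_0^{\prim}(X)=\ker\!\big(i_*\colon HP_0(X)\to HP_0(\PP^{n+1})\big)$ whereas $i_*(1_X)$ is the nonzero class $[X]\in H^2(\PP^{n+1};\C)$; and (iii) $\rho(\ker q)=\C\cdot 1_W$, which follows from Proposition~\ref{HPsing}, the compatibility of $\Perf$- and $\Db$-pullbacks along $W\into\Spec(R)$, and the isomorphism $HP_0(\Perf(R))\cong HP_0(\C)$.

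Given these, it suffices to prove two claims: (A) $p^*$ is injective on the subspace $\C\cdot 1_X+HP_0^{\prim}(X)\subseteq HP_0(X)$; and (B) $\dim_\C HP_0(W)=1+\dim_\C HP_0^{\prim}(X)$. Indeed, granting (A), the observations (i)--(iii) give $\ker\alpha=\iota^{-1}\{\,y\in HP_0^{\prim}(X):p^*y\in\C\cdot 1_W\,\}=0$, so $\alpha$ is injective; then (B) together with Propositions~\ref{point} and~\ref{HPsing} yields $\dim_\C\im\alpha=\dim_\C HP_0^{\prim}(X)=\dim_\C HP_0(W)-1=\dim_\C HP_0^{\BM}(R)-1=\dim_\C HP_0(\Dsingdg(R))$, so $\alpha$ is onto, hence an isomorphism.

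To prove (A) and (B) I would use the Gysin sequence of the $\C^*$-bundle $p\colon W\to X$ --- obtainable either from the localization/d\'evissage machinery of the paper (applied to the zero section inside the total space of $\OO_X(-1)$, noting $HP_*(W)=\bigoplus_j H^{*+2j}_{\dR}(W)$ since $W$ is smooth), or classically from the topological $\C^*$-bundle $W^{\an}\to X^{\an}$ --- which has Euler class $\pm L\in H^2(X;\C)$, $L$ the hyperplane class. Because $X$ is a smooth hypersurface of even dimension, $H^{\mathrm{odd}}(X;\C)=0$, so the sequence collapses to isomorphisms $H^{2k}(W;\C)\cong\coker\!\big(L\colon H^{2k-2}(X)\to H^{2k}(X)\big)$ and identifications $\ker\!\big(p^*\colon H^{2k}(X)\to H^{2k}(W)\big)=\im\!\big(L\colon H^{2k-2}(X)\to H^{2k}(X)\big)$ for all $k$. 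For (A): the kernel of $p^*$ on $HP_0(X)=H^{\mathrm{even}}(X;\C)$ is thus $L\cdot HP_0(X)$, whose degree-$0$ part is $0$ and whose degree-$n$ part is $L\cdot H^{n-2}(X)$; since $HP_0^{\prim}(X)=H^n_{\prim}(X)$ lives in degree $n$ and $L\cdot H^{n-2}(X)\cap H^n_{\prim}(X)=0$ by Hard Lefschetz, the subspace $\C\cdot 1_X+HP_0^{\prim}(X)$ meets $\ker p^*$ only in $0$. For (B): summing the displayed isomorphisms over $k$ identifies $HP_0(W)=H^{\mathrm{even}}(W;\C)$ with $\coker\!\big(L\colon HP_0(X)\to HP_0(X)\big)$, so $\dim_\C HP_0(W)=\dim_\C\ker\!\big(L\colon HP_0(X)\to HP_0(X)\big)$, and the Lefschetz decomposition of $H^{\mathrm{even}}(X;\C)$ shows this kernel is $H^n_{\prim}(X;\C)\oplus H^{2n}(X;\C)$, of dimension $\dim_\C HP_0^{\prim}(X)+1$.

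The conceptual content here is small; I expect the main obstacle to be organizational --- carefully matching the long chain of canonical isomorphisms defining $\alpha$ with the Gysin-sequence computation of $H^*(W;\C)$, keeping the cohomological gradings straight, and separately disposing of the degenerate case $n=0$ (where $X$ is a finite reduced scheme, $W$ is a disjoint union of copies of $\C^*$, and both the ``only $H^n$ matters'' phenomenon and Hard Lefschetz degenerate, but (A) and (B) hold by inspection). One could alternatively route the Gysin input through the $\C^*$-bundle $V\to U$, using that the hyperplane class restricts to $0$ in $H^2(U;\C)$ and that $\dim_\C HP_0(U)=1$; but passing through $W\to X$ seems cleanest, since $HP_0(W)$ is exactly the group through which~\eqref{HPcomp} factors.
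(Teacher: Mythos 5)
Your proof is correct and takes a genuinely different route from the paper's. Both arguments ultimately exploit a Gysin-type decomposition for a $\C^*$-bundle, but you apply it to $p\colon W \to X$ and argue topologically, whereas the paper applies Lemma~\ref{lem25a} to transfer everything to the open complements $V \to U$ and works with the explicit de Rham models: there, the analogue of your Gysin input is the elementary isomorphism $\Omega^*_{A_0}\oplus\Omega^{*-1}_{A_0}\cong[\Omega^*_A]_0$, $(\a,\b)\mapsto\a+\tfrac{df}{f}\b$, which expresses the vanishing of the Euler class of $V\to U$. The paper also identifies the kernel of $HP_0^{\deR}(Q[t],ft)\onto HP_0^{\deR}(Q[t,t^{-1}],ft)$ as $\C\cdot[df\,dt]$ and matches it with $\C\cdot[df/f]$ via the explicit formula for $\del^{\deR}_{V,R}$ from Theorem~\ref{deRhamModels}; your observation~(iii) plays the same structural role but stays at the level of $K$-theoretic classes rather than de Rham representatives. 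The trade-off is that the paper's route produces the concrete de Rham description of $\alpha$ (via the maps $\phi$, $\psi$ and the boundary formula) that is then essential in Lemma~\ref{lem727} for comparing the polar filtration with the nc Hodge filtration, whereas your argument, while cleaner as a standalone proof of the isomorphism, does not set up that machinery and would leave the filtration comparison to be done separately. One small caveat you already flag yourself: the Hard Lefschetz/Lefschetz-decomposition inputs in steps~(A) and~(B) degenerate when $n=0$, so that case must be (and can easily be) handled by inspection; the paper's proof handles $n=0$ uniformly since $H^{\even}_{\dR}(U)=\C$ holds there too.
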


\begin{proof}  
By Theorem \ref{thm:hkr} and Lemma \ref{lem25a}, it suffices to prove that the composition 
  \begin{equation} \label{E522c}
  HP^{\deR}_1(U) \xra{p^*} HP^{\deR}_1(V) \xra[\cong]{\del_{V,R}^{\deR}} HP_0^{\deR}(Q[t], ft)  \onto HP^{\deR} _0(Q[t, t^{-1}], ft)
  \end{equation}
  is an isomorphism, where $\del_{V,R}^{\deR}$ is the de Rham version of the boundary map $\del_{V,R}$ in \eqref{eqn:devVR}. By Proposition~\ref{HPsing}, the last map in this composition is surjective, and its kernel is given by the image of $\C \xra{\cong} HP^{\deR}_0(R) \to HP^{\deR}_0(Q[t], ft)$: we begin by identifying this image. We have a commutative diagram
$$
\xymatrix{
\Z \ar[d] \ar[r]^-{\cong}_-{1 \mapsto [R]} & K_0(R) \ar[d]^-{\ch_{HP}} \ar[r]^-{} & G_0(R) \ar[d]^-{\ch_{HP}} \\
\C \ar[r]^-{\cong} & HP_0^{\deR}(R) \ar[r] & HP_0^{\deR}(Q[t], tf),
}
$$
where the right-most horizontal maps are induced by the inclusion $\Perf(R) \into \Db(R)$, and the left-most vertical map is the inclusion. The image of $1 \in \C$ under the composition $\C \xra{\cong} HP^{\deR}_0(R) \to HP^{\deR}_0(Q[t], ft)$ is therefore $\ch_{HP}([R])$, which is equal to $[dfdt]$ by \cite[Example 6.4]{BW}. 

The formula for $\del^{\deR}_{V,R}$ given in Theorem \ref{deRhamModels}  implies that $\del^{\deR}_{V,R}([df/f]) = [df  dt]$. We thus need only show that
the map
\begin{equation} \label{E522b}
HP^{\deR}_1(U) \oplus \C \xra{\left(p^*, \left[\tfrac{df}{f}\right]\right)}
HP^{\deR}_1(V)
\end{equation}
where  $\left[\frac{df}{f}\right] \in HP^{\deR}_1(V)$, is an isomorphism. This appears to be well-known (see e.g. \cite[Chapter 6, Section 1]{dimca92}), but we sketch a proof. 

Set $A \coloneqq \C[x_0, \dots, x_{n+1}][\frac{1}{f}]$, and recall that $V = \Spec(A)$ and $U = \Spec(A_0)$.
The Euler derivation gives a contracting homotopy on the internal degree $j$ part of the de Rham complex $(\Omega^*_A, d)$
for all $j \ne 0$, and thus we may identify the de Rham cohomology of $V$ with the cohomology of the complex $([\Omega^*_A]_0, d)$.
Moreover, we have an isomorphism 
\begin{equation} \label{E522}
\Omega^*_{A_0} \oplus \Omega^{*-1}_{A_0} \xra{\cong} \left[\Omega^*_A\right]_0
\end{equation}
given by $(\a, \b) \mapsto \a + \frac{df}{f} \b$.
This gives an isomorphism 
$$
(p^*, [\tfrac{df}{f}] p^*): H^m_{\deR}(U) \oplus H^{m-1}_{\deR}(U) \xra{\cong}  H^m_{\deR}(V) 
$$
for each $m$. The isomorphism \eqref{E522b} thus follows from the HKR isomorphisms
$HP^{\deR}_1(V) \cong H_{\deR}^{\odd}(V)$
and
$HP^{\deR}_0(U) \cong H_{\deR}^{\even}(U)$, along with the fact that,
since $U$ is the complement of a smooth projective  hypersurface of even dimension, we have 
$H_{\deR}^{\even}(U) = H_{\deR}^0(U) = \C$.
\end{proof}

\subsection{Spanning set for $HN^{\deR}_{2m}(Q[t, t^{-1}], ft)$}

Fix $m \in \Z$. We next exhibit an explicit spanning set for $HN^{\deR}_{2m}(Q[t, t^{-1}], ft)$ as a complex vector space. This is the content of Lemma~\ref{lem:psi}, which plays a key role in the identification of the ``polar filtration" on $HP_1(U)$ and the nc Hodge filtration on $HP_0(\D^{\on{sg}}(R))$ (Lemma~\ref{lem727}). If $j \leq \tfrac{n+2}{2} - m$, then
given $\o \in [\Omega^{n+2}_Q]_{j \cdot \deg(f)}$, we define 
\begin{equation} \label{E211}
  \psi_{m,j}(\o) \coloneqq   \frac{(-1)^j}{j!} d(\e_Q(\o) t^j) u^{\tfrac{n+2}{2} -m-j} \in HN^{\deR}(Q[t, t^{-1}], ft),
\end{equation}
where $\e_Q$ is as defined in the proof of Theorem~\ref{thm:homotopy}.
Observe that $\psi_{m,j}(\o)$ has homological degree $2m$ and internal degree $0$. We have
$$
d_Q(\e_Q(\o)) = d_Q(\e_Q(\o)) + \e_Q(d_Q(\o))  =  \deg(\o) \o = j \deg(f) \o,
$$
where the first equality holds since $d_Q(\o) = 0$, and the second follows from the proof of Theorem~\ref{thm:homotopy}. We may therefore equivalently write 
\begin{equation}
\label{eqn:equiv}
\psi_{m,j}(\o) =  \frac{(-1)^j \deg(f)}{(j-1)!} \left( \o t^j  -  \tfrac{\e_Q(\o)}{\deg(f)} t^{j-1} dt \right) u^{\tfrac{n+2}{2} -m- j}.
\end{equation}
Using \eqref{eqn:equiv} along with Euler's formula $\deg(f) \cdot f = \sum_{i = 0}^{n+1} \frac{\partial f}{\partial x_i} x_i$, one checks that $\psi_{m,j}(\o)$ is a cycle, and so it determines a class in 
$HN_{2m}^\deR(Q[t, t^{-1}], ft)$. We  write $\psi_{m}$ for the induced map
\begin{equation} \label{E211b}
\psi_m: \bigoplus\limits_{j \leq \tfrac{n+2}2 - m} \left[\Omega^{n+2}_Q\right]_{j \cdot \deg(f)} \to HN_{2m}^\deR(Q[t, t^{-1}], ft).
\end{equation}
Setting $m = 0$, and replacing $HN$ with $HP$, we obtain  the map
\begin{equation} \label{E211c}
  \psi: \left[\Omega^{n+2}_Q\right]_{\Z \cdot \deg(f)}
\coloneqq \bigoplus\limits_{j \in \Z} \left[\Omega^{n+2}_Q\right]_{j \cdot \deg(f)}
  \to HP_0^\deR(Q[t, t^{-1}], ft)
\end{equation}
given by the same formula: if $\o \in \left[\Omega^{n+2}_Q\right]_{j \cdot \deg(f)}$, then $\psi(\o)$ is the class of $\frac{(-1)^j}{j!} d(\e_Q(\o) t^j) u^{\tfrac{n+2}{2} -j}$.

\begin{rem}
\label{rem:colimit}
The composition of $\psi_m$ with
$$
HN_{2m}^{\deR}(Q[t,t^{-1}], ft) \xra{\can} HP_{2m}^{\deR}(Q[t,t^{-1}], ft) \xra{u^m} HP_0^{\deR}(Q[t,t^{-1}], ft)
$$
coincides with the restriction of $\psi$ to $\bigoplus\limits_{j \leq \tfrac{n+2}2 - m} \left[\Omega^{n+2}_Q\right]_{j \cdot \deg(f)}$. 
\end{rem}

\begin{lem}
\label{lem:psi}
  The map $\psi_m$ in \eqref{E211b} is a surjection for all $m$, and the map $\psi$ in \eqref{E211c} is a surjection. 
\end{lem}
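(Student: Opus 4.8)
The plan is to prove the statement for $\psi_m$ first, by downward induction on $m$, using the short exact sequence \eqref{E27d} of Theorem~\ref{newtheorem}(3), and then to deduce surjectivity of $\psi$ by passing to the colimit that computes $HP_0^{\deR}(Q[t,t^{-1}],ft) \cong HP_0(\mf(f))$.

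The heart of the argument is a computation that I would carry out at the outset. Fix $m$ and $\o \in [\Omega^{n+2}_Q]_{je}$ with $j \le \p - m$. Using the expression \eqref{eqn:equiv} for $\psi_{m,j}(\o)$ together with the identity $d_Q\e_Q(\o) = \deg(\o)\,\o = je\cdot\o$ (valid since $\Omega^{n+2}_Q$ consists of cycles), one computes the image of $\psi_{m,j}(\o)$ under the maps defining $q$: setting $dt = 0$ leaves $\tfrac{(-1)^j e}{(j-1)!}\,\o\, t^j u^{\p-m-j}$; then setting $u = 0$ kills every term with $j < \p - m$, and for $j = \p - m$ produces $\tfrac{(-1)^{\p-m} e}{(\p-m-1)!}\,\o\, t^{\p-m}$, which under the isomorphism of Theorem~\ref{newtheorem}(2) corresponds to $\tfrac{(-1)^{\p-m} e}{(\p-m-1)!}$ times the image $\bar\o$ of $\o$ in $[\Omega_f]_{(\p-m)e}$. (Here $\p - m - 1 \ge 0$ whenever the summand $[\Omega^{n+2}_Q]_{(\p-m)e}$ is nonzero, since $[\Omega^{n+2}_Q]_d = 0 = [\Omega_f]_d$ for $d \le 0$.) Since $e \ge 1$, this shows that $q$ restricted to $\im\psi_m$ surjects onto $[\Omega_f]_{(\p-m)e}$.

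Now for the induction. For $m \gg 0$ we have $HN_{2m}(\mf(f)) = 0$ by Theorem~\ref{newtheorem}(1), so $\psi_m$ is trivially surjective. Assume $\psi_{m+1}$ is surjective. Combining the surjectivity of $q|_{\im\psi_m}$ just established with the short exact sequence \eqref{E27d} gives $HN_{2m}(\mf(f)) = \im\psi_m + u\cdot HN_{2m+2}(\mf(f))$. On the other hand, directly from the defining formula \eqref{E211} one has $u\cdot\psi_{m+1,j}(\o) = \psi_{m,j}(\o)$ for every $j \le \p - (m+1)$, and every such $j$ satisfies $j \le \p - m$, so $\psi_{m,j}$ is defined; hence $u\cdot\im\psi_{m+1} \subseteq \im\psi_m$. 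Using the inductive hypothesis $HN_{2m+2}(\mf(f)) = \im\psi_{m+1}$, we conclude $HN_{2m}(\mf(f)) = \im\psi_m$, completing the induction.

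Finally, since $HP_0(\mf(f)) = HN_*(\mf(f))\otimes_{k[u]}k[u,u^{-1}]$ in degree $0$ is the colimit of $HN_0(\mf(f)) \xra{u} HN_{-2}(\mf(f)) \xra{u} \cdots$, and the transition maps are isomorphisms in indices $\ll 0$ by Theorem~\ref{newtheorem}(1), the composite $HN_{-2N}(\mf(f)) \xra{\can} HP_{-2N}(\mf(f)) \xra{u^{-N}} HP_0(\mf(f))$ is an isomorphism for $N \gg 0$. By Remark~\ref{rem:colimit}, the restriction of $\psi$ to $\bigoplus_{j \le \p + N} [\Omega^{n+2}_Q]_{je}$ equals this isomorphism precomposed with $\psi_{-N}$, which is surjective by the first part; therefore $\psi$ is surjective. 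I expect the only real obstacle to be the bookkeeping of signs and scalars in the computation of $q\circ\psi_m$ above; once that is pinned down, the remaining steps are formal consequences of Theorem~\ref{newtheorem}.
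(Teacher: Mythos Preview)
Your proof is correct and follows essentially the same approach as the paper's own argument: both compute $q\circ\psi_m$ explicitly (obtaining the scalar $\tfrac{(-1)^{\p-m}e}{(\p-m-1)!}$ times $\bar\o$), use descending induction on $m$ via the short exact sequence \eqref{E27d} with base case $HN_{2m}=0$ for $m\gg 0$, and then invoke Remark~\ref{rem:colimit} together with Theorem~\ref{newtheorem}(1) to deduce surjectivity of $\psi$. The paper packages the induction step as a commutative diagram with exact columns, whereas you phrase it as $HN_{2m}=\im\psi_m + u\cdot HN_{2m+2}$ and $u\cdot\im\psi_{m+1}\subseteq\im\psi_m$; these are the same argument.
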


\begin{proof}
To ease notation, we write $HN_*$ for $HN_*^{\deR}(Q[t, t^{-1}], ft)$. Let $e = \deg(f)$ and $p = \tfrac{n+2}{2}$, and consider the diagram
$$
\begin{tikzcd}
  0 \arrow[d] & 0 \arrow[d] \\
  \bigoplus\limits_{0 \leq j \leq p - m-1} \left[\Omega^{n+2}_Q\right]_{j \cdot e} \arrow[d,"\can"] \arrow[r,"\psi_{m+1}"] &
  HN_{2m+2} \arrow[d, "u \cdot"] \\
   \bigoplus\limits_{0 \leq j \leq p - m}  \left[\Omega^{n+2}_Q\right]_{j \cdot e}  \arrow[d, "\can"] \arrow[r,"\psi_{m}"] &
   HN_{2m} \arrow[d, "q"]   \\
\left[\Omega^{n+2}_Q\right]_{\left(p  - m\right) \cdot e}  \arrow[d]  \arrow[r,twoheadrightarrow, "\gamma"] &
 \left[\Omega_f\right]_{\left(p-m\right) \cdot e}, \arrow[d] \\
0 & 0 \\
\end{tikzcd}
$$
where the right column is given by Theorem~\ref{newtheorem}(2), and the map $\gamma$ will be defined shortly. 
The formula for $\psi_m$ and the description of $q$ in Theorem~\ref{newtheorem} imply that
$$
q(\psi_m(\o)) =
\begin{cases}
  \frac{(-1)^{p-m}\deg(f)}{(p-m-1)!} \overline{\o}, &  \text{if $|\o| = (p-m)\deg(f)$; and} \\
  0, &  \text{otherwise.} \\
\end{cases}
$$
Setting $\gamma(\o) =   \frac{(-1)^{p-m}\deg(f)}{(p-m-1)!} \overline{\o}$ thus
makes the diagram commute, and it is clear that $\gamma$ is a surjection. 
Since $HN_{2m} = 0$ for $m \gg 0$ by Theorem \ref{newtheorem}(1), it follows by descending induction that 
$\psi_m$ is surjective for all $m$. It follows from Remark~\ref{rem:colimit} that the map $\psi$ is therefore also surjective.
\end{proof}

\subsection{Relating filtrations}

The goal of this section is to relate the ``polar filtration'' on $HP_1(U)$ (defined below) with the nc Hodge filtration on $HP_0(\Dsingdg(R))$; this leads quickly to a proof that the isomorphism $\alpha$ preserves filtrations. Recall that $X = \Proj(Q/f) \subseteq \PP^{n+1}$, and $U$ denotes the affine variety $\PP^{n+1} \setminus X$. As above, we also set $V = \Spec(Q[1/f])$. The map $p \co V \to U$ given by modding out by the $\C^*$-action on $V$ induces a map $\Omega^\bullet_{U} \to \Omega^\bullet_{V}$ via pullback. As explained in \cite[Chapter 6, Section 1]{dimca92}, $p^*$ induces a chain isomorphism from $\Omega^\bullet_{U}$ to a subcomplex of $\Omega^\bullet_{V}$; in more detail, we have:
\begin{equation}
\label{eqn:UV}
\Omega^j_{U} \xra{\cong} \{\frac{\e_Q(\a)}{f^s} \in \Omega^j_{V} \text{ : } s \ge 0 \text{, } \a \in [\Omega^{j+1}_Q]_{s \deg(f)} \}
\end{equation}
    for all $j$, where $\e_Q$ is as defined in the proof of Theorem~\ref{thm:homotopy}. Given $\o \in \Omega^j_U$, we let $\on{ord}(\o)$ denote the minimum $s$ such that there is a representation of $\o$ of the form $\frac{\e_Q(\a)}{f^s}$ as above. 

\begin{defn}[\cite{dimca92} Chapter 6, Definition 1.28]
The \emph{polar filtration} on $\Omega^{\bullet}_U$ is given by
$$
P^s \Omega^i_U \ce
\begin{cases}
\{ \o \in \Omega^i_U \text{ : } \on{ord}(\o) \le i-s + 1\}, & i - s + 1 \ge 0; \\
0, & i - s + 1 < 0.
\end{cases}
$$ The polar filtration induces a filtration $P^\bullet HP_*^{\deR}(U)$ on homology in the evident way.
\end{defn}
  
    Let $\phi$ denote the composition
  $$
  \left[\Omega^{n+2}_Q\right]_{\Z \cdot \deg(f)}  \to HP^{\deR}_1(V) \to HP^{\deR}_1(U)
  $$
  given by sending $\o \in \left[\Omega^{n+2}_Q\right]_{j \cdot \deg(f)}$ to $\frac{\e_Q(\o)}{f^j} u^{\novertwo} \in HP^{\deR}_1(V)$ and then applying the isomorphism~\eqref{eqn:UV}.
Define
$$
\beta: HP^{\deR}_1(U) \to HP_0^\deR(\Db(R))
$$
to be the composition
$$
HP^\deR_1(U) \xra{p^*} HP^\deR_1(V) \xra{\del^\deR_{V,R}}  HP_0^\deR(Q[t], ft)
\xra{\can}  HP_0^\deR(Q[t, t^{-1}], ft),
$$
where $\del_{V,R}^\deR$ is the de Rham version of the boundary map $\del_{V, R}$ in the d\'evissage long exact sequence \eqref{eqn:devVR}, and $\on{can}$ is the canonical map.

\begin{lem} \label{lem727}
The diagram 
$$    
\begin{tikzcd}
  &   \left[\Omega^{n+2}_Q\right]_{\Z \cdot \deg(f)} \arrow[dl, twoheadrightarrow, "\phi"'] \arrow[dr, twoheadrightarrow, "\psi"]  \\
  HP^\deR_1(U) \arrow[rr, "\cong"', "\beta"] && HP_0^\deR(Q[t, t^{-1}], ft)
\end{tikzcd}
 $$   
commutes, $\phi$ and $\psi$ are surjective, and $\beta$ is an isomorphism. 
Moreover, $\beta$ induces an isomorphism 
$$
 P^{s + \frac{n}{2} + 1} HP_1^{\deR}(U) \xra{\cong} F_{\on{nc}}^{s} HP_0^{\deR}(Q[t, t^{-1}], ft).
$$
\end{lem}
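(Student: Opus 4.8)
The plan is to reduce the whole statement to three ingredients that are already in hand: first, that $\beta$ coincides with the composite \eqref{E522c} occurring in the proof of Proposition~\ref{prop17a}, which is shown there to be an isomorphism, so $\beta$ is an isomorphism; second, that $\psi$ is surjective by Lemma~\ref{lem:psi}; and third, that the triangle commutes, i.e. $\beta\circ\phi=\psi$. Granting the third point, $\phi=\beta^{-1}\circ\psi$ is automatically surjective, so the only real work is the commutativity and the identification of the two filtrations.

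For commutativity, I would fix $\o\in[\Omega^{n+2}_Q]_{j\cdot\deg(f)}$ and chase definitions. Since \eqref{eqn:UV} is induced by $p^{*}$, the class $p^{*}\phi(\o)\in HP^{\deR}_1(V)$ is represented by $\tfrac{\e_Q(\o)}{f^{j}}u^{\novertwo}$, where $\e_Q(\o)\in\Omega^{n+1}_Q$ lies in \emph{odd} homological degree $n+1$. The Euler identity $\deg(f)\,f=\sum_i x_i\,\partial f/\partial x_i$ gives $df\wedge\e_Q(\o)=\deg(f)\,f\,\o$, and together with $d\o=0$ (so $d\e_Q(\o)=\deg(\o)\,\o=j\deg(f)\,\o$, by the Cartan-type relation used in the proof of Theorem~\ref{thm:homotopy}) this yields $f\,d(\e_Q(\o))=j\,(df\wedge\e_Q(\o))$ — exactly the hypothesis of \cite[Lemma 5.3]{devissage} needed to apply Theorem~\ref{deRhamModels}. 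That theorem, with $\alpha=\e_Q(\o)$, $p=n+1$, $s=j$, then gives
$$\del^{\deR}_{V,R}\left(\frac{\e_Q(\o)}{f^{j}}u^{\novertwo}\right)=\frac{(-1)^{j}}{j!}\,d\left(\e_Q(\o)\,t^{j}\right)u^{\frac{n+2}{2}-j},$$
and after the canonical map to $HP^{\deR}_0(Q[t,t^{-1}],ft)$ this is precisely $\psi(\o)$ as defined in \eqref{E211c}. (The same identity is also what makes $\phi$ and $\psi$ well defined.)

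For the filtration statement I would identify both sides with the image of the subspace $S_s\coloneqq\bigoplus_{j\le\frac{n+2}{2}-s}[\Omega^{n+2}_Q]_{j\cdot\deg(f)}$. On the $HP_0$ side, Lemma~\ref{lem:psi} gives that $\psi_s$ is onto $HN^{\deR}_{2s}(Q[t,t^{-1}],ft)$, while Remark~\ref{rem:colimit} identifies $u^{s}\circ\can\circ\psi_s$ with the restriction of $\psi$ to $S_s$; hence $F^{s}_{\on{nc}}HP^{\deR}_0(Q[t,t^{-1}],ft)=\psi(S_s)$. On the $HP_1(U)$ side, since $\dim U=n+1$ the complex $\Omega^\bu_U$ is concentrated in degrees $\le n+1$, so every form in $\Omega^{n+1}_U$ is a cocycle and $HP^{\deR}_1(U)\cong H^{n+1}_{\deR}(U)$ is the quotient of $\Omega^{n+1}_U$ by exact forms. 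Under \eqref{eqn:UV}, $\Omega^{n+1}_U$ is the set of forms $\tfrac{\e_Q(\alpha)}{f^{s'}}$ with $\alpha\in[\Omega^{n+2}_Q]_{s'\cdot\deg(f)}$, each of order $\le s'$; unwinding the definition of the polar filtration, $P^{s+\frac{n}{2}+1}\Omega^{n+1}_U$ is exactly the span of those with $s'\le\frac{n+2}{2}-s$, and since the class of $\tfrac{\e_Q(\alpha)}{f^{s'}}$ is $\phi(\alpha)$ this gives $P^{s+\frac{n}{2}+1}HP^{\deR}_1(U)=\phi(S_s)$. Applying the isomorphism $\beta$ and using $\beta\circ\phi=\psi$ then shows $\beta(P^{s+\frac{n}{2}+1}HP^{\deR}_1(U))=F^{s}_{\on{nc}}HP^{\deR}_0(Q[t,t^{-1}],ft)$, and injectivity of $\beta$ upgrades this to the claimed isomorphism of subspaces.

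The step I expect to be the main obstacle is this last one: one must be careful that the \emph{induced} polar filtration on $H^{n+1}_{\deR}(U)$ is, by definition, the image of the cocycles of bounded order, and track the index shift $s\mapsto s+\frac{n}{2}+1$ against the relation between order and cohomological degree in the definition of $P^\bullet$. What makes it go through cleanly is precisely that in top degree every form on the affine variety $U$ is closed, so $H^{n+1}_{\deR}(U)$ is an honest quotient of $\Omega^{n+1}_U$ and \eqref{eqn:UV} provides an explicit, order-compatible handle on it; everything else is the formula of Theorem~\ref{deRhamModels} together with the surjectivity in Lemma~\ref{lem:psi}.
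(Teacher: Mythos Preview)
Your proposal is correct and follows essentially the same route as the paper's proof: commutativity via Theorem~\ref{deRhamModels}, the isomorphism $\beta$ via Proposition~\ref{prop17a}, surjectivity of $\psi$ via Lemma~\ref{lem:psi}, and the filtration statement by lifting classes to $[\Omega^{n+2}_Q]_{\Z\cdot\deg(f)}$ and tracking the pole-order/$u$-degree bookkeeping. Your framing of the filtration step as identifying both sides with the image of the common subspace $S_s$ is a slightly cleaner packaging of what the paper does element-by-element (and it makes the ``similar argument'' for $\beta^{-1}$ explicit), but the underlying argument is the same.
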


\begin{rem}
It follows from a Theorem of Griffiths \cite[(8.6)]{griffiths} (see also \cite[Chapter 6, Section 1]{dimca92}) that $\phi$ is surjective; our argument in the proof of Lemma~\ref{lem727} gives a new proof of this fact.
\end{rem}

\begin{proof} 
  Let $\o \in \Omega^{n+1}_Q$, where $|\o| = j \cdot \deg(f)$. We have
  $$
(\beta \circ \phi)(\omega) = (\on{can} \circ \del^{\deR}_{V,R})(\frac{\e(\o)}{f^j} u^{\novertwo}) = \frac{(-1)^{j}}{j!} d(\e(\o)t^j) u^{\tfrac{n}{2} + 1 -j} = \psi(\o),
  $$
where the first and third equalities follow immediately from the definitions of $\phi$ and $\psi$, and the second is a consequence of Theorem \ref{deRhamModels}. Thus, the diagram commutes. It follows from Lemma~\ref{lem25a} and Proposition~\ref{prop17a} that $\beta$ is an isomorphism. The map $\psi$ is surjective by Lemma~\ref{lem:psi}, and so $\phi$ is surjective as well. Finally, suppose $y \in P^{s + \frac{n}{2} + 1} HP_1^{\deR}(U)$, so that $\on{ord}(y) \le \frac{n}{2} + 1 - s$. Choose $\o \in \left[\Omega^{n+2}_Q\right]_{\on{ord}(y) \cdot \deg(f)}$ such that $\phi(\o) = y$. We have:
$$
\beta(y) = \psi(\o) \in F_{\on{nc}}^{\frac{n}{2} + 1 - \on{ord}(y)} (HP_0^{\deR}(Q[t, t^{-1}], ft) \subseteq F_{\on{nc}}^{s} HP_0^{\deR}(Q[t, t^{-1}], ft).
$$
This shows that $\beta$ maps $P^{s + \frac{n}{2} + 1} HP_1^{\deR}(U)$ to $F_{\on{nc}}^{s} HP_0^{\deR}(Q[t, t^{-1}], ft)$, and a similar argument shows that $\beta^{-1}$ maps $F_{\on{nc}}^{s} HP_0^{\deR}(Q[t, t^{-1}], ft)$ to $P^{s + \frac{n}{2} + 1} HP_1^{\deR}(U)$.
\end{proof}

\subsection{Completion of the proof}

\begin{proof}[Proof of Theorem \ref{rephrase}]
The first two conditions in Properties~\ref{assumptions} follow from Part (3) of the theorem, and the third is a consequence of \cite[Theorem B]{khan} (see also \cite[Theorem 1.4]{devissage}). We proved (2) in Subsection~\ref{sec:(2)}; it therefore remains to prove (3). By Proposition~\ref{prop17a}, the map $\a$ is an isomorphism. The commutativity of \eqref{thediagram} and the surjectivity of \eqref{Ktopcomposition} imply that
$\alpha$ identifies rational structures, so we need only show that $\alpha$ induces an isomorphism
$$
F^{s} H^n_{\on{prim}}(X; \C(\frac{n}{2})) \xra{\cong} F^s_{\on{nc}} HP_0(\Dsingdg(R))
$$
 for all $s \in \Z$. It is a Theorem of Griffiths~\cite[(8.6)]{griffiths} (see also \cite[Chapter 6, Section 1]{dimca92}) that there is an isomorphism 
 $$
P^{s} H^{i}(U; \C) \cong F^s H^i(U ; \C)
$$
for all $i, s \in \Z$. We therefore have a chain of isomorphisms
$$
F^{s} H^n_{\on{prim}}(X; \C(\frac{n}{2})) \xla{\cong} F^{s} H^{n+1}(U ; \C(\frac{n}{2} + 1)) \xra{\cong} P^{s } H^{n+1}(U; \C(\frac{n}{2} + 1)) \xra{\cong} P^{s + \frac{n}{2} + 1}HP^{\deR}_1(U);
$$
the first is the boundary map in the evident long exact sequence, and the third is the identification of singular and de Rham cohomology. Applying Lemma~\ref{lem727} therefore finishes the proof. 
\end{proof}

\section{Examples}
\label{examples}

Let $R = \C[x_0, \dots, x_{n+1}]/(f)$, where $f$ is a homogeneous polynomial such that the projective hypersurface $X = \Proj(R) \subseteq \PP^{n+1}$ is smooth, and assume $n$ is even. By Theorem~\ref{rephrase}(3), the dg-category $\Dsingdg(R)$ satisfies the nc Hodge condition if and only if the Hodge Conjecture holds for $X$. In this section, we study the Hodge classes in $HP_0(\Dsingdg(R))$ in several cases in which the Hodge conjecture holds for $X$. Let us start with the simplest example:

\begin{ex}[The $n = 0$ case] In this case, $X$ is a collection of points, and so the Hodge Conjecture clearly holds for $X$.
The complexified Chern character map $K_0(X)_\C \to H^{\on{even}}(X ; \C)$ is surjective, and so the same is true of $\ch_{HP} : K_0(\Dsingdg(R))_\C \to HP_0(\Dsingdg(R))$; in other words, $HP_0(\Dsingdg(R))$ is spanned by Hodge classes. Write $f = \ell_1 \cdots \ell_d$, where each $\ell_i$ is homogeneous of degree 1, and let $M_i$ denote the $R$-module $\C[x_0, x_1] / (\ell_i)$. It is not hard to check that $K_0(\Dsingdg(R))$ is generated by $[M_1], \dots, [M_d]$ modulo the relation $\sum_{i = 1}^d [M_i] = 0$. It follows that $HP_0(\Dsingdg(R))$ is generated by $\ch_{HP}[M_1], \dots, \ch_{HP}[M_d]$ modulo the analogous relation. 
\end{ex}

Before we consider more complicated examples, we must discuss some background on Chern characters of matrix factorizations. 
\subsection{Chern characters of matrix factorizations}

It follows from the calculations in \cite[Example 6.1]{BW} that the Chern character map
$$
\ch_{HP} \co K_0(\mf(f)) \to HP_0(\mf(f)) \cong HP_0^{\dR}(Q[t, t^{-1}], tf)
$$
sends a class of the form\footnote{We recall that $K_0(\mf(f))$ is, by definition, the free abelian group generated by isomorphism classes of objects in the \emph{idempotent completion} of $\mf(f)$ modulo relations arising from exact triangles. In particular, not every class in $K_0(\mf(f))$ is necessarily of the form $[(A, B)]$, where $(A, B)$ is a matrix factorization of $f$.} $[(A, B)] \in K_0(\mf(f))$ to the class 
$$
\frac{2t^{\frac{n+2}{2}}}{(n+2)!}\on{tr}((dAdB)^{\frac{n+2}{2}} ) \in HP_0^{\dR}(Q[t, t^{-1}], tf),
$$
where $dA$ and $dB$ denote the square matrices with entries in $\Omega^1_{Q}$ obtained by applying the de Rham differential $d$ to the entries of $A$ and $B$. We note that \cite[Example 6.1]{BW} concerns Chern characters taking values in negative cyclic homology relative to $k[t,t^{-1}]$ rather than $k$, but the exact same calculations yield the above formula in our setting. 

The Chern character map is compatible with tensor products of matrix factorizations; let us explain what we mean by this. Suppose $A$ and $A'$ are $k$-algebras, $g \in A$, and $g' \in A'$. If $F \in \mf(g)$, and $F' \in \mf(g')$, then we may form the tensor product $F \otimes_k F' \in \mf(f \otimes 1 + 1 \otimes f')$; see e.g. \cite{yoshinotensor} for details. Assume now that $A = k[y_0, \dots, y_{m+1}]$ and $A' = k[y_0', \dots, y'_{m'+1}]$, $g$ and $g'$ are homogeneous, and the hypersurfaces $A/(g)$ and $A'/(g')$ both have isolated singularities. In this case, we identify $g \otimes 1 + 1 \otimes g'$ with $g + g' \in k[y_0, \dots, y_{m+1}, y'_0, \dots, y'_{m'+1}]$. The tensor product functor induces a map
$$
HP_0^{\dR}(\mf(g)) \otimes_k HP^{\dR}_0(\mf(g')) \to HP^{\dR}_0(\mf(g + g'))
$$
given by multiplication, which we denote by $\gamma \otimes \gamma' \mapsto \gamma \cdot \gamma'$. A straightforward calculation shows that $\ch_{HP}(F) \cdot \ch_{HP}(F') = \ch_{HP}(F \otimes F')$. 

\subsection{Hodge classes of Fermat hypersurfaces}
Assume now that $n \ge 2$, and suppose $f = x_0^m + \cdots + x_{n+1}^m$, so that $X$ is a Fermat hypersurface. For the remainder of this section, we will write $X$ as $X^n_m$. It follows from work of Shioda~\cite{shioda} that the Hodge classes in $H^n_{\prim}(X ; \C)$ can be explicitly described in the following way. Let $\mu_m$ denote the group of $m^{\th}$ roots of unity and $G$ the quotient of $\mu_m^{n+2}$ by the diagonal subgroup. Let $\widehat{G} = \Hom(G, \C^*)$; we identify $\widehat{G}$ with the group
$$
\{(a_0, \dots, a_{n+1}) \in (\Z/m)^{n+2} \text{ : } \sum_{i = 0}^{n+1} a_i = 0\}
$$
via the isomorphism described in~\cite[Section 1]{shioda}. We fix the following notation:
\begin{itemize}
\item $U = \{(a_0, \dots, a_{n+1}) \in \widehat{G} \text{ : } a_i \ne 0 \text{ for all } i \}$.
\item For $a \in \Z/m$, we let $\langle a \rangle$ denote the unique representative of $a$ between 0 and $m - 1$.
\item For $\a = (a_0, \dots, a_{n+1}) \in U^n_m$, we set $|\a| = \sum_{i = 0}^{n+1} \frac{\langle a_i \rangle}{m}$.
\item $B = \{\a \in U \text{ : } |t\a| = \frac{n}{2} + 1 \text{ for all } t \in (\Z/m)^\times\}$.
\end{itemize}
The group $G$ acts on $X^n_m$ by scaling the variables, and so $G$ acts on $H^n_{\prim}(X ; \C)$ as well. Given $\a \in \widehat{G}$, define
$$
V(\a) = \{ \xi \in H^n_{\prim}(X^n_m) \text{ : } g^*(\xi) = \a(g)\xi \text{ for all } g \in G\}.
$$
The following calculation of the Hodge classes of $X^n_m$ is due to Shioda:
\begin{thm}[\cite{shioda} Theorem I]
Given $\a \in \widehat{G}$, we have $\dim_\C V(\a) = 0$ or $1$, and $\dim_\C V(\a) = 1$ if and only if $\a \in U$. Moreover, the complexified 
Hodge classes of $X^n_m$ may be described as follows: 
$$
\Hdg(X^n_m) \otimes_\Q \C = \bigoplus_{\a \in B} V(\a).
$$
In particular, $\dim_\Q \Hdg(X^n_m) = |B|$. 
\end{thm}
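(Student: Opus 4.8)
\emph{Proof proposal.} The assertion is Shioda's Theorem~I \cite{shioda}, and the plan is to reproduce its proof, which combines the $G$-equivariant Hodge theory of the Fermat hypersurface with the description of rational cohomology as the Galois-invariant part of complex cohomology. The first step is to record the character decomposition $H^n_{\prim}(X^n_m;\C) = \bigoplus_{\a \in \widehat G} V(\a)$, which is automatic since $G$ is a finite abelian group acting $\C$-linearly; because $G$ acts by automorphisms of $X^n_m$, its action preserves the Hodge filtration, so each $V(\a)$ meets the Hodge filtration appropriately and in particular is contained in a single Hodge summand once we know it is one-dimensional.

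The first technical input is that $\dim_\C V(\a) = 1$ if $\a \in U$ and $0$ otherwise, together with the Hodge type of $V(\a)$. I would deduce this from Griffiths' Jacobian-ring description of the primitive cohomology of a smooth hypersurface \cite{griffiths} --- equivalently, in the language of this paper, from the identification $F^s_{\mathrm{nc}}/F^{s+1}_{\mathrm{nc}} \cong [\Omega_f]_{(\frac n2 + 1 - s)\deg f}$ of Theorem~\ref{newtheorem}(2) together with $HP_0(\mf(f)) \cong H^n_{\prim}(X^n_m;\C)$ of Theorem~\ref{rephrase}. For $f = x_0^m + \cdots + x_{n+1}^m$ one has $\partial_i f = m x_i^{m-1}$, so the Milnor algebra is $\C[x_0,\dots,x_{n+1}]/(x_0^{m-1},\dots,x_{n+1}^{m-1}) = \bigotimes_i \C[x_i]/(x_i^{m-1})$, with monomial basis $\{x^a : 0 \le a_i \le m-2\}$; accordingly $\Omega_f$ has basis $\{x^a\,dx_0\cdots dx_{n+1}\}$ over the same range. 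The grading-operator $\Gamma$ splits each graded piece of $\Omega_f$ into lines spanned by these monomial forms, and under $G$ the form $x^a\,dx_0\cdots dx_{n+1}$ transforms by the character $\a = (a_0+1,\dots,a_{n+1}+1) \bmod m$. Since $a_i + 1$ runs over $\{1,\dots,m-1\}$, the characters occurring are exactly those in $U$, each exactly once; tracking internal degrees shows that the line attached to $\a$ sits in the graded piece indexed by $j = |\a|$ (here $|\a| \in \Z$, as $\sum a_i \equiv 0 \bmod m$), hence $V(\a)$ is pure of Hodge type $(n+1-|\a|,\ |\a|-1)$. In particular $V(\a)$ consists of Hodge classes of type $(\tfrac n2, \tfrac n2)$ precisely when $|\a| = \tfrac n2 + 1$. (Concretely, the de Rham representative $\psi(x^a\,dx_0\cdots dx_{n+1})$ of \eqref{E211c} lies in $F^{\frac n2+1-|\a|}_{\mathrm{nc}}$ by construction of $\psi$, so Lemma~\ref{lem727} and Theorem~\ref{rephrase}(3) read off the Hodge type directly.)

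The second input is the rational structure. The $\C$-linear $G$-action and the eigenvectors $v_\a$ spanning $V(\a)$ are defined over $\Q(\zeta_m)$, and $\gamma \in \mathrm{Gal}(\Q(\zeta_m)/\Q) \cong (\Z/m)^\times$ acts on $H^n_{\prim}(X^n_m;\C) = H^n_{\prim}(X^n_m;\Q)\otimes_\Q\C$ through the second tensor factor, carrying $V(\a)$ isomorphically onto $V(t\a)$ where $t$ represents $\gamma$, since $g^*(\gamma v_\a) = \gamma(\a(g))\,\gamma v_\a = (t\a)(g)\,\gamma v_\a$. Hence $H^n_{\prim}(X^n_m;\Q) = \bigl(\bigoplus_\a V(\a)\bigr)^{(\Z/m)^\times}$. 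A vector in $\Hdg(X^n_m)\otimes_\Q\C = H^n_{\prim}(X^n_m;\Q)\otimes\C \cap H^{n/2,n/2}$ is thus $(\Z/m)^\times$-invariant with support contained in $\{\a : |\a| = \tfrac n2+1\}$; invariance forces the support to be a union of $(\Z/m)^\times$-orbits, so it lies in $B = \{\a\in U : |t\a| = \tfrac n2+1 \text{ for all } t\in(\Z/m)^\times\}$, and conversely every $(\Z/m)^\times$-invariant vector supported on $B$ is rational of type $(\tfrac n2,\tfrac n2)$. Therefore $\Hdg(X^n_m)\otimes_\Q\C = \bigoplus_{\a\in B} V(\a)$. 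Finally, $B$ is a union of Galois orbits, and an orbit of size $d$ contributes a $\Q$-subspace of dimension $d$ whose complexification is the sum of the $d$ corresponding lines $V(\a)$; summing over orbits gives $\dim_\Q \Hdg(X^n_m) = \sum_{\a\in B}\dim_\C V(\a) = |B|$.

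The main obstacle is the grading-and-character bookkeeping in the Jacobian-ring step: one must pin down precisely which Hodge type the line $V(\a)$ occupies as a function of $|\a|$, so that ``$|\a| = \tfrac n2+1$'' is verified to be exactly the condition for $V(\a)$ to lie in $H^{n/2,n/2}$, and ``$|t\a| = \tfrac n2+1$ for all $t$'' to be exactly the rationality (Galois-invariance) condition. Everything else is formal given the eigenspace decomposition, Griffiths' theorem, and the Galois action; in the present paper the de Rham model $\psi$ of Section~\ref{mainsection} and the polar-filtration comparison of Lemma~\ref{lem727} make this bookkeeping essentially automatic.
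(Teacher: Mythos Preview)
The paper does not prove this statement: it is quoted verbatim from Shioda \cite{shioda} as background for the examples in Section~\ref{examples}, with no argument supplied. There is therefore no proof in the paper to compare your proposal against. What you have written is essentially a reconstruction of Shioda's original argument, recast in the Jacobian-ring and de~Rham language developed in Sections~\ref{sec:tech}--\ref{mainsection} of the present paper; the use of Theorem~\ref{newtheorem}(2), the map $\psi$, and Lemma~\ref{lem727} to read off the Hodge type of each $V(\alpha)$ is a pleasant repackaging, but the substance is Shioda's.

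One passage in your sketch is imprecise and, as written, false. You assert that ``$\Hdg(X^n_m)\otimes_\Q\C = H^n_{\prim}(X^n_m;\Q)\otimes\C \cap H^{n/2,n/2}$'' and that a vector in $\Hdg(X^n_m)\otimes_\Q\C$ is $(\Z/m)^\times$-invariant. The first equality is wrong: $H^n_{\prim}(X^n_m;\Q)\otimes_\Q\C$ is all of $H^n_{\prim}(X^n_m;\C)$, so the right-hand side is just $H^{n/2,n/2}$. The second claim is also wrong: after tensoring with $\C$ the Galois action is nontrivial, so $\Hdg(X^n_m)\otimes_\Q\C$ is Galois-\emph{stable}, not pointwise fixed. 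The argument you want is: a class $v \in \Hdg(X^n_m) \subseteq H^n_{\prim}(X^n_m;\Q(\zeta_m))$ is Galois-fixed and lies in $H^{n/2,n/2}$; its $V(\alpha)$-components therefore vanish unless $|\alpha| = \tfrac n2+1$, and Galois invariance (which permutes the $V(\alpha)$ via $\alpha \mapsto t\alpha$) forces $|t\alpha| = \tfrac n2+1$ for every $t \in (\Z/m)^\times$ whenever the $\alpha$-component is nonzero, so the support of $v$ lies in $B$. The converse and the dimension count then go through as you wrote. With this repair your outline is a correct sketch of Shioda's proof.
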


Shioda applies this result in \cite{shioda} to confirm a family of cases of the Hodge Conjecture for Fermat hypersurfaces; see also the recent work of da~Silva~\cite{dasilva}.  

\begin{ex}
\label{ex:quadratic}
If $m = 2$, then $B = \{(1, \dots, 1)\}$, and so $\dim_\Q \Hdg(X^n_2) = 1$.  On the other hand, by Kn\"orrer periodicity, the dg-category $\Dsingdg(R)$ has exactly one indecomposable object up to homotopy equivalence, namely the tensor product of the matrix factorization $(x + iy, x-iy)$ with itself $\frac{n+2}{2}$ times. The Chern character of this matrix factorization is $(-2i)^{\frac{n+2}{2}}dx_0 \cdots dx_{n+1}$, and so this class gives a basis for $\Hdg(\Dsingdg(R))$.   
\end{ex}

\begin{ex}
\label{ex:cubic}
Now suppose $m = 3$ and $n = 2$, so that $f = x_0^3 + x_1^3 + x_2^3 + x_3^3$. In this case, we have
$$
B = \{(1,1,2,2), (1,2,1,2), (1,2,2,1), (2, 1,1,2), (2, 1, 2, 1), (2, 2, 1,1)\},
$$
and so $\dim_\Q \Hdg(X^2_3) = 6$.  As discussed in the introduction, the Hodge Conjecture is known to hold for all surfaces, and so $\Dsingdg(R)$ satisfies the nc Hodge condition. It follows that there are six classes in $K_0(\Dsingdg(R))_\Q$ whose Chern characters form a basis of $\Hdg(X^2_3)$: let us now describe these classes in terms of matrix factorizations of $x_0^3 + x_1^3 + x_2^3 + x_3^3$. 

Let $\a = e^{2\pi i / 3}$. Taking tensor products of the two matrix factorizations
\begin{align*}
E_1(x_0, x_1) &= \left(x_0 + x_1, (x_0 + \a x_1)(x_0 + \a^2 x_1)\right) \\
E_2(x_0, x_1) &= \left(x_0 + \a x_1, (x_0 + x_1)(x_0 + \a^2 x_1)\right)
\end{align*}
of $x_0^3 + x_1^3$ yields the following six matrix factorizations of $x_0^3 + x_1^3 + x_2^3 + x_3^3$: 
$$
E_1(x_0, x_1) \otimes E_1(x_2, x_3), \quad E_2(x_0, x_1) \otimes E_1(x_2, x_3), \quad E_1(x_0, x_1) \otimes E_2(x_2, x_3),
$$
$$
E_2(x_0, x_1) \otimes E_2(x_2, x_3), \quad E_1(x_0, x_2) \otimes E_1(x_1, x_3), \quad E_1(x_0,x_2) \otimes E_2(x_1, x_3).
$$
Let us compute the Chern characters of these objects. We have:
\begin{align*}
\ch_{HP}(E_1(x_0, x_1)) & = 3(x_1 - x_0) dx_0 dx_1.\\
\ch_{HP}(E_2(x_0, x_1)) & = 3\a(\a x_1 - x_0 ) dx_0 dx_1.
\end{align*}
Thus, letting $\omega \ce dx_0dx_1dx_2dx_3$, the Chern characters of our six matrix factorizations of $x_0^3 + x_1^3 + x_2^3 + x_3^3$ are:
$$
9(x_1 - x_0)(x_3 - x_2)\omega, \quad 9\a(\a x_1 - x_0)(x_3 - x_2)\omega, \quad 9\a(x_1 - x_0)(\a x_3 - x_2)\omega,
$$
$$
9\a^2(\a x_1 - x_0)(\a x_3 - x_2)\omega, \quad 9(x_2 - x_0)(x_3 - x_1)\omega, \quad 9 \a (x_2 - x_0)(\a x_3 - x_1)\omega.
$$
A straightforward calculation shows that these classes are $\Q$-linearly independent and therefore form a basis of $\Hdg(\Dsingdg(R))$. 
\end{ex}

Example~\ref{ex:cubic} shows that every Hodge class of $\Dsingdg(\C[x_0, \dots, x_3] / (x_0^3 + \cdots + x_3^3))$ can be built out of products of Hodge classes of $\Dsingdg(\C[x_0, x_1] / (x_0^3 + x_1^3))$. The next example shows that this isn't always the case for Fermat hypersurfaces, even in four variables.

\begin{ex}
We now take $m = 6$ and $n = 2$. We have $(2, 2, 3, 5) \in B$ in this case. Notice that $(2, 2, 3, 5)$ is not the concatenation of elements of $(\Z/6)^2$ corresponding to Hodge classes of $X^0_6$. This implies that the Hodge class corresponding to $(2, 2, 3, 5)$ cannot arise as the product of Hodge classes of $\Dsingdg(\C[x_0, x_1] / (x_0^3 + x_1^3))$. Indeed, we do not know how to explicitly express this Hodge class as a $\C$-linear combination of Chern characters of matrix factorizations, even though, since the Hodge conjecture holds for $X^2_6$, this must be possible.
\end{ex}

\bibliographystyle{amsalpha}
\bibliography{Bibliography}
\end{document}